\documentclass[12pt]{article}
\usepackage{IEEEtrantools}
\usepackage{mathtools, nccmath}
\usepackage{latexsym}
\usepackage{amsfonts}
\usepackage{amssymb}
\usepackage{eqnarray, amsmath,amsfonts,amsthm}
\usepackage{mathrsfs}
\usepackage{enumerate}
\usepackage{dsfont}
\usepackage{hyperref}
\usepackage{graphicx}
\usepackage{epstopdf}
\usepackage{tikz}
\usepackage[T1]{fontenc}

\usetikzlibrary{shapes,arrows.meta,positioning}
\usepackage{cleveref}
\usepackage{tikz-cd}
\usepackage{psfrag}

\usepackage{multirow}
\usepackage{tikz}
\usetikzlibrary{arrows,decorations.markings}
\usepackage{bookmark}
\usepackage{hyperref}
\hypersetup{
     colorlinks   = true,
     citecolor    = blue
}

\newcommand{\newsection}[1]{\setcounter{equation}{0}
\setcounter{dfn}{0}
\section{#1}}

\newtheorem{dfn}{Definition}[section]
\newtheorem{thm}[dfn]{Theorem}
\newtheorem{lmma}[dfn]{Lemma}
\newtheorem{ppsn}[dfn]{Proposition}
\newtheorem{crlre}[dfn]{Corollary}
\newtheorem{xmpl}[dfn]{Example}
\newtheorem{rmrk}[dfn]{Remark}

\newtheorem*{thmA}{Theorem A}
\newtheorem*{thmB}{Theorem B}

\newcommand{\bbc}{\mathbb{C}}
\newcommand{\bbz}{\mathbb{Z}}
\newcommand{\bbn}{\mathbb{N}}

\def \qed { \mbox{}\hfill
$\Box$\vspace{1ex}}

\providecommand{\Dref}[1]{Definition~\ref{#1}}
\providecommand{\Tref}[1]{Theorem~\ref{#1}}
\providecommand{\Lref}[1]{Lemma~\ref{#1}}
\providecommand{\Pref}[1]{Proposition~\ref{#1}}
\providecommand{\Yref}[1]{Corollary~\ref{#1}}
\providecommand{\Sref}[1]{Section~\ref{#1}}
\providecommand{\Eref}[1]{(\ref{#1})} 

\title{Sections and Chapters}
\let\oldthebibliography\thebibliography
\renewcommand{\thebibliography}[1]{
  \oldthebibliography{#1}
  \setlength{\itemsep}{0pt}
  \setlength{\parsep}{0pt}
  \setlength{\parskip}{0pt}
}

\begin{document}

\tikzset{->-/.style={decoration={
  markings,
  mark=at position #1 with {\arrow{>}}},postaction={decorate}}}
  \tikzset{-<-/.style={decoration={
  markings,
  mark=at position #1 with {\arrow{<}}},postaction={decorate}}}

\author{\sc{Vibhor Bhatt,\,\,Satyajit Guin,\,\,Bipul Saurabh}}
\title{Quantum Gromov--Hausdorff Convergence for Extensions of $C^*$-Algebras}
\maketitle


\begin{abstract}
We investigate the lifting of quantum Gromov--Hausdorff convergence through Toeplitz type $C^*$-algebra extensions by stable ideals in the framework of noncommutative metric geometry. Working with the spectral metric space construction of Hawkins and Zacharias (Comm. Math. Phys. 350 (2017), 475–506), we consider a sequence of complete sub-operator systems of the quotient or the unital $C^*$-algebra underlying the stable ideal, converging in the quantum Gromov--Hausdorff distance. We study whether this induces a corresponding convergent sequence of complete sub-operator systems of the extension. To address this problem, we construct complete sub-operator systems of the extension associated with those of the quotient and the unital $C^*$-algebra underlying the stable ideal. We also introduce the notion of unital $2$-contractive approximation together with its Toeplitz type refinement to provide the compatibility required by the commutator structure of the Dirac operator on the extension. We prove that, under this approximation hypothesis on the convergent sequence in the quotient or the unital $C^*$-algebra underlying the stable ideal, quantum Gromov--Hausdorff convergence lifts to the extension.
\end{abstract}

{\bf 2020 Mathematics Subject Classification:} 58B34, 46L87, 46L07
\smallskip

{\bf Keywords.} $C^*$-algebra, Dirac operator, operator system, spectral metric space, quantum Gromov--Hausdorff distance.

\hypersetup{linkcolor=blue}
\tableofcontents


\newsection{Introduction}\label{Sec1}

The central ingredient in Connes' noncommutative geometry \cite{Con} is the notion of a spectral triple. From the point of view of Kasparov theory, spectral triples can be regarded as abstract Dirac-type elliptic operators associated to a $C^*$-algebra $A$: they are Baaj--Julg $(A,\bbc)$-cycles \cite{BJ}, the unbounded Kasparov modules in $KK$-theory \cite{K,B}, and determine $K$-homology classes via the corresponding bounded Kasparov cycles \cite{HR}. When equipped with additional regularity conditions, spectral triples provide a powerful framework for encoding geometric information in a noncommutative setting \cite{CM}. In this paper, we focus on the metric structures induced by spectral triples. Given a spectral triple $(\mathcal{A},\mathcal{H},\mathcal{D})$ over a unital $C^*$-algebra $A$ that is nondegenerate (i.e. $[\mathcal{D},a]=0$ only for $a\in\bbc\,. 1_A$), Connes \cite{C,Con} showed the following formula
\begin{center}
$\mathrm{dist}_{\mathcal{D}}(\phi,\psi):=\sup\{|\phi(a)-\psi(a)|\,:\,a\in\mathcal{A},\,\|[\mathcal{D},a]\|\le 1\}$
\end{center}
defines an extended metric (possibly $\infty$-valued) on the state space $\mathcal{S}(A)$ of the $C^*$-algebra $A$. For the canonical Dirac triple on a closed spin manifold, the restriction of the metric to the point evaluation measures coincides with the geodesic metric.
\smallskip

This metric aspect of spectral triples inspired Rieffel to systematically develop the theory of compact quantum metric spaces (CQMS), a generalization of compact metric spaces, or Lipschitz functions, to order-unit spaces with the quantum Gromov--Hausdorff distance as a central tool for quantifying the proximity between such spaces \cite{Rie1,Rie2,Rie3,Rie4}. We note the existence of several other distances due to Kerr \cite{Kerr,KL}, Li \cite{Li}, and Latr\'emoli\`ere \cite{L1,L2}, but we do not consider them here. Although Rieffel's initial CQMS framework is formulated in the setting of order-unit spaces, it can be recast within the \emph{operator system} framework of Connes and van Suijlekom \cite{CS}, and so can the notion of spectral triples (called operator system spectral triples), as is the recent trend \cite{AKK1,AKK2,CS1,Rie5,Rie6,AKK3,Kaad,AuK,S2}. Every operator system has an underlying order-unit space given by its self-adjoint part. We work in this generalized framework.
\smallskip

To work in this setting precisely, we now fix the relevant objects. Throughout the article, $X$ denotes a (concrete) complete operator system that is a closed subspace in a unital $C^*$-algebra $A$ stable under adjoint operation and containing the unit $1_A$. A complete sub-operator system of a complete operator system $X$ is a closed linear subspace stable under adjoint operation and containing the unit. Every unital $C^*$-algebra $A$ is in particular a complete operator system, so the definitions below specialize to the case of $X=A$. A Lipschitz seminorm on a complete operator system $X$ is a seminorm $L:\mathcal{X}\to\mathbb{R}^+$ defined on a dense unital involutive subspace $\mathcal{X}$ of $X$ such that $L(a^*)=L(a)$ for each $a\in\mathcal{X}$ and $L(1)=0$. A Lipschitz seminorm $L$ is called \emph{nondegenerate} if the set $\{a\in\mathcal{X}:L(a)=0\}$ contains only multiples of identity. A Lipschitz seminorm $L$ on $X$ determines an extended metric on the state space $\mathcal{S}(X)$:
\begin{center}
$d_{\mathcal{X},L}(\phi,\psi):=\sup\{|\phi(a)-\psi(a)|\,:\,a\in\mathcal{X},\,L(a)\le 1\}.$
\end{center}

This construction is, in fact, reversible, which is what makes the seminorm and metric pictures interchangeable. A metric $d$ on $\mathcal{S}(X)$ induces a nondegenerate seminorm on $X$:
\[
L_d(a):=\sup\left\{\frac{|\phi(a)-\psi(a)|}{d(\phi,\psi)}\,:\,\phi,\psi\in\mathcal{S}(X),\,\phi\neq\psi\right\}.
\]
If $L$ is a Lipschitz seminorm on $X$, then so is $L_{d_{\mathcal{X},L}}$. Moreover, if $L$ is \emph{lower semicontinuous}, then $L=L_{d_{\mathcal{X},L}}$. A CQMS is a pair $(X,L)$ where $X$ is a complete operator system equipped with a Lipschitz seminorm $L$ with dense domain $\mathcal{X}\subseteq X$ such that $d_{\mathcal{X},L}$ induces the weak$\,^\ast$-topology on $\mathcal{S}(X)$ (see \Sref{Sec2} for detailed discussion).
\smallskip

If an operator system spectral triple $(\mathcal{X},\mathcal{H},\mathcal{D})$ over $X$ satisfies the nondegeneracy condition, that is, $[\mathcal{D},x]=0$ if and only if $x\in\mathbb{C}.1_X$, then the formula $L_{\mathcal{D}}(x):=\|\mathrm{cl}([\mathcal{D},x])\|$ defines a lower semicontinuous Lipschitz seminorm on $X$ with dense domain $\mathcal{X}$. However, determining whether Connes' extended metric $d_{\mathcal{X},L_{\mathcal{D}}}$ on $\mathcal{S}(X)$ is indeed a metric that induces the weak$\,^\ast$-topology on $\mathcal{S}(X)$ is a genuinely hard analytic problem rather than a routine formality. Spectral triples inducing CQMS structures are called spectral metric spaces \cite{BMR}. Examples of spectral metric spaces include group $C^*$-algebras \cite{Rie7,OR,CR} and crossed products \cite{BMR,AKK3}, Connes-Landi $\theta$-deformations \cite{Li0}, AF $C^*$-algebras \cite{CI0}, Bunce-Deddens algebras, noncommutative tori, and some of their generalizations \cite{HSWZ,Kl}, as well as quantum groups of rapid decay \cite{BVZ}, standard Podle\'s spheres \cite{AK} and quantum projective spaces \cite{MK}, quantum $SU(2)$ \cite{KK}, and Toeplitz type $C^*$-extensions \cite{HZ} which will be of our interest.
\smallskip

Spectral metric spaces are but one source of CQMS; approximation of a CQMS by other, often finite-dimensional, CQMS is an important aspect of the theory, made precise by the quantum Gromov--Hausdorff distance introduced above, and it applies just as much to spectral metric spaces as to CQMS in general. Rieffel himself showed convergence of matrix algebras to the sphere \cite{Rie4}, and, under suitable conditions, that of Fourier truncations of compact quantum groups \cite{Rie6}. Other examples include approximations of quantum tori by fuzzy tori \cite{L0}, and the Podle\'s sphere $S_q^2$ by $q$-analogue of fuzzy spheres \cite{AKK1}. The need for approximations arises naturally from both mathematical and physical considerations \cite{Rie4,Rie5}. Recently, Connes and van Suijlekom introduced the notion of \emph{spectral truncation} in the operator system framework \cite{CS}; this line of investigation is developing rapidly \cite{S,CS1,LS,GS}, and new convergence results have been obtained in this setting as well.
\smallskip

The central object of this paper is certain extensions of unital $C^*$-algebras by stable ideals, together with their associated compact quantum metric structures. We work with the so called Toeplitz type $C^*$-extensions investigated by Hawkins and Zacharias \cite{HZ}, which generalizes earlier work of Christensen and Ivan \cite{CI}, and with the construction of compact quantum metric structures on these extensions. The central concern of this paper is:
\smallskip

``Given a Toeplitz type $C^*$-extension equipped with a compact quantum metric structure, together with a sequence of complete sub-operator systems converging in the quantum Gromov--Hausdorff distance in either the quotient or the unital $C^*$-algebra underlying the stable ideal, can this convergence be lifted to the extension?''
\smallskip

A primary difficulty is the commutator structure of the Hawkins--Zacharias Dirac operator on the extension that contains several interacting components arising from the quotient, the stable ideal, and the Toeplitz projection. Consequently, the approximation of the quotient or ideal does not immediately imply the approximation of the extension. We make these precise and describe our work in the following.
\medskip

\noindent\textbf{Toeplitz type extensions and their quantum metric structures.} Extensions play a crucial role in $C^*$-algebra theory. Given separable unital $C^*$-algebras $A\mbox{ and }B$ and faithful unital representations $\pi_A:A\to B(H_A)$ and $\pi_B:B\to B(H_B)$, the $C^*$-extension
\begin{IEEEeqnarray}{lCl}\label{ext}
0\longrightarrow\mathcal{K}(H_A)\otimes B\longrightarrow E\longrightarrow A\longrightarrow 0
\end{IEEEeqnarray}
where $\mathcal{K}(H_A)$ is the space of compact operators on $H_A$, is called \emph{Toeplitz type} if there exists an infinite-dimensional projection $P\in B(H_A)$ such that $[P\,,\,\pi_A(a)]\in\mathcal{K}(H_A),\,E\,\cong\,\mathcal{K}(PH_A)\otimes \pi_B(B)+P\pi_A(A)P\otimes\bbc .1_B$, and $\mathcal{K}(PH_A)\otimes \pi_B(B)\cap P\pi_A(A)P\otimes\bbc .1_B=\{0\}$ (see formally \Dref{Toeplitz extension}). The triple $(\pi_A,\pi_B,P)$ is then referred to as a \emph{Toeplitz triple} for the extension. If $(\mathcal{A},H_A,\mathcal{D}_A)$ is a spectral triple for $A$ such that $\mathcal{D}_A$ and $P$ commute and $[P,\pi_A(\mathcal{A})]\in \mathcal{C}(H_A)$, the space of differentiable compacts, then the quadruple $(\mathcal{A},H_A,\mathcal{D}_A,P)$ is said to be of \emph{Toeplitz type}. See \Sref{Sec3.1} and \ref{Sec3.2} for details.
\smallskip

Starting with a Toeplitz type quadruple $(\mathcal{A},H_A,\mathcal{D}_A,P)$ for the quotient $A$ and a spectral triple $(\mathcal{B},H_B,\mathcal{D}_B)$ for $B$, Hawkins and Zacharias constructed a spectral triple $(\mathcal{E},H_E,\mathcal{D}_E)$ with the desired spectral dimension for the extension $E$ (see \Sref{Sec3.3}). More importantly, they showed that if the spectral triple of Toeplitz type $(\mathcal{A},H_A,\mathcal{D}_A,P)$ satisfies the mild assumption that $\mathcal{D}_A$ is $P$-injective (i.e., $\mathrm{ker}(P\mathcal{D}_A)\,\cap\, PH_A=\{0\}$), and if both $(A,L_{\mathcal{D}_A})$ and $(B,L_{\mathcal{D}_B})$ inherit CQMS structures, then so does $(E,L_{\mathcal{D}_E})$. In other words, they construct a compact quantum metric structure on the extension starting with such structures on its constituents.
\medskip

\noindent\textbf{Main Results.} Now, let us discuss our findings in this article. Our goal here is not to be overly meticulous in the exact formulations of the theorems; instead, we seek to convey the overall picture. This allows us to avoid delving into the notational details and technicalities they entail.
 
Since every unital $C^*$-algebra is by definition a complete operator system, the main outcomes of \cite{HZ} discussed above fit into the operator system framework. Let $(\mathcal{X},H_X,\mathcal{D}_X)$ be a spectral triple for a complete operator system $X$ such that $(X,L_{\mathcal{D}_X})$ is a CQMS. Our first contribution is the introduction of the notion of unital $2$-contractive approximation adapted to general spectral metric spaces. Associated to a pair $(X,\{X_n\}_{n\in\bbn})$ consisting of $X$ and a sequence $\{X_n\}_{n\in\bbn}$ of complete sub-operator systems of it, a unital $2$-contractive approximation consists of a sequence of unital $2$-contractive maps on $B(H_X)$ satisfying certain conditions (\Dref{def 1}). Moreover, if $(\mathcal{X},H_X,\mathcal{D}_X,P)$ is a Toeplitz type quadruple for $X$, we introduce unital $2$-contractive approximation of Toeplitz type (\Dref{def 2}). Note that unital $2$-contractive maps acting on any $B(H)$ are not necessarily ucp maps, and hence the notion of $2$-contractive approximation is weaker than a similar notion of ucp approximation but sufficient for our purposes. These approximation tools are specifically designed to control the commutator seminorm associated with the Dirac operator on the extension. Consequently, we have the following main results (see Sections \ref{Sec5}--\ref{Sec6}).
\begin{thmA}
Consider a Toeplitz type extension \Eref{ext} such that $(E,L_{\mathcal{D}_E})$ is a spectral metric space by the Hawkins--Zacharias construction. Let $\{A_n\}_{n\in\bbn}$ be a sequence of complete sub-operator systems of $A$ such that each $A_n\cap\mathcal{A}$ is dense in $A_n$, and the sequence $\{(A_n, L_{\mathcal{D}_A}|_{A_n})\}_{n\in\bbn}$ of CQMS converges to the CQMS $(A, L_{\mathcal{D}_A})$ in the quantum Gromov--Hausdorff distance. For each $n\in\bbn$, let $Q_n=\sum_{k=1}^nP_k$ where $P_k$'s are the spectral projections of $\left(P\mathcal{D}_A|_{PH_A}\right)^{-1}\in\mathcal{K}(PH_A)$. One has the following:
\begin{enumerate}[$(i)$]
\item The subspaces $$E_n:= Q_n\mathcal{K}(PH_A)Q_n\otimes B+ PA_nP\otimes\bbc$$ are complete sub-operator systems of $E$, and each $E_n\cap\mathcal{E}$ is dense $\ast$-invariant in $E_n$.
\item If there exists a unital $2$-contractive approximation of Toeplitz type associated to the pair $(A,\{A_n\}_{n\in\bbn})$, then the sequence $\{(E_n,L_{\mathcal{D}_E}|_{E_{n}})\}_{n\in\bbn}$ of CQMS converges to the CQMS $(E, L_{\mathcal{D}_E})$ in the quantum Gromov--Hausdorff distance.
\end{enumerate}
\end{thmA}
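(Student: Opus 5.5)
For part $(i)$ I would verify the operator system axioms directly. $Q_n$ is a finite-rank projection onto a sum of eigenspaces of $P\mathcal{D}_A|_{PH_A}$; these are finite dimensional because the inverse is compact. Hence $Q_n\mathcal{K}(PH_A)Q_n\otimes B \cong M_{d_n}(B)$ is closed and self-adjoint. The second summand $PA_nP\otimes\bbc$ is self-adjoint and contains $P\otimes 1 = 1_E$, since $1_A\in A_n$. Closedness of the sum should follow from the Toeplitz condition $\mathcal{K}(PH_A)\otimes B\cap PAP\otimes\bbc=\{0\}$. More precisely, the quotient map $E\to A$ restricts on $E_n$ to an isomorphism from $E_n/(Q_n\mathcal{K}Q_n\otimes B)$ onto the closed subspace $A_n$, and the ideal part is closed, so the sum is closed.

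For density of $E_n\cap\mathcal{E}$, I would use that $\mathcal{E}$ contains $\mathcal{C}(PH_A)\otimes\mathcal{B}$ and $P\mathcal{A}P\otimes\bbc$. Since $Q_n$ commutes with $\mathcal{D}_A$ and has finite rank, $Q_n\mathcal{K}Q_n\otimes\mathcal{B}$ lies in $\mathcal{E}$ and is dense in $Q_n\mathcal{K}Q_n\otimes B$. Density of $A_n\cap\mathcal{A}$ in $A_n$ then gives density of $P(A_n\cap\mathcal{A})P\otimes\bbc$ in the second summand.

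For part $(ii)$ I would use the standard criterion for quantum Gromov--Hausdorff convergence of sub-CQMS. If $(E,L)$ is a CQMS and $E_n\subseteq E$ carry the restricted seminorm, then $\mathrm{dist}_q(E_n,E)\le$ the Hausdorff distance in $(\mathcal{S}(E),d_{L})$ between $\mathcal{S}(E)$ and the image of $\mathcal{S}(E_n)$. This in turn is controlled by the following condition: for every $\varepsilon>0$ and all large $n$, each $x\in\mathcal{E}$ with $L(x)\le1$ admits $x_n\in E_n\cap\mathcal{E}$ with $\|x-x_n\|\le\varepsilon$ and $L(x_n)\le 1$, or $L(x_n)\le 1+\varepsilon$ after rescaling. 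I expect this criterion is stated in \Sref{Sec2}. So the goal is a uniform approximation map $\Phi_n:\mathcal{E}\to E_n\cap\mathcal{E}$.

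Writing $x=k+PaP\otimes1$ with $k\in\mathcal{C}(PH_A)\otimes\mathcal{B}$, I would set
\[\Phi_n(x)=(Q_n\otimes1)k(Q_n\otimes1)+P\,\varphi_n(a)\,P\otimes1,\]
where $\varphi_n$ are the unital $2$-contractive maps of the Toeplitz-type approximation (\Dref{def 2}), mapping into $A_n$. The Dirac operator $\mathcal{D}_E$ of \Sref{Sec3.3} is assembled from $P\mathcal{D}_AP\otimes1$, $1\otimes\mathcal{D}_B$, and a grading on $H_A\oplus\ldots$. Its commutator with $x$ therefore splits into a $2\times2$ matrix of pieces: $[\mathcal{D}_A,a]$-type terms, $[\mathcal{D}_A\otimes1,k]$, $[1\otimes\mathcal{D}_B,k]$, and cross terms involving $[P,a]$. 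Since $Q_n$ commutes with $\mathcal{D}_A$ and $P$, compression by $Q_n\otimes1$ is a complete contraction commuting with the $k$-parts of the commutator. The $2$-contractivity of $\varphi_n$ controls the norm of the $2\times2$ operator matrix arising from the $a$-part. The Toeplitz refinement is needed to ensure $\varphi_n$ interacts well with $P$ and $[P,\cdot]$, so that the cross terms are also dominated. This would give $L_{\mathcal{D}_E}(\Phi_n(x))\le L_{\mathcal{D}_E}(x)$, perhaps up to $1+\varepsilon_n$.

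For the norm estimate, first use the Hawkins--Zacharias compactness result: $\{x:L(x)\le1,\|x\|\le R\}$ is totally bounded modulo scalars. Then reduce to finitely many $x$ via an $\varepsilon$-net. For each fixed $x$, $\|Q_nkQ_n-k\|\to0$ because $Q_n\to P$ strongly and $k$ is compact in the first leg. Likewise $\|\varphi_n(a)-a\|\to0$ by the pointwise convergence assumed in \Dref{def 1}. The main obstacle I anticipate is the commutator bound with the cross terms: the HZ Dirac operator mixes the ideal and quotient parts. I would first write $[\mathcal{D}_E,\Phi_n(x)]$ explicitly as a block matrix, then try to exhibit it as the image of $[\mathcal{D}_E,x]$ under a map of the form $\mathrm{id}_{M_2}\otimes\psi_n$ with $\psi_n$ contractive. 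Doing this is exactly what should consume the $2$-contractivity hypothesis.
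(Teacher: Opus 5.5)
Part $(i)$ and the overall structure of part $(ii)$ match the paper. Your closedness argument via the quotient map is \Lref{S}: the paper detects $a$ through $\Pi_1=\pi_\sigma\oplus\pi_\sigma$, i.e.\ through $\sigma$, and then recovers the ideal component. Your density argument is \Pref{4.6}; your stronger remark $Q_n\mathcal{K}(PH_A)Q_n\subseteq\mathcal{C}(PH_A)$ is correct, since $\mathrm{ran}\,Q_n$ is a finite sum of eigenspaces of $P\mathcal{D}_A$. Your $\Phi_n$ is the paper's $\alpha_n=\beta_n\otimes\mathrm{id}$ restricted to $\mathcal{E}$, by \Dref{def 2}$(iii)(b),(c)$, and it is fed into \Lref{Kaad} as in the paper.

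The norm estimate takes a different route. The paper's \Pref{3.6} is quantitative. For $L_{\mathcal{D}_E}(e)\le 1$, the estimates from the proof of Lemma 5.4 of Hawkins--Zacharias give $\|x(P\mathcal{D}_A\otimes1)\|\le 3$ and $\|(P\mathcal{D}_A\otimes1)x\|\le 3$. Hence $\|x-x(Q_n\otimes1)\|\le 3\|Y-YQ_n\|$ with $Y=(P\mathcal{D}_A|_{PH_A})^{-1}$ compact, uniformly in $e$, and the uniform condition \Dref{def 2}$(ii)$ handles $a$. You instead use total boundedness of the Lip-ball together with convergence at the points of a finite net. This is valid, but you must state the equicontinuity bound you left implicit. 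Since $\|a\|=\|\sigma(e)\|\le\|e\|$, you get $\|x\|\le 2\|e\|$ and hence $\|\Phi_n(e)\|\le 3\|e\|$ for all $n$; and $\Phi_n$ is unital, so you may work modulo scalars. Also, the convergence you need is that of \Dref{def 2}$(ii)$, not \Dref{def 1}. Your argument is softer and shows that only pointwise convergence $\beta_n(a)\to a$ on $\mathcal{A}$ is really needed. The paper's argument gives an explicit rate in terms of $\|Y-YQ_n\|$.

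The Lipschitz inequality is the heart of $(ii)$, and in your proposal it is only a plan; the first half of your description would not work as stated. The entries of $[\mathcal{D}_2\pm i\mathcal{D}_3,\Pi_2(e)]$ mix the two parts, e.g.\ $P[P\mathcal{D}_A,a]\otimes1-x(P\mathcal{D}_A\otimes1)$. So bounding the $k$-part by compression and the $a$-part by $2$-contractivity of $\varphi_n$ separately gives no bound by $L_{\mathcal{D}_E}(e)$.

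Your list of terms is also off: there is no $[\mathcal{D}_A\otimes1,k]$ term and no $[P,a]$ term. The $x$-entries are the one-sided products $x(P\mathcal{D}_A\otimes1)$ and $(P\mathcal{D}_A\otimes1)x$, together with $[1\otimes\mathcal{D}_B,x]$. The $a$-entries are $[P\mathcal{D}_A,a]$ and $[(1-P)\mathcal{D}_A,a]$.

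What is needed, as in \Pref{4.13}, is a \emph{single} map applied entrywise. Conjugate the block matrix by $V_1=v\otimes1\otimes I_2$, and check that $\alpha_n$ sends each $V$-conjugated entry for $e$ (with $V=v\otimes1$) to the corresponding one for $\alpha_n(e)$:
\begin{itemize}
\item for $[1\otimes\mathcal{D}_B,x]$, use $(iii)(c)$ and $vQ_nP=Q_nPv$;
\item for the one-sided products, use $(iii)(d)$;
\item for the $a$-entries, use $(iii)(a),(b)$;
\item the $\mathcal{D}_1$-block needs only $(iii)(a)$ and contractivity.
\end{itemize}
These intertwinings do not follow from any norm condition, since a commutator with the unbounded $\mathcal{D}_A$ is invisible to contractivity. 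That is why \Dref{def 2}$(iii)$ imposes them. In particular, $(iii)(c)$ is what lets one $2$-contractive map both compress the ideal entries and approximate the quotient entries. With these conditions the inequality $L_{\mathcal{D}_E}(\alpha_n(e))\le L_{\mathcal{D}_E}(e)$ is exact, with no $1+\varepsilon_n$.

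One caveat, shared with the paper: the map applied entrywise is $\beta_n\otimes\mathrm{id}$. So what is used is $2$-contractivity of $\beta_n\otimes\mathrm{id}$, not contractivity of some $\psi_n$. This is automatic when $\beta_n$ is ucp, as in all the paper's examples. It is not a formal consequence of $2$-contractivity of $\beta_n$, because for $\dim H_B=\infty$ the map $\beta_n\otimes\mathrm{id}_{B(H_B)}$ contains $(\beta_n)_k$ on $M_k(B(H_A))$ for every $k$.
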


Thus, controlled approximations of the quotient lift to approximations for the extension.

\begin{thmB}
Consider a Toeplitz type extension \Eref{ext} such that $(E,L_{\mathcal{D}_E})$ is a spectral metric space by the Hawkins--Zacharias construction. Let $\{B_n\}_{n\in\bbn}$ be a sequence of complete sub-operator systems of $B$ such that each $B_n\cap\mathcal{B}$ is dense in $B_n$, and the sequence $\{(B_n, L_{\mathcal{D}_B}|_{B_n})\}_{n\ge 1}$ of CQMS converges to the CQMS $(B, L_{\mathcal{D}_B})$ in the quantum Gromov--Hausdorff distance. One has the following:
\begin{enumerate}[$(i)$]
\item The subspaces $$G_n:= \mathcal{K}(PH_A) \otimes B_n+ PAP\otimes\bbc$$ are complete sub-operator systems of $E$, and each $G_n\cap\mathcal{E}$ is dense $\ast$-invariant in $G_n$.
\item Assume that there exists a Toeplitz type sequence of $2$-contractive maps on the quotient $A$. Then, if there exists a unital $2$-contractive approximation associated to the pair $(B,\{B_n\}_{n\in\bbn})$, the sequence $\{(G_n,L_{\mathcal{D}_E}|_{G_{n}})\}_{n\in\bbn}$ of CQMS converges to the CQMS $(E, L_{\mathcal{D}_E})$ in the quantum Gromov--Hausdorff distance.
\end{enumerate}
\end{thmB}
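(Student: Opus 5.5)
Part $(i)$ is structural. $\mathcal{K}(PH_A)\otimes B_n$ is a closed subspace of $\mathcal{K}(PH_A)\otimes B$, because $B_n$ is closed and the minimal tensor norm is injective. Closed subspaces are also stable under adding $PAP\otimes\bbc$, which is closed. Stability under adjoints is clear from the definition. The unit $P\otimes 1_B=P1_AP\otimes 1$ lies in $PAP\otimes\bbc$. For closedness of the sum, I would use the quotient map $E\to A$: its kernel is $\mathcal{K}(PH_A)\otimes B$, and it is isometric on $PAP\otimes\bbc$ modulo compacts, so the sum of a closed subspace of the ideal and a complemented piece is closed. For density of $G_n\cap\mathcal{E}$, recall from \Sref{Sec3.3} the dense domain $\mathcal{E}$. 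It contains the algebraic span of $\mathcal{C}(PH_A)\otimes_{\rm alg}\mathcal{B}$ together with $P\mathcal{A}P\otimes 1$. Approximate a general element $k\otimes b+PaP\otimes 1$ as follows: take $b$ in $B_n\cap\mathcal{B}$, which is dense by hypothesis; take $k$ among finite-rank operators built from eigenvectors of $P\mathcal{D}_AP$, which are differentiable compacts; and take $a\in\mathcal{A}$. Each approximant is $\ast$-invariant, and together they show $G_n\cap\mathcal{E}$ is dense $\ast$-invariant in $G_n$.

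For $(ii)$, I would use the standard criterion (stated in \Sref{Sec2}) for a sequence of complete sub-operator systems with restricted Lipschitz seminorm. It suffices to show $\sup_{x\in\mathcal{E},\,L_{\mathcal{D}_E}(x)\le 1}\mathrm{dist}(x,G_n\cap\mathcal{E})\to 0$, uniformly modulo scalars. Equivalently, I would construct maps $\Phi_n:E\to G_n$ that are unital and satisfy $L_{\mathcal{D}_E}(\Phi_n x)\le L_{\mathcal{D}_E}(x)$ (or $\le(1+\epsilon_n)L_{\mathcal{D}_E}(x)$), together with $\|\Phi_n x-x\|\le\epsilon_n L_{\mathcal{D}_E}(x)$ on the Lipschitz ball. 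The natural candidate is $\Phi_n=\mathrm{id}\otimes\psi_n$ on the $\mathcal{K}(PH_A)\otimes B$ component and the identity on $PAP\otimes\bbc$. Here $\psi_n$ comes from the unital $2$-contractive approximation of $(B,\{B_n\})$. By \Dref{def 1}, $\psi_n$ maps $B$ into $B_n$, commutes appropriately with $[\mathcal{D}_B,\cdot]$ (contractive on commutators), and converges to the identity uniformly on the $L_{\mathcal{D}_B}$-ball. Well-definedness on the non-direct sum is guaranteed by the trivial intersection in \Dref{Toeplitz extension}.

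The Dirac operator $\mathcal{D}_E$ has the Hawkins--Zacharias form, roughly $\mathcal{D}_A$-part $\otimes 1$ plus a grading $\otimes\mathcal{D}_B$ on $PH_A\otimes H_B$. Its commutator with $k\otimes b+PaP\otimes 1$ therefore splits into blocks: $[P\mathcal{D}_AP,k]\otimes b$, $k\otimes[\mathcal{D}_B,b]$, and terms involving $[\mathcal{D}_A,a]$ and $[P,a]$. Applying $\Phi_n$ acts on these blocks as $\mathrm{id}\otimes\psi_n$, or as $\mathrm{id}\otimes(\text{the }\psi_n\text{-induced map on }B(H_B))$ on the $[\mathcal{D}_B,\cdot]$ block. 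The Toeplitz type sequence of $2$-contractive maps on $A$ is where the hypothesis enters: it controls the mixed entries of the commutator, which pair the $A$-data (including $PaP$) with the $B$-data, written as a $2\times2$ operator matrix. Because the commutator is naturally a $2\times 2$ matrix, $2$-contractivity, rather than mere contractivity, gives $\|[\mathcal{D}_E,\Phi_n x]\|\le\|[\mathcal{D}_E,x]\|$. I expect this to be the main obstacle. The amplification $\mathrm{id}_{\mathcal{K}}\otimes\psi_n$ of a merely $2$-contractive (not cp) map need not be contractive. So I must exploit the specific structure: $x$ is approximated by finite sums against the spectral projections of $P\mathcal{D}_AP$, and the needed norm estimates only involve $2\times 2$ amplifications arranged blockwise. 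I would first try to reduce the general case to this blockwise form via the spectral decomposition of $P\mathcal{D}_AP$, then pass to limits.

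Finally, for the norm estimate $\|\Phi_n x-x\|\to0$ uniformly, I would use the Hawkins--Zacharias argument (total boundedness of the Lipschitz ball in $E$). By that argument, the compact part of a Lipschitz-bounded element is, up to $\epsilon$, of the form $\sum_{j\le N}Q_j\,\cdot\,Q_j\otimes b_j$ with each $b_j$ having $L_{\mathcal{D}_B}(b_j)$ bounded. This uses $P$-injectivity: $(P\mathcal{D}_AP)^{-1}$ is compact, so high spectral pieces are small. On such finite sums, $\mathrm{id}\otimes\psi_n$ converges to the identity uniformly by the approximation property of $\psi_n$. Combining this with the Lipschitz contractivity from the previous step and the criterion yields quantum Gromov--Hausdorff convergence of $(G_n,L_{\mathcal{D}_E}|_{G_n})$ to $(E,L_{\mathcal{D}_E})$.
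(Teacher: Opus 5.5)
Your part $(i)$ is fine and is essentially the paper's argument: \Lref{S} uses $\Pi_1=\pi_\sigma\oplus\pi_\sigma$, which is in effect your quotient map $E\to A$, and density is \Lref{dense}/\Pref{4.5}. For $(ii)$ you take a different route, a single map $\Phi_n=\mathrm{id}\otimes\psi_n$ on all of $\mathcal{E}$. The paper instead works in two scales. The Toeplitz type sequence $\{\beta_m\}$ gives $\alpha_m=\beta_m\otimes\mathrm{id}$, which compresses the compact leg to $Q_m$ via $\beta_m(TP)=Q_mTQ_mP$. This yields $\mathrm{dist}_Q(E,F_{m,\infty})<\varepsilon/3$ and $\mathrm{dist}_Q(G_n,F_{m,n})<\varepsilon/3$ uniformly in $n$ (\Pref{3.6}, \Pref{4.13}). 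Then $\gamma_n=\mathrm{id}\otimes\sigma_n$ is applied only on $F_{m,\infty}$, whose compact part is $M_k(B)$ with $k=\mathrm{rank}\,Q_m$, so $\|e-\gamma_n(e)\|$ becomes a finite-matrix estimate (\Pref{reqd1}). The triangle inequality for $\mathrm{dist}_Q$ concludes. So the Toeplitz type sequence produces this vertical truncation; it does not ``control the mixed entries''. In your construction it plays no role: $\Phi_n$ leaves the $A$-data untouched. The mixed entries are handled as in \Pref{reqd2}, by unitality of $\psi_n$ and conjugation with $U=1\otimes u$ from \Dref{def 1}$(iii)$. This turns the commutator matrix of $\Phi_n(e)$ into $\Phi_n$ applied entrywise to the conjugated commutator matrix of $e$.

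The genuine gap is the step you flag yourself, $L_{\mathcal{D}_E}(\Phi_n e)\le L_{\mathcal{D}_E}(e)$, which remains unproved. Your suggested blockwise reduction cannot work from $2$-contractivity. Even on a $Q_m$-block, the map acting on the $2\times 2$ commutator matrix is the $2k$-fold amplification of $\psi_n$. Moreover, the entries $P[P\mathcal{D}_A,a]\otimes 1$ and $[(1-P)\mathcal{D}_A,a]\otimes 1$ are not block-finite at all. Be aware that the paper passes this step only by asserting that $\mathrm{id}\otimes\sigma_n$ (and likewise $\beta_m\otimes\mathrm{id}$ in \Pref{4.13}) is $2$-contractive. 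Contractivity of $\mathrm{id}_{B(H_A)}\otimes\sigma_n$ already forces $\sigma_n$ to be completely contractive, so that assertion is really justified only for completely contractive maps, such as the ucp maps in all the paper's examples. So you have not missed a trick the paper supplies, but as written your argument does not close.

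If you assume $\psi_n$ completely contractive, your single-map route does close. It is then simpler than the paper's, needing neither the Toeplitz type sequence on $A$ nor the triangle inequality. You must, however, also control the tail $\Phi_n\big(x-(Q_m\otimes1)x(Q_m\otimes1)\big)$, which your sketch skips. Use $\Phi_n\big((Q_m\otimes1)x(Q_m\otimes1)\big)=(Q_m\otimes1)\Phi_n(x)(Q_m\otimes1)$ together with the bound $\|y-(Q_m\otimes1)y(Q_m\otimes1)\|\le 6\|Y-YQ_m\|\,L_{\mathcal{D}_E}(y+PaP\otimes1)$ from the proof of \Pref{3.6}, applied to $y=x$ and $y=\Phi_n(x)$. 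This gives $\|e-\Phi_n e\|\le\big(12\|Y-YQ_m\|+C_m\varepsilon_n\big)L_{\mathcal{D}_E}(e)$, with $\varepsilon_n\to 0$ from \Dref{def 1}$(ii)$. Then choose $m$ first and $n$ second.
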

Thus, controlled approximations for the unital $C^*$-algebra $B$ underlying the stable ideal $\mathcal{K}(H_A)\otimes B$ induce approximations for the extension.

We emphasize that the $2$-contractive approximation hypothesis in Theorems A and B is imposed solely on the approximating sequence in $A\mbox{ or in }B$, and not on the extension $E$.
\smallskip

The proofs of Theorems A and B combine three ingredients: analysis of the Dirac operator constructed in \cite{HZ}, unital $2$-contractive approximation and its Toeplitz type refinement, and construction of auxiliary complete sub-operator systems of the extension along the horizontal and vertical directions. The case of simultaneous approximation from the quotient and the ideal requires formulating a ``joint'' notion of unital $2$-contractive approximation of Toeplitz type, and we shall address this elsewhere.
\medskip

We conclude the Introduction with one important remark. In this article, we work with the domain $\mathcal{E}$ of the Lip-norm $L_{\mathcal{D}_E}$ on the extension $E$ that comes from the spectral triple $(\mathcal{E},H_E,\mathcal{D}_E)$. One can also consider the associated Lipschitz algebra $\mathrm{Lip}_{\mathcal{D}_E}(E)$ of the spectral triple as the domain of the Lip-norm, denoted by $L_{\mathcal{D}_E}^{\max}$. However, the tools and techniques used here to investigate the quantum Gromov--Hausdorff convergence do not seem sufficient to handle $\mathrm{Lip}_{\mathcal{D}_E}(E)$. This situation is somewhat reminiscent of the case of the Podle\'s sphere studied in \cite{AKK2}, where techniques from von Neumann algebras are used to a greater extent than $C^*$-algebraic methods to handle the Lipschitz algebra. We expect that the convergence results established here also hold in the Lipschitz algebra setting as well, although we do not yet have a complete proof.
\medskip

The organization of the article is as follows. In \Sref{Sec2} we recall preliminary material about operator systems and quantum Gromov--Hausdorff distance that are essential in the context of the paper. In \Sref{Sec3}, we briefly recall Toeplitz type $C^*$-extensions and discuss the compact quantum metric structures on this class of extensions. \Sref{Sec4} develops the notion of unital $2$-contractive approximation and its Toeplitz type refinement. This is followed by the construction in \Sref{Sec5} of complete sub-operator systems in the extension from those on the constituents. Finally, \Sref{Sec6} forms the main part of the article, where we investigate quantum Gromov--Hausdorff convergence for Toeplitz type extensions.
\medskip

\noindent\textbf{\textit{Acknowledgement.}} S. Guin thanks Vern Paulsen for providing examples for a question on unital $2$-contractive maps and ucp maps.


\newsection{Preliminaries}\label{Sec2}

In this section, we recall the relevant material on compact quantum metric spaces, generalized in the operator system framework.

\subsection{Operator systems and compact quantum metric spaces}\label{Sec2.1}

We begin with the definition of (concrete) complete operator systems.
\begin{dfn}
A (concrete) operator system $\mathcal{X}$ is a subspace of a unital $C^*$-algebra $\mathscr{A}$ such that $\mathcal{X}$ is stable under the adjoint operation and contains the unit $1_{\mathscr{A}}$. We call it a complete operator system if additionally it is closed.
\end{dfn}

\begin{dfn}
A linear subspace $\mathcal{Y}$ of an operator system $\mathcal{X}$ is called a sub-operator system if $1_{\mathcal{X}}\in \mathcal{Y}$ and $y^{*} \in \mathcal{Y}$ for all $y \in \mathcal{Y}$.

A sub-operator system $Y$ of a complete operator system $X$ is called a complete sub-operator system if $Y$ is closed in $X$.
\end{dfn}

Throughout the article, notation $X,Y$ etc. will denote complete operator systems, whereas calligraphic letters $\mathcal{X,Y}$ etc. stand for operator systems. We shall only work with concrete operator systems and hence drop the word `concrete' for brevity.

A complete operator system $X$ has an associated state space $\mathcal{S}(X)$ consisting of all positive linear functionals that preserve the unit. A state on $X$ automatically has norm $1$, and the state space $\mathcal{S}(X)$ therefore becomes a compact Hausdorff space for the weak$\,^*$-topology. It is known that any positive map $\Phi:X\to Y$ into another operator system $Y$ need not be a contraction, but it is bounded with $\|\Phi\|\le 2\|\Phi(1_X)\|$ \cite[Propn. 2.1]{Paul}. If $\Phi$ is completely positive, then $\Phi$ is completely bounded and $\|\Phi\|=\|\Phi\|_{cb}=\|\Phi(1_X)\|$ \cite[Propn. 3.6]{Paul}. If $\Phi$ is assumed to be unital and contractive, then it is automatically positive.

The self-adjoint part of a complete operator system $X$ 
\[
X_{sa}:= \{x \in X : x= x^*\}
\]
forms a real vector space, which is an order-unit space equipped with the order-unit $1_X$ and the partial order inherited from the ambient $C^*$-algebra $A$. In particular, positivity in $X_{sa}$ is determined by the positivity relation in $A$.

Any state on $X_{sa}$ extends uniquely to a complex-valued state on $X$. Indeed, given a state $\varphi :X_{sa} \rightarrow \mathbb{R}$, the extension $\widetilde{\varphi}: X \rightarrow \mathbb{C}$ is defined by evaluating $\varphi$ separately on the real and imaginary parts of an element $x \in X$:
\[\widetilde{\varphi}(x):=\varphi(\Re(x))+i\,\varphi(\Im(x))\,.\]

\begin{dfn}[\cite{KK}]\label{CQMS}
A compact quantum metric space is a complete operator system $X$ together with a seminorm $L:X\rightarrow [0,\infty]$ satisfying the following:
\begin{enumerate}[$(i)$]
\item $L(x)=0$ if and only if $x\in\bbc$.
\item The set $\mathrm{Dom}(L):=\{x\in X:L(x)<\infty\}$ is dense in $X$ and $L$ satisfies that $L(x^*)=L(x)$ for all $x\in X$.
\item The function $d_L(\mu,\nu):=\sup\{|\mu(x)-\nu(x)|\,:\,L(x)\leq 1\}$ defines a metric on the state space $\mathcal{S}(X)$ that metrises the weak$\,^\ast$-topology on it.
\end{enumerate}
\end{dfn}
A seminorm $L$ on $X$ satisfying conditions $(i)$ and $(ii)$ above is referred to as a nondegenerate Lipschitz seminorm, and if $(X,L)$ defines a compact quantum metric space, then $L$ is called a Lip-norm on $X$. The metric $d_L$, sometimes written as $\mathrm{mk}_L$, is referred to as the Monge-Kantorovich metric.

The following result of Rieffel is an indispensable tool in verifying compact quantum metric structure.

\begin{thm}[\cite{Rie1}]
Let $X$ be a complete operator system and $L: X \rightarrow [0, \infty]$ be a seminorm satisfying (i) and (ii) from Definition \ref{CQMS}. Then $(X, L)$ is a compact quantum metric space if and only if $(X, L)$ has finite diameter and the image of the set $\{x \in X\,:\,L(x)\leq 1\mbox{ and }\|x\| \leq 1\}$ under the quotient map $X \rightarrow X/\mathbb{C}$ is totally bounded for the quotient norm.
\end{thm}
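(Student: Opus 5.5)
The plan is to reduce the statement to the classical Arzelà–Ascoli-type criterion on the state space, working with the quotient Banach space $X/\mathbb{C}$ and the dual pairing with $\mathcal{S}(X)$. Write $B_L:=\{x\in X: L(x)\le 1,\ \|x\|\le 1\}$ and $\widetilde{B}_L$ for its image in $X/\mathbb{C}$. The key observation I will use throughout is that $d_L$ only sees classes modulo scalars: for states $\mu,\nu$ one has $|\mu(x)-\nu(x)|=|\mu(x+\lambda)-\nu(x+\lambda)|$. Moreover, $\mu-\nu$ annihilates $\mathbb{C}1$ and has norm at most $2$, so $|\mu(x)-\nu(x)|\le 2\|\tilde x\|_{X/\mathbb{C}}$. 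Conversely, if $L(x)\le 1$ and the diameter $D$ of $(\mathcal{S}(X),d_L)$ is finite, pick any state $\mu$ and set $y=x-\mu(x)1$. Then $|\nu(y)|\le D$ for all states $\nu$, which bounds the numerical radius of the real and imaginary parts of $y$ (using $L(x^*)=L(x)$ to treat them separately). Hence $\|y\|\le 2D$. So $\{\tilde x: L(x)\le 1\}=\widetilde{B}_{L}$ up to scaling by $2D$.

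For the direction ``$\Leftarrow$'', assume finite diameter and total boundedness of $\widetilde{B}_L$. First, $d_L$ is a genuine metric. It is finite by the diameter assumption. It separates points because $\mathrm{Dom}(L)$ is dense and $L(x^*)=L(x)$, so self-adjoint elements of finite $L$ separate states. The identity map $(\mathcal{S}(X),w^*)\to(\mathcal{S}(X),d_L)$ is then shown to be continuous: given $\varepsilon>0$, choose a finite $\varepsilon$-net $\tilde x_1,\dots,\tilde x_k$ for $\{\tilde x:L(x)\le1\}$, which is totally bounded by the rescaling above. If $\nu$ is weak$^*$-close to $\mu$ on $x_1,\dots,x_k$, the estimate $|\mu(x)-\nu(x)|\le 2\|\tilde x-\tilde x_j\|+|\mu(x_j)-\nu(x_j)|$ gives $d_L(\mu,\nu)\le 3\varepsilon$. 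A continuous bijection from the compact space $(\mathcal{S}(X),w^*)$ onto the Hausdorff space $(\mathcal{S}(X),d_L)$ is a homeomorphism, which finishes this direction.

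For ``$\Rightarrow$'', assume $d_L$ metrizes the weak$^*$ topology. Compactness gives finite diameter. For total boundedness, consider the restriction map $x\mapsto f_x:=(\mu\mapsto\mu(x))$ from $\{x:L(x)\le1\}$ into $C(\mathcal{S}(X))$. Each $f_x$ is $1$-Lipschitz for $d_L$. After normalizing by $\mu_0(x)=0$ for a fixed state $\mu_0$, these functions are uniformly bounded by $D$. By Arzelà–Ascoli on the compact metric space $(\mathcal{S}(X),d_L)$, the family is totally bounded in sup norm. It remains to transfer this back to the quotient norm. By Kadison's function representation, the map $x\mapsto f_x$ is isometric on $X_{sa}$, and for general $x$ it is bicontinuous with constant $2$ via real and imaginary parts. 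Hence the family $\{x-\mu_0(x)1\}$ is totally bounded in $X$, so its image in $X/\mathbb{C}$ is totally bounded. Intersecting with $\|x\|\le1$ and projecting yields a subset of this totally bounded set, which proves the claim.

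I expect the main obstacle to be the norm comparison: $\|x-\mu_0(x)1\|$ versus the sup over states. For non-self-adjoint $x$ this needs the splitting into real and imaginary parts. That splitting in turn requires that $L(\Re x)\le L(x)$, which follows from $L(x^*)=L(x)$ and the triangle inequality. I will handle it once, as a lemma, before the two directions.
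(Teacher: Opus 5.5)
Your argument is correct. The paper gives no proof of this statement; it is quoted from Rieffel. Your route is essentially Rieffel's original proof, transplanted from real order-unit spaces to complex operator systems at the cost of a factor $2$ from splitting into real and imaginary parts:
\begin{itemize}
\item[($\Rightarrow$)] Arzel\`a--Ascoli on the compact metric space $(\mathcal{S}(X),d_L)$, combined with Kadison's function representation.
\item[($\Leftarrow$)] A finite net in $X/\mathbb{C}$ gives weak$^*$-to-$d_L$ continuity of the identity, and the compact-to-Hausdorff argument finishes.
\end{itemize}

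Two small corrections.
\begin{itemize}
\item The rescaling step should read $\{\tilde x : L(x)\le 1\}\subseteq \max(1,2D)\,\widetilde{B}_L$. Scaling by $2D$ alone fails when $2D<1$, since dividing by $2D$ can push $L$ above $1$.
\item The symmetry $L(x^*)=L(x)$ is never actually needed. Since $\nu(\Re y)=\Re\,\nu(y)$ for every state, you already get $\|\Re y\|,\|\Im y\|\le D$. The comparison $\|z\|\le 2\sup_{\nu}|\nu(z)|$ does not involve $L$ at all. So the ``main obstacle'' you anticipate does not arise.
\end{itemize}
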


Now, let us recall the definition of spectral triple, the fundamental ingredient in noncommutative geometry, generalized in the operator system framework.

\begin{dfn}[\cite{CS,LS}]\label{operator system spectral triple}
An operator system spectral triple is a triple $(\mathcal{X},\mathcal{H}, \mathcal{D})$ where $\mathcal{X}$ is a dense subspace of a complete operator system $X$ in $B(\mathcal{H})$, $\mathcal{H}$ is a Hilbert space and $\mathcal{D}$ is an unbounded self-adjoint operator acting on $\mathcal{H}$ with compact resolvent such that $T\,(\mathrm{dom}(\mathcal{D}))\subseteq\mathrm{dom}(\mathcal{D})$ and $[\mathcal{D},T]$ extends to a bounded operator for all $T\in \mathcal{X}$.
\end{dfn}

Any spectral triple $(\mathcal{X},\mathcal{H},\mathcal{D})$ over $X$ induces a Lipschitz seminorm $L_\mathcal{D}:X\rightarrow[0,\infty]$ given by $L_\mathcal{D}(x):=\|\mathrm{cl}([\mathcal{D},x])\|$ for $x\in \mathcal{X}$. If $L_\mathcal{D}$ metrises the weak$\,^\ast$-topology on the state space $\mathcal{S}(X)$, one often says that the spectral triple $(\mathcal{X},\mathcal{H},\mathcal{D})$ is a spectral metric space, a notion originally dating back to \cite{BMR}.

It is always an interesting and nontrivial question whether a spectral triple inherits a compact quantum metric structure, but the verification is not trivial and, in fact, often quite involved.

\subsection{Quantum Gromov--Hausdorff distance}\label{Sec2.2}

Quantum Gromov--Hausdorff distance for CQMS, introduced by Rieffel in \cite{Rie3}, is the noncommutative counterpart of the classical Gromov–Hausdorff distance for compact metric spaces. We briefly recall it here following \cite{KK} in the operator system framework.

Let $(X, L_{X})$ and  $(Y, L_{Y})$ be compact quantum metric spaces in the sense of \Dref{CQMS}. Recall that the self-adjoint part $X_{sa}$ of a complete operator system $X$ is an order-unit space.
\begin{dfn}[\cite{KK}]
A Lipschitz seminorm $L: X\oplus Y\rightarrow [0, \infty]$ is said to be admissible if the pair $(X\oplus Y,L)$ is a compact quantum metric space, $\mathrm{Dom}(L)=\mathrm{Dom}(L_X)\oplus\mathrm{Dom}(L_Y)$ and the quotient seminorms induced by $L_{sa}$ via the coordinate projections $\mathrm{Dom}(L)_{sa}\to\mathrm{Dom}(L_X)_{sa}$ and $\mathrm{Dom}(L)_{sa}\to\mathrm{Dom}(L_Y)_{sa}$ agree with $(L_X)_{sa}$ and $(L_Y)_{sa}$ respectively.
\end{dfn}

Whenever $L$ is an admissible Lipschitz seminorm, it follows that the coordinate projections $X\oplus Y\to X$ and $X\oplus Y\to Y$ induce isometries $\mathcal{S}(X)\to\mathcal{S}(X\oplus Y)$ and $\mathcal{S}(Y)\to\mathcal{S}(X\oplus Y)$ of the state spaces equipped with the Monge--Kantorovic metrics coming from the relevant Lip-norms.

These embeddings allow us to measure the Hausdorff distance between the state spaces $\mathcal{S}(X)$ and $\mathcal{S}(Y)$ with respect to the Monge--Kantorovic metric on the state space $\mathcal{S}(X\oplus Y)$. Denoting this Hausdorff distance by $\mathrm{dist}_{H}^{\rho_{L}}\big(\mathcal{S}(X), \mathcal{S}(Y)\big)\in[0,\infty)$, one defines the following.
\begin{dfn}[\cite{Rie3}]
The quantum Gromov--Hausdorff distance between $(X, L_X)$ and $(Y, L_Y)$ is defined as
\[
\mathrm{dist}_Q\big((X, L_X), (Y, L_Y)\big):=\inf\big\{\mathrm{dist}_{H}^{\rho_{L}}\big(\mathcal{S}(X)\,,\,\mathcal{S}(Y)\big)\,:\,L:X \oplus Y\rightarrow[0,\infty]\,\,\mathrm{admissible}\big\}.
\]
\end{dfn}
The quantum Gromov–Hausdorff distance quantifies how closely the state spaces of two compact quantum metric spaces can be positioned within a larger ambient compact quantum metric space while preserving their respective metric structures.
\smallskip

We refer to \cite{KK} for preliminary results on quantum Gromov--Hausdorff distance in the operator system framework, and record the following result which will be useful for our purposes.
\begin{lmma}[\cite{Rie3,AKK1}]\label{Kaad}
Let $(X,L)$ be a compact quantum metric space and $Y\subseteq X$ be a complete sub-operator system equipped with the seminorm $K:Y\longrightarrow[0,\infty]$ with dense $\ast$-invariant domain $\mathrm{Dom}(K)\subseteq\mathrm{Dom}(L)$ such that $K(y)=L(y)$ for all $y\in\mathrm{Dom}(K)$. Let $\varepsilon>0$. If for every $x\in X$, there exists $y\in Y$ such that $K(y)\leq L(x)$ and $\|x-y\|\leq\varepsilon\, L(x)$, then $\mathrm{dist}_Q\big((X,L),(Y,K)\big)\leq\varepsilon$, where $\mathrm{dist}_Q$ denotes the quantum Gromov--Hausdorff distance.
\end{lmma}


\newsection{Toeplitz type $C^*$-extensions and spectral triples}\label{Sec3}

We recall some necessary facts about Toeplitz type $C^*$-extensions from \cite{HZ}, and briefly discuss the compact quantum metric structures arising from spectral triples.

\subsection{Toeplitz type $C^*$-extensions}\label{Sec3.1}

Let $A\mbox{ and }B$ be unital separable $C^*$-algebras faithfully represented on separable Hilbert spaces $H_A\mbox{ and }H_B$ via $\pi_A\mbox{ and }\pi_B$ respectively. Consider a short exact sequence of $C^*$-algebras
\begin{IEEEeqnarray}{lCl}\label{Toeplitz type extension}
0\longrightarrow\mathcal{K}(H_A)\otimes B\longrightarrow E\longrightarrow A\longrightarrow 0
\end{IEEEeqnarray}
where the ideal $\mathcal{K}(H_A)\otimes B$ is essential, i.e. it has non-zero intersection with any other ideal $I\subseteq E$. The extension is characterized by a $\ast$-homomorphism $\Psi:A\to\mathcal{Q}_B$, the Busby invariant, where $\mathcal{Q}_B:=\mathcal{L}_B/\mathcal{K}_B$ is the generalized Calkin algebra with respect to the $C^*$-algebra $B$. If $q_B:\mathcal{L}_B\to\mathcal{Q}_B$ denotes the quotient map, we have
\[
E\,\cong\,\mathcal{L}_B\oplus_{(q_B,\Psi)}A:=\{(x,a)\in\mathcal{L}_B\oplus A\,:\,q_B(x)=\Psi(a)\},
\]
and the following diagram commutes:
\begin{center}
\begin{tikzcd}[row sep=small, column sep=tiny] 
0 &\longrightarrow& \mathcal{K}_B \arrow[d, "\,\pi|_{\overline{B}}"]  & \xhookrightarrow{\quad} & E \arrow[d, "\,\pi"]  & \longrightarrow & A \arrow[d, "\,\Psi"] &\longrightarrow& 0\\ 
0 &\longrightarrow& \mathcal{K}_B & \xhookrightarrow{\quad} & \mathcal{L}_B & \longrightarrow & \mathcal{Q}_B &\longrightarrow& 0
\end{tikzcd}
\end{center}
where $\pi:E\to\mathcal{L}_B$ is given by $\pi(x,a)=x$. We consider those extensions in which the Busby invariant $\Psi$ admits a unital completely positive lift, i.e. there is a ucp map $s:A\to\mathcal{L}_B$ such that $q_B\circ s=\Psi$. Such extensions are called \emph{semisplit}. In this case, there is a faithful representation $\rho:A\to M_2(\mathcal{L}_B)\cong\mathcal{L}_B$ and an orthogonal projection $P\in M_2(\mathcal{L}_B)\cong\mathcal{L}_B$ such that $[P\,,\,\rho(a)]\in M_2(\mathcal{K}_B)$ and $\rho_{11}(a)=s(a)$ for each $a\in A$, where
\[
\rho= \begin{pmatrix}
\rho_{11} & \rho_{12}\\
\rho_{21} & \rho_{22}
\end{pmatrix};\quad P=\begin{pmatrix}
1 & 0\\
0 & 0
\end{pmatrix}\,.
\]
The pair $(\rho,P)$ is called a Stinespring dilation of $s:A\to\mathcal{L}_B\cong\mathcal{M}(\mathcal{K}\otimes B)$. We note that the existence of such a map is not automatic, unless $A$ is nuclear.

\begin{dfn}\label{Toeplitz extension}
The extension in \Eref{Toeplitz type extension} is said to be of \emph{Toeplitz type} if there exists an infinite dimensional projection $P\in\mathcal{B}(H_{A})$ such that
\begin{enumerate}[$(i)$]
\item $[P, a] \in \mathcal{K}(H_A)$,
\item $E\cong \mathcal{K}(PH_{A})\otimes B + PAP\otimes\bbc .1_B$,
\item $\mathcal{K}(PH_{A})\otimes B\,\cap\, PAP\otimes\bbc .1_B=\{0\}$.
\end{enumerate}
The tuple $(\pi_A,\pi_B,P)$ is referred to as a Toeplitz triple for the extension.
\end{dfn}
Toeplitz type extensions form a large class, see \cite[Sections $3.2\mbox{ and }6$]{HZ}.


\subsection{Toeplitz type spectral triples}\label{Sec3.2}

\begin{dfn}[\cite{HZ}]\label{differentiable compacts}
Given a spectral triple $(\mathcal{A}, \mathcal{H}, \mathcal{D})$ over $A$, the dense subalgebra of $\mathcal{D}$-differentiable compacts, denoted by $\mathcal{C(H)}$, is the algebra of all compact operators $T\in\mathcal{K}(\mathcal{H})$ such that
\begin{enumerate}[$(i)$]
\item $T(\mathrm{dom}(\mathcal{D}))\subseteq\mathrm{dom}(\mathcal{D})$;
\item the operators $T\mathcal{D}:\mathrm{dom}(\mathcal{D})\to \mathcal{H}$ and $\mathcal{D}T:\mathrm{dom}(\mathcal{D})\to \mathcal{H}$ are closable;
\item the closures, $\mathrm{cl}(T\mathcal{D})\mbox{ and }\mathrm{cl}(\mathcal{D}T)$ respectively, are bounded operators.
\end{enumerate}
\end{dfn}
Using the above conditions, one can, in fact, write an even spectral triple on the algebra of compact operators $\mathcal{K}(\mathcal{H})$:
\[
\left(\mathcal{C(H)}\,,\,\mathrm{id}\oplus 0\,,\,\begin{pmatrix}
0 & \mathcal{D}\\
\mathcal{D} & 0
\end{pmatrix}\right).
\]
Note that $\mathcal{C(H)}$ is a Banach $\ast$-algebra when equipped with the norm
\[
\|x\|_1:=\|x\|+\max\{\|x\mathcal{D}\|\,,\,\|\mathcal{D}x\|\}\,.
\]
Thus, $\mathcal{C(H)}$ plays the role of the `differentiable' elements with respect to this choice of the spectral triple.

\begin{dfn}[\cite{HZ}]\label{Toeplitz type spectral}
Let $(\mathcal{A}, \mathcal{H}, \mathcal{D})$ be a spectral triple and $P\in\mathcal{B(H)}$ be an orthogonal projection. The quadruple $(\mathcal{A}, \mathcal{H}, \mathcal{D}, P)$ is of Toeplitz type if
\begin{enumerate}[$(i)$]
\item $P\mbox{ and }\mathcal{D}$ commute,
\item $[P\,,\,\pi(a)]\in\mathcal{C(H)}$ for all $a\in\mathcal{A}$.
\end{enumerate}
A Toeplitz type quadruple is said to be $P$-injective if $\mathrm{ker}(P\mathcal{D})\cap P\mathcal{H}=\{0\}$.
\end{dfn}

The following result is useful to determine when a spectral triple induces a Toeplitz type quadruple for a given orthogonal projection.

\begin{ppsn}[\cite{HZ}, Propn. 4.3]\label{Toeplitz check}
Let $(\mathcal{A}, \mathcal{H}, \mathcal{D})$ be a spectral triple and $P\in\mathcal{B(H)}$ be an orthogonal projection commuting with $\mathcal{D}$. Then, the following are equivalent:
\begin{enumerate}[$(i)$]
\item $[P\,,\,\pi(a)]\in\mathcal{C(H)}$ for each $a\in \mathcal{A}$,
\item $[P\mathcal{D}\,,\,\pi(a)]$ and $[(1-P)\mathcal{D}\,,\,\pi(a)]$ extend to bounded operators for each $a\in \mathcal{A}$,
\item $[(2P-1)\mathcal{D}\,,\,\pi(a)]$ extends to a bounded operator for each $a\in\mathcal{A}$.
\end{enumerate}
\end{ppsn}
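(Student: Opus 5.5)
The plan is to prove $(i)\Rightarrow(ii)\Rightarrow(iii)\Rightarrow(i)$ directly. Everything rests on one algebraic identity, which uses that $P$ commutes with $\mathcal{D}$ (so $P$ preserves $\mathrm{dom}(\mathcal{D})$) and that $\pi(a)$ preserves $\mathrm{dom}(\mathcal{D})$. For $a\in\mathcal{A}$, on $\mathrm{dom}(\mathcal{D})$ we have
\[
[P\mathcal{D},\pi(a)] \;=\; P[\mathcal{D},\pi(a)]+[P,\pi(a)]\mathcal{D}\;=\;[\mathcal{D},\pi(a)]P+\mathcal{D}[P,\pi(a)].
\]
Both expressions follow by inserting $\pm P\pi(a)\mathcal{D}$, respectively $\pm\mathcal{D}\pi(a)P$, and using $P\mathcal{D}=\mathcal{D}P$. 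Note also that $[P,\pi(a)]$ maps $\mathrm{dom}(\mathcal{D})$ into itself, so $\mathcal{D}[P,\pi(a)]$ makes sense on $\mathrm{dom}(\mathcal{D})$.

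For $(i)\Rightarrow(ii)$: if $[P,\pi(a)]\in\mathcal{C(H)}$, then $[P,\pi(a)]\mathcal{D}$ is bounded on $\mathrm{dom}(\mathcal{D})$. By the first form of the identity, $[P\mathcal{D},\pi(a)]$ is then a sum of bounded operators, since $[\mathcal{D},\pi(a)]$ is bounded by the spectral triple axioms. Then $[(1-P)\mathcal{D},\pi(a)]=[\mathcal{D},\pi(a)]-[P\mathcal{D},\pi(a)]$ is bounded as well.

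For $(ii)\Rightarrow(iii)$: $(2P-1)\mathcal{D}=P\mathcal{D}-(1-P)\mathcal{D}$, so $[(2P-1)\mathcal{D},\pi(a)]$ is the difference of the two bounded commutators.

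For $(iii)\Rightarrow(i)$, which is the only step with content, write $F=2P-1$. The identity gives
\[
2[P,\pi(a)]\mathcal{D}=[F\mathcal{D},\pi(a)]-F[\mathcal{D},\pi(a)],\qquad 2\mathcal{D}[P,\pi(a)]=[F\mathcal{D},\pi(a)]-[\mathcal{D},\pi(a)]F
\]
on $\mathrm{dom}(\mathcal{D})$. The right-hand sides are bounded, so $[P,\pi(a)]\mathcal{D}$ and $\mathcal{D}[P,\pi(a)]$ are densely defined and bounded. Hence they are closable with bounded closures, which gives conditions $(ii)$ and $(iii)$ of \Dref{differentiable compacts}; condition $(i)$ there was noted above.

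The remaining point, which I expect to be the main subtlety, is compactness of $[P,\pi(a)]$, since $(iii)$ says nothing explicit about it. I would obtain it from the compact resolvent of $\mathcal{D}$: since $(\mathcal{D}+i)^{-1}$ maps $\mathcal{H}$ onto $\mathrm{dom}(\mathcal{D})$,
\[
[P,\pi(a)]=\mathrm{cl}\big([P,\pi(a)]\mathcal{D}\big)(\mathcal{D}+i)^{-1}+i\,[P,\pi(a)](\mathcal{D}+i)^{-1}.
\]
Both terms are a bounded operator times a compact one, hence compact. Therefore $[P,\pi(a)]\in\mathcal{C(H)}$, which closes the cycle of implications.
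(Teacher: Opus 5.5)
The paper gives no proof of this proposition: it is quoted from Hawkins--Zacharias (their Proposition 4.3), so there is no internal argument to compare yours against.

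Your cycle $(i)\Rightarrow(ii)\Rightarrow(iii)\Rightarrow(i)$ is a correct and complete proof.

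\begin{itemize}
\item The two forms of the identity $[P\mathcal{D},\pi(a)]=P[\mathcal{D},\pi(a)]+[P,\pi(a)]\mathcal{D}=[\mathcal{D},\pi(a)]P+\mathcal{D}[P,\pi(a)]$ hold on $\mathrm{dom}(\mathcal{D})$ under the domain conventions you state. These are $P\,\mathrm{dom}(\mathcal{D})\subseteq\mathrm{dom}(\mathcal{D})$ with $P\mathcal{D}=\mathcal{D}P$ there, and $\pi(a)\,\mathrm{dom}(\mathcal{D})\subseteq\mathrm{dom}(\mathcal{D})$.
\item The rearrangements with $F=2P-1$ check out.
\item You correctly identify compactness of $[P,\pi(a)]$ as the one ingredient not visible from $(iii)$. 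It must come from the compact resolvent, and your factorization through $(\mathcal{D}+i)^{-1}$ supplies it.
\end{itemize}

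This is the natural route, and presumably the one taken in the cited source.
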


\begin{dfn}[\cite{HZ}]
A spectral triple $(\mathcal{A}, \mathcal{H}, \mathcal{D})$ is called $P$-regular, where $P\in\mathcal{B(H)}$ is an orthogonal projection commuting with $\mathcal{D}$, if the above equivalent conditions hold (in which case, the quadruple $(\mathcal{A}, \mathcal{H}, \mathcal{D},P)$ is of Toeplitz type).
\end{dfn}

Note that all the above definitions can be recast equally in the operator system framework.


\subsection{Spectral triples for the extensions}\label{Sec3.3}

Consider a Toeplitz type extension
\[
0\longrightarrow\mathcal{K}(H_A)\otimes B\longrightarrow E\longrightarrow A\longrightarrow 0
\]
as in \Cref{Toeplitz extension}. Assume that $(\mathcal{A}, H_A, \mathcal{D}_A)$ and $(\mathcal{B}, H_B, \mathcal{D}_B)$ are spectral triples for $A$ and $B$ with faithful representations $\pi_A$ and $\pi_B$ respectively. Moreover, let $P \in B(H_A)$ be an orthogonal projection such that $(\pi_A, \pi_B, P)$ constitutes a Toeplitz triple (\Dref{Toeplitz extension}) and the quadruple $(\mathcal{A}, H_A, \mathcal{D}_A, P)$ is of Toeplitz type (\Dref{Toeplitz type spectral}).


Consider the Hilbert space $\mathcal{H}:=H_A\otimes H_B$. The representation $\pi:E\rightarrow\mathcal{B}(\mathcal{H})$ given by
\begin{align*}
\pi(T\otimes b)(\xi\otimes\eta) &:= TP(\xi)\otimes \pi_B(b)(\eta)\\
\pi(PaP\otimes 1)(\xi\otimes\eta) &:= P\pi_A(a)P(\xi)\otimes\eta
\end{align*}
where $b\in B,\,a\in A,$ and $T\in\mathcal{K}(PH_A)$, is faithful but degenerate (i.e. non-unital). We have another representation $\pi_\sigma:E\rightarrow\mathcal{B}(\mathcal{H})$ given by
\[
\pi_\sigma=\pi_{A}\circ\sigma\otimes 1
\]
where $\sigma:E\rightarrow A$ denotes the quotient map, and $A$ is viewed as a $C^*$-subalgebra of $B(H_A)$ by the faithful representation $\pi_A$. The representation $\pi_{\sigma}$ is non-degenerate (i.e. unital) but not faithful. Now consider the following representations
\[
\Pi_1\,,\,\Pi_2:E\longrightarrow B(\mathcal{H}\otimes\bbc^2)
\]
defined by
\[
\Pi_1:=\pi_\sigma\oplus\pi_\sigma\,\,\mbox{ and }\,\,\Pi_2:=\pi\oplus\pi_\sigma\,.
\]
Finally, consider the representation
\[
\Pi:E\to B\left((\mathcal{H}\otimes \bbc^2)^3\right)\quad\mbox{ defined by }\quad\Pi:= \Pi_{1} \oplus\Pi_{2} \oplus\Pi_{2}\,.
\]
Then, $\Pi$ is a faithful representation of $E$ on the Hilbert space $(\mathcal{H}\otimes \bbc^2)^3$.

Now, consider the dense unital $\ast$-subalgebra $\mathcal{E}$ of $E$ generated by
\begin{IEEEeqnarray}{lCl}\label{our domain}
\{k \otimes b\,:\,k\in \mathcal{C}(PH_{A}),\,b\in\mathcal{B}\}\quad\mbox{and}\quad\{PaP\otimes 1: a \in \mathcal{A}\}
\end{IEEEeqnarray}
and the following unbounded operators acting on $\mathcal{H}\otimes\bbc^2$, where $\mathcal{H}:=H_A\otimes H_B$,
\begin{align*}
\mathcal{D}_{1} &:= \begin{pmatrix}
D_A\otimes 1 & 1\otimes D_B\\
1\otimes D_B & -D_A\otimes 1
\end{pmatrix}\\
\mathcal{D}_{2} &:= \begin{pmatrix}
(1-P)D_A\otimes 1  & PD_A\otimes 1\\
PD_A\otimes 1 & -(1-P)D_A\otimes 1
\end{pmatrix}\\
\mathcal{D}_3 &:= 1\otimes D_B\otimes I_2
\end{align*}
each with domain $\mathbb{D}:=\mathrm{dom}(D_A)\odot\mathrm{dom}(D_B)\otimes\bbc^2$, where `$\odot$' denotes the algebraic tensor product. Next, using $\mathcal{D}_2$ and $\mathcal{D}_3$, construct the following unbounded operator
\[
\mathcal{D}_{I}:=\mathcal{D}_{2}\otimes\sigma_1+\mathcal{D}_3\otimes\sigma_2=\begin{pmatrix}
0 & \mathcal{D}_{2}-i\mathcal{D}_3\\
\mathcal{D}_{2}+i\mathcal{D}_3 & 0
\end{pmatrix}
\]
with domain $\mathbb{D}\oplus\mathbb{D}\subseteq(\mathcal{H}\otimes\bbc^2)\otimes\bbc^2$, where $\sigma_1$ and $\sigma_2$ are the standard Pauli spin matrices of order $2$. All of the above operators are essentially self-adjoint. Moreover, $\mathcal{D}_{1}$ and $\mathcal{D}_{I}$ have compact resolvents. If $\mathcal{D}_A\mbox{ and }\mathcal{D}_B$ are finitely summable, then so is $\mathcal{D}_1\mbox{ and }\mathcal{D}_I$. We refer to Lemma 4.5 in \cite{HZ} for the proof.

\begin{thm}[\cite{HZ}, Thm. 4.7]\label{2.11}
Let $(\mathcal{A},H_A,\mathcal{D}_A)$ and $(\mathcal{B},H_B,\mathcal{D}_B)$ be spectral triples over $C^*$-algebras $A\mbox{ and }B$ for a Toeplitz type extension
\[
0\longrightarrow\mathcal{K}(H_A)\otimes B\longrightarrow E\longrightarrow A\longrightarrow 0
\]
defined in \Cref{Toeplitz extension}. Then, $\big(\mathcal{E}, H_E:=(\mathcal{H}\otimes\bbc^2)^3, \mathcal{D}_E\big)$ is a spectral triple over $E$, where
\[
\mathcal{D}:=\begin{pmatrix}
\mathcal{D}_1 & 0 \\
0 & \mathcal{D}_I
\end{pmatrix}\,,
\]
and $\mathcal{E}$ is as defined in \Eref{our domain}. Moreover, the spectral dimension of this spectral triple is computed by the identity
\[
s_{0}(\mathcal{E}, H_E, \mathcal{D}_E)= s_{0}(\mathcal{A}, H_A, \mathcal{D}_A)+ s_{0}(\mathcal{B}, H_B, \mathcal{D}_B).
\]
\end{thm}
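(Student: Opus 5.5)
The statement to establish is \Tref{2.11}: that $(\mathcal{E},H_E,\mathcal{D}_E)$ is a spectral triple, together with the additivity of spectral dimension. I would work directly with the faithful representation $\Pi=\Pi_1\oplus\Pi_2\oplus\Pi_2$ and verify the axioms of \Dref{operator system spectral triple} in three steps: self-adjointness with compact resolvent, bounded commutators, and the summability count.

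\textbf{Self-adjointness and compact resolvent.} For $\mathcal{D}_1$ I would use $\mathcal{D}_1^2=(D_A^2\otimes 1+1\otimes D_B^2)\otimes I_2$. The cross terms cancel because $D_A\otimes 1$ and $1\otimes D_B$ commute and the matrix pattern is anticommuting. This exhibits $\mathcal{D}_1$ as a Dirac-type operator on the tensor product, so it is essentially self-adjoint on $\mathbb{D}$ with resolvent built from $(1+D_A^2)^{-1/2}\otimes(1+D_B^2)^{-1/2}$, which is compact.

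For $\mathcal{D}_I$, note that $\mathcal{D}_2$ is unitarily a Dirac-type combination. Since $P$ commutes with $D_A$, a direct check gives $\mathcal{D}_2^2=D_A^2\otimes 1\otimes I_2$. Moreover $\mathcal{D}_2$ commutes with $\mathcal{D}_3$, so $\mathcal{D}_I^2=(\mathcal{D}_2^2+\mathcal{D}_3^2)\otimes I_2$, and the same argument applies.

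The dimension formula then follows by counting eigenvalues. The spectrum of $D_A^2\otimes1+1\otimes D_B^2$ is $\{\lambda^2+\mu^2\}$, and for $s>s_A+s_B$ one has
\[
\mathrm{Tr}\,(1+\mathcal{D}^2)^{-s/2}\le C\,\mathrm{Tr}(1+D_A^2)^{-s_A'/2}\,\mathrm{Tr}(1+D_B^2)^{-s_B'/2},
\]
using $(1+x+y)^{-(a+b)}\le(1+x)^{-a}(1+y)^{-b}$. The reverse inequality gives divergence below $s_A+s_B$, so the spectral dimension is additive.

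\textbf{Bounded commutators.} Linearity and the Leibniz rule reduce this to generators. On $\Pi_1$ every element acts through $\pi_\sigma$, so the generator $k\otimes b$ acts by $0$, because $\sigma$ kills the ideal. The generator $PaP\otimes 1$ acts by $\pi_A(a)\otimes1\otimes I_2$, since $\sigma(PaP\otimes 1)=a$. Then $[\mathcal{D}_1,\pi_A(a)\otimes 1]$ has entries $\pm[D_A,a]\otimes1$ and $0$, which are bounded.

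On $\Pi_2=\pi\oplus\pi_\sigma$ the two diagonal entries differ, and this is the main obstacle. Consider first $[\mathcal{D}_3,\cdot\,]$. It is harmless: it gives $k\otimes[D_B,b]$ on the $\pi$-part, and $0$ on the $\pi_\sigma$-part since $1\otimes D_B$ commutes with $a\otimes 1$.

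For $[\mathcal{D}_2,\cdot\,]$ take $x=\mathrm{diag}(\pi(y),\pi_\sigma(y))$. The off-diagonal entries are $PD_A\pi(y)-\pi_\sigma(y)PD_A$ and its mirror. For $y=PaP\otimes1$ I would write
\[
PD_A\,PaP-aPD_A=P[D_A,a]P-(1-P)aP D_A .
\]
The term $(1-P)aP=[a,P]P$ lies in $\mathcal{C}(H_A)$ by \Dref{Toeplitz type spectral}(ii), so $(1-P)aPD_A$ is bounded. For $y=k\otimes b$ with $k\in\mathcal{C}(PH_A)$, the entries $PD_Ak$ and $kPD_A$ are bounded by \Dref{differentiable compacts}. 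The diagonal entries are handled the same way: $(1-P)D_A$ kills $P$-supported terms on the $\pi$-side, and on the $\pi_\sigma$-side one uses \Pref{Toeplitz check}(ii) for $[(1-P)D_A,a]$. This is exactly where $P$-regularity enters.

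Finally, I would check that $\Pi(\mathcal{E})$ preserves the domain. This follows because $\mathcal{C}(PH_A)$ maps $\mathrm{dom}(D_A)$ into itself, $\mathcal{A}$ preserves $\mathrm{dom}(D_A)$, and $P$ commutes with $D_A$.
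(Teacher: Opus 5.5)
Your verification of the spectral triple axioms is essentially fine. The identities $\mathcal{D}_1^2=(D_A^2\otimes 1+1\otimes D_B^2)\otimes I_2$ and $\mathcal{D}_I^2=(\mathcal{D}_2^2+\mathcal{D}_3^2)\otimes I_2=(D_A^2\otimes 1+1\otimes D_B^2)\otimes I_4$ are correct. Your generator-by-generator commutators agree with the formulas displayed at the start of \Sref{Sec6}; the paper itself gives no proof and simply defers to Hawkins--Zacharias.

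Two small repairs are needed there:
\begin{itemize}
\item Since $(1+x+y)^{-1/2}\ge(1+x)^{-1/2}(1+y)^{-1/2}$, the product $(1+D_A^2)^{-1/2}\otimes(1+D_B^2)^{-1/2}$ does not dominate the resolvent. Compactness, and essential self-adjointness on $\mathbb{D}$, are better read off from the joint eigenbasis: the eigenvalues are $\pm\sqrt{\lambda_i^2+\mu_j^2}$, with finite multiplicities, tending to $\infty$.
\item Invariance of the core $\mathbb{D}$ must be upgraded to invariance of the domain of the closure by the usual graph-norm argument.
\end{itemize}

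The genuine gap is the lower bound $s_0(\mathcal{E},H_E,\mathcal{D}_E)\ge s_A+s_B$. There is no reverse inequality $(1+x+y)^{-(a+b)}\ge c\,(1+x)^{-a}(1+y)^{-b}$; let $x\to\infty$ with $y=0$ and $b>0$. The true reverse estimate $1+x+y\le(1+x)(1+y)$ only gives $\mathrm{Tr}(1+\mathcal{D}_E^2)^{-s/2}\ge\mathrm{Tr}(1+D_A^2)^{-s/2}\,\mathrm{Tr}(1+D_B^2)^{-s/2}$. That is divergence only for $s<\max\{s_A,s_B\}$, which for two circles gives $1$ rather than $2$.

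What is needed is a counting estimate. Put $N_A(R)=\#\{i:|\lambda_i|\le R\}$ and define $N_B$ likewise. Then $\mathrm{Tr}(1+\mathcal{D}_E^2)^{-s/2}\ge N_A(R)N_B(R)(1+2R^2)^{-s/2}$. Since $s_A=\limsup_{R\to\infty}\log N_A(R)/\log R$, this forces divergence for $s<\limsup_{R\to\infty}\bigl(\log N_A(R)+\log N_B(R)\bigr)/\log R$.

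That limsup equals $s_A+s_B$ only if both exponents approach their limsups along a common sequence. This holds, for example, if one of $\log N_A(R)/\log R$ and $\log N_B(R)/\log R$ actually converges (Weyl-type regular growth, as in all the paper's examples).

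Without such a hypothesis the identity can fail. Take $A=B=\bbc$ acting by scalars on $H_A=H_B=\ell^2(\bbn)$ and $P=1$, so $E$ is the unitization of the compacts. Choose diagonal $D_A$, $D_B$ whose counting functions oscillate out of phase, so that $\log N_A(R)+\log N_B(R)=3\log R+O(1)$ while each exponent has limsup $2$. Then $s_0(\mathcal{E},H_E,\mathcal{D}_E)\le 3<4$; this is the same phenomenon as the non-additivity of upper box dimension for products.

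So as written your argument proves only $\max\{s_A,s_B\}\le s_0\le s_A+s_B$. The stated equality needs the counting argument above together with an extra regularity assumption on at least one of the two spectra.
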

\noindent\textbf{Notation:} Throughout the article, we reserve the notation $(\mathcal{E}, H_E,\mathcal{D}_E)$ for the spectral triple over $E$ obtained by the Hawkins--Zacharias construction discussed above.
\smallskip

Recall \Dref{Toeplitz type spectral}. We have the following result from \cite{HZ}.

\begin{thm}[\cite{HZ}, Thm. 5.7]\label{3.9}
Consider a Toeplitz type extension
\[
0\longrightarrow\mathcal{K}(H_A)\otimes B\longrightarrow E\longrightarrow A\longrightarrow 0
\]
as in \Cref{Toeplitz extension}. Let $(\mathcal{A}, H_A, \mathcal{D}_A)$ and $(\mathcal{B}, H_B, \mathcal{D}_B)$ be spectral triples over $A\mbox{ and }B$ respectively such that there is an orthogonal projection $P\in \mathcal{B}(H_A)$ that makes the quadruple $(\mathcal{A}, H_A, \mathcal{D}_A, P)$ a Toeplitz type and $P$-injective. If the spectral triples $(\mathcal{A}, H_A, \mathcal{D}_A)$ and $(\mathcal{B}, H_B, \mathcal{D}_B)$ satisfy Rieffel's metric condition, then so does the spectral triple $(\mathcal{E}, H_E, \mathcal{D}_E)$ over $E$. In other words, if $(A, L_{\mathcal{D}_A})\mbox{ and }(B, L_{\mathcal{D}_B})$ are compact quantum metric spaces, then so is $(E, L_{\mathcal{D}_E})$.   
\end{thm}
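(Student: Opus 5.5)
The plan is to verify the hypotheses of Rieffel's criterion (Theorem of \cite{Rie1} recalled in \Sref{Sec2.1}) for $L_E:=L_{\mathcal{D}_E}$ on $E$. That means checking nondegeneracy, finite diameter, and total boundedness of the image of $\{x:\ L_E(x)\le 1,\ \|x\|\le 1\}$ in $E/\bbc$. Every $x\in\mathcal{E}$ decomposes as $x=y+PaP\otimes 1$ with $a=\sigma(x)\in\mathcal{A}$ and $y$ in the algebraic span of $\mathcal{C}(PH_A)\odot\mathcal{B}$. Since $\mathcal{D}_E$ is block diagonal, $L_E(x)$ dominates the norm of each block: $\|[\mathcal{D}_1,\Pi_1(x)]\|$ and $\|[\mathcal{D}_I,\Pi_2(x)\oplus\Pi_2(x)]\|$. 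I would extract three estimates from these blocks, with a constant $C$ independent of $x$.
\begin{itemize}
\item[(a)] $L_A(a)\le C\,L_E(x)$. This should be immediate from $\mathcal{D}_1$, since $\Pi_1=\pi_\sigma\oplus\pi_\sigma$ and the diagonal entries of $[\mathcal{D}_1,\Pi_1(x)]$ are $\pm[D_A,a]\otimes 1$.
\item[(b)] $\|(PD_A\otimes 1)\,y\|\le C\,L_E(x)$, and the same for $y^*$ (using $L_E(x^*)=L_E(x)$).
\item[(c)] A bound on $\|[1\otimes D_B,y]\|$ by $C\,L_E(x)$. This should come from $\mathcal{D}_3$ acting on the $\pi$-summand of $\Pi_2$, on which $PaP\otimes1$ commutes with $1\otimes D_B$.
\end{itemize}

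The main obstacle is (b). The off-diagonal entry of $[\mathcal{D}_2,\Pi_2(x)]$ is
\[
(PD_A\otimes1)\,\pi(x)-\pi_\sigma(x)\,(PD_A\otimes1)=(PD_A\otimes1)\,y+\big(PD_AP\,a-aPD_A\big)\otimes 1 ,
\]
and the second term is not obviously controlled by $L_E(x)$ alone. To handle it, I would combine this entry with the diagonal entries, which involve $(1-P)D_A$, and with the $\mathcal{D}_1$ block. These together control $[D_A,a]$ and $[(1-P)D_A,\cdot]$, and I would use the decomposition of \Pref{Toeplitz check} ($[PD_A,a]=[D_A,a]-[(1-P)D_A,a]$) to rewrite $PD_APa-aPD_A$ as $P[PD_A,a]$ plus a term $[P,a]PD_A$. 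The term $[P,a]PD_A$ should cancel against the corresponding piece coming from the $\pi_\sigma$-summand of $\Pi_2$. This bookkeeping is where I expect the real work; if a clean cancellation fails, I would aim for the weaker bound $\|(PD_A\otimes1)y\|\le C(L_E(x)+L_A(a))$, which still suffices by (a).

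Granting (a)--(c), $P$-injectivity makes $T:=(PD_A|_{PH_A})^{-1}$ compact. Hence $y=(T\otimes1)(PD_A\otimes1)y$ gives $\|y\|\le C'L_E(x)$. For finite diameter, use that $A$ is a CQMS to pick $\lambda$ with $\|a-\lambda\|\le r_A L_A(a)$; then $\|x-\lambda\|\le\|a-\lambda\|+\|y\|\le C''L_E(x)$. The same estimate gives nondegeneracy: $L_E(x)=0$ forces $y=0$ and $a\in\bbc$.

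For total boundedness, let $Q_n$ be the spectral projections of $T$. Then
\[
\|(1-Q_n)y\|\le\|T(1-Q_n)\|\,C\,L_E(x)\longrightarrow 0
\]
uniformly on the Lip-ball, and likewise for $y(1-Q_n)$ via $y^*$. So up to $\varepsilon$, $y$ is replaced by $Q_nyQ_n\in M_{d_n}(B)$. Its matrix entries are norm-bounded by $\|y\|$ and have $L_B$ bounded by (c), so they lie in the totally bounded set $\{b:\ L_B(b)\le C,\ \|b\|\le C\}$ of $B$ (Rieffel's criterion for $B$). The $a$-component ranges over a set that is totally bounded modulo scalars, by the CQMS property of $A$. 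The map $a\mapsto PaP\otimes 1$ is contractive, so the image in $E/\bbc$ is covered by finitely many $\varepsilon$-balls, completing the verification.
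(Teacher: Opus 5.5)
Your argument is correct and is essentially the Hawkins--Zacharias proof that the paper cites without reproving. The paper quotes the key estimates from Lemma 5.4 of \cite{HZ}: for $L_{\mathcal{D}_E}(e)\le 1$ one has $\|[\mathcal{D}_A,a]\|\le 1$, $\|(P\mathcal{D}_A\otimes 1)x\|\le 3$ and $\|x(P\mathcal{D}_A\otimes 1)\|\le 3$. It then reuses, in \Pref{3.6}, the same truncation by the spectral projections $Q_n$ of the compact operator $(P\mathcal{D}_A|_{PH_A})^{-1}$ that you use.

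The step you flag as the main obstacle in (b) needs no cancellation. The extra term in that entry is $[P\mathcal{D}_A,a]P\otimes 1$; you dropped the trailing $P$, but your version equals $[P\mathcal{D}_A,a]$, so this does not matter. The $\mathcal{D}_1$ block and the $(2,2)$ entry of the $\mathcal{D}_I$ block give $\|[P\mathcal{D}_A,a]\|\le\|[\mathcal{D}_A,a]\|+\|[(1-P)\mathcal{D}_A,a]\|\le 2L_{\mathcal{D}_E}(x)$, which yields (b) with constant $3$.
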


Note that the condition of $P$-injectivity in the above theorem is a mild hypothesis, see discussion just before \cite[Remark $5.2$]{HZ}. We also remind that every unital $C^*$-algebra is by definition a complete operator system, therefore the main outcomes of the Hawkins--Zacharias construction (\Tref{2.11}, \ref{3.9}) fit into the operator system framework adopted throughout this paper.


\newsection{Unital $2$-contractive approximation of Toeplitz type}\label{Sec4}

Consider a Toeplitz type extension
\[
0\longrightarrow\mathcal{K}(H_A)\otimes B\longrightarrow E\longrightarrow A\longrightarrow 0
\]
as defined in \Cref{Toeplitz extension} and assume that \Tref{3.9} is at our disposal. In this section, we introduce an approximation notion that serves as the fundamental tool for analyzing the Lipschitz seminorms arising from the Hawkins--Zacharias spectral triples.

Recall that if $\mathcal{X}\subseteq A$ is an operator system and $B$ is a $C^*$-algebra, a linear map $\phi:\mathcal{X}\to B$ extends to $\phi_n:M_n(\mathcal{X})\to M_n(B)$ by $\phi_n\big((x_{ij})\big):=\big(\phi(x_{ij})\big)$. We call $\phi$ $n$-contractive if $\phi_n$ is contractive, i.e. $\|\phi_n\|\le 1$, and we call $\phi$ completely contractive if it is $n$-contractive for all $n$. See \cite{Paul} for more details.

Our first notion is that of a unital $2$-contractive approximation which is for general compact quantum metric spaces arising from spectral triples.
\begin{dfn}\label{def 1}
Let $(\mathcal{X}, H_X, \mathcal{D}_X)$ be a spectral triple for a complete operator system $X$ such that $(X, L_{\mathcal{D}_{X}})$ is a compact quantum metric space. Suppose that we have a sequence of complete sub-operator systems $\{X_n\}_{n\in\bbn}$ of $X$. A unital $2$-contractive approximation associated to the pair $(X,\{X_n\}_{n\in\bbn})$ is a sequence of unital $2$-contractive maps $\sigma_n:B(H_X) \longrightarrow B(H_X)$ satisfying the following:
\begin{enumerate}[$(i)$]
\item For each $n\in\bbn,\,\sigma_n(\mathcal{X}) \subseteq X_n \cap\mathcal{X} ;$
\item For all $\varepsilon>0$, there exists $N_\varepsilon\in\bbn$ such that $\|x-\sigma_n(x)\|\leq\varepsilon\, L_{\mathcal{D}_X}(x)$ for any $n\geq N_\varepsilon\mbox{ and } \forall\, x\in \mathcal{X};$
\item There exists a unitary $u\in B(H_X)$ such that $$\sigma_n(uxu^*)=u\sigma_n(x)u^*\quad\mbox{ and }\quad u[\mathcal{D}_X, \sigma_n(x)]u^*= \sigma_n(u[\mathcal{D}_X, x]u^*)$$
for all $\,x\in\mathcal{X}\mbox{ and } \forall\, n \geq 1$.
\end{enumerate}
\end{dfn}

\begin{rmrk}\rm
Unital $2$-contractive maps acting on $B(\mathcal{H})$ are not necessarily ucp maps, even if $\mathcal{H}$ is infinite dimensional (in fact, for every $n$ there are unital $n$-contractive maps that are not ucp\footnote{Thanks to Vern Paulsen for providing us explicit examples}), and hence the notion of $2$-contractive approximation defined in \Dref{def 1} is weaker than a similar notion of ucp approximation, while remaining sufficient for our purposes.
\end{rmrk}

\begin{xmpl}\rm\label{example} Consider $X=C(S^1)\subseteq B(L^2(S^1))$ with the canonical spectral triple $$\left(\mathcal{X}:=C^\infty(S^1)\,,\,L^2(S^1)\,,\,-i\frac{d}{d\theta}\right)$$ on it. For each $k\in\bbz$, consider $e_k:z\mapsto z^k,\,\forall\,z\in S^1$. For each $n\in\bbn$, define the following finite-dimensional subspace of $C(S^1)$:
\begin{IEEEeqnarray}{lCl}\label{triviality}
X_n:=\mathrm{span}\{e_k\,:k\in\bbz,\,|k|\leq n\}\,.
\end{IEEEeqnarray}
Each $X_n$ is unital $\ast$-invariant closed subspace of $X$, and therefore a complete sub-operator system of $X$. For each $n\in\bbn$, define $\,\sigma_n:C(S^1)\subseteq B(L^2(S^1))\longrightarrow B(L^2(S^1))$ by $\sigma_n(f):=F_n\star f$ for $f\in C(S^1)$, where $F_n(x):=\frac{\sin^2(nx/2)}{n\sin^2(x/2)}$ denotes the $n$-th Fej\'er kernel. These maps are ucp (see \cite[Thm 3.11]{Paul}), and hence extend to all of $B(L^2(S^1))$ by Arveson's extension theorem. The condition $(i)$ in \Cref{def 1} is automatically satisfied by \Eref{triviality}, and by Lemma $10$ in \cite{S}, we see that condition $(ii)$ is also satisfied. The unitary $u$ in condition $(iii)$ is $\mathrm{Id}\in B(L^2(S^1))$ in this case, and therefore the first part in condition $(iii)$ is trivially satisfied. Finally, the second part of the condition $(iii)$ with $u=\mathrm{Id}$ is satisfied by the fact that $\sigma_n(f^\prime)=\sigma_n(f)^\prime$ for any $f\in C^\infty(S^1)$ and for each $n\in\bbn$. Therefore, $\{\sigma_n\}_{n\in\bbn}$ determines a unital $2$-contractive approximation associated to the pair $(X,\{X_n\}_{n\in\bbn})$. 
\end{xmpl}

The above example extends to $C(\mathbb{T}^d)$; we postpone the details to \Sref{Sec6.3}.
\smallskip

While \Dref{def 1} applies to any spectral metric spaces, Toeplitz type extensions involve additional structure. The following refinement encodes precisely the extra structure required for lifting approximation through the Hawkins--Zacharias construction. Recall \Dref{differentiable compacts} and \ref{Toeplitz type spectral} in this regard.

\begin{dfn}\label{def 2}
Consider a Toeplitz type extension \Eref{Toeplitz type extension} and let $(\mathcal{A}, H_{A}, \mathcal{D}_{A},P)$ be a Toeplitz type quadruple over $A$ which is $P$-injective. Suppose that we have a sequence of complete sub-operator systems $\{A_n\}_{n\in\bbn}$ of $A$. A unital $2$-contractive approximation of Toeplitz type associated to the pair $(A,\{A_n\}_{n\in\bbn})$ is a sequence of unital $2$-contractive maps $\,\beta_n: B(H_{A}) \longrightarrow B(H_{A})$ satisfying the following:
\begin{enumerate}[$(i)$]
\item For each $n\ge 1,\,\beta_n(\mathcal{A}) \subseteq A_n\cap\mathcal{A} ;$
\item For all $\,\varepsilon>0$, there exists $\,N\in \bbn$ such that $\,\|a-\beta_n(a)\| \leq \varepsilon L_{\mathcal{D}_{A}}(a),\,\forall\, n\geq N\mbox{ and } \forall \,a \in \mathcal{A};$
\item There exists a unitary $v\in B(H_A)$ such that $v(Q_nP)=(Q_nP)v$ for all $n\in\bbn$, where $Q_n=\sum_{k=1}^n\,P_k$ and $P_k$'s are the spectral projections of $(P\mathcal{D}_{A}|_{PH_{A}})^{-1}$, and moreover the following conditions are satisfied:
\begin{enumerate}[$(a)$]
\item $\beta_n\big(\mathrm{Ad}_v([P\mathcal{D}_{A}\,,\,a])\big)=\mathrm{Ad}_v([P\mathcal{D}_{A}\,,\ \beta_n(a)])\,$ and $\,\beta_n\big(\mathrm{Ad}_v([(1-P)\mathcal{D}_{A}\,,\,a])\big)=\mathrm{Ad}_v([(1-P)\mathcal{D}_{A}\,,\,\beta_n(a)])$, for all $a\in\mathcal{A};$
\item $\beta_n(xP)=\beta_n(x)P$ and $\,\beta_n(Px)=P\beta_n(x)$, for all $x \in \mathcal{A} \cup \{\mathrm{Ad}_v([P\mathcal{D}_A\,,\,a]): a\in \mathcal{A}\};$
\item $\beta_n(TP)= Q_nTQ_nP$, for all $T \in \mathcal{C}(PH_{A});$ 
\item $\beta_n\big(v\big(\mathrm{cl}(TP\mathcal{D}_{A})\big)v^*\big)=v\big(\mathrm{cl}\big(\beta_n(TP)\mathcal{D}_{A}\big)\big)v^*$ and $\,\beta_n\big(v\big(\mathrm{cl}(\mathcal{D}_{A}TP)\big)v^*\big)=\\v\big(\mathrm{cl}\big(\mathcal{D}_{A}\beta_n(TP)\big)\big)v^*$, for all $T\in\mathcal{C}(PH_{A})$;
\end{enumerate}
where $\mathrm{Ad}_v(S):=vSv^*$ for any $S\in B(H_A)$.
\end{enumerate}
\end{dfn}
Observe that the condition $v(Q_nP)=(Q_nP)v$ implies that $vP=Pv,$ because $Q_n$ converges to $P$ in the strong operator topology.

Note that the above definition can also be recast equally in the operator system language. Also, with a slight modification in the Hawkins--Zacharias construction, the above approximation notion can be simplified. More precisely, condition $(d)$ in point $(iii)$ of \Dref{def 2} follows automatically if we modify the Hawkins--Zacharias construction slightly. We discuss it below.

Recall the subalgebra $\mathcal{C}(\mathcal{H})$ of $\mathcal{D}$-differentiable compacts associated with a spectral triple $(\mathcal{A},\mathcal{H},\mathcal{D})$ (\Dref{differentiable compacts}). 
\begin{dfn}
Given a spectral triple $(\mathcal{A},\mathcal{H},\mathcal{D})$, the subalgebra of continuously $\mathcal{D}$-differentiable compacts, denoted by $\mathcal{C}^1(\mathcal{H})$, consists of all $\mathcal{D}$-differentiable compacts $T\in\mathcal{C}(\mathcal{H})\subseteq\mathcal{K}(\mathcal{H})$ such that the closures $\mathrm{cl}(T\mathcal{D})\mbox{ and }\mathrm{cl}(\mathcal{D}T)$ are compact operators.
\end{dfn}

\begin{ppsn}\label{reqd density}
$\mathcal{C}^1(\mathcal{H})$ is dense in $\mathcal{K}(\mathcal{H})$.
\end{ppsn}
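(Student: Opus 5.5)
Since $\mathcal{D}$ is self-adjoint with compact resolvent, $\mathcal{H}$ has an orthonormal basis $\{e_j\}_{j\in\bbn}$ of eigenvectors, $\mathcal{D}e_j=\lambda_je_j$. The plan is to show that every rank-one operator $\theta_{\xi,\eta}:=\langle\cdot,\eta\rangle\xi$ with $\xi,\eta$ finite linear combinations of the $e_j$ lies in $\mathcal{C}^1(\mathcal{H})$. Finite sums of such operators are norm dense in the finite-rank operators, and hence in $\mathcal{K}(\mathcal{H})$. Since $\mathcal{C}^1(\mathcal{H})$ is a linear subspace, this gives the claim.

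Fix $\xi,\eta\in\mathrm{span}\{e_j\}$ and put $T:=\theta_{\xi,\eta}$. We check the three conditions of \Dref{differentiable compacts}.
\begin{enumerate}[$(i)$]
\item The range of $T$ is $\bbc\xi\subseteq\mathrm{dom}(\mathcal{D})$, so $T(\mathrm{dom}(\mathcal{D}))\subseteq\mathrm{dom}(\mathcal{D})$.
\item On $\mathrm{dom}(\mathcal{D})$ we have $\mathcal{D}T\zeta=\langle\zeta,\eta\rangle\mathcal{D}\xi$, so $\mathcal{D}T$ is the restriction of the bounded rank-one operator $\theta_{\mathcal{D}\xi,\eta}$.
\item For $\zeta\in\mathrm{dom}(\mathcal{D})$, self-adjointness and $\eta\in\mathrm{dom}(\mathcal{D})$ give
\[
T\mathcal{D}\zeta=\langle\mathcal{D}\zeta,\eta\rangle\xi=\langle\zeta,\mathcal{D}\eta\rangle\xi,
\]
so $T\mathcal{D}$ is the restriction of $\theta_{\xi,\mathcal{D}\eta}$.
\end{enumerate}
Bounded operators defined on a dense domain are closable, and their closures are these bounded rank-one operators. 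Hence $T\in\mathcal{C}(\mathcal{H})$. Both $\mathrm{cl}(T\mathcal{D})$ and $\mathrm{cl}(\mathcal{D}T)$ have rank one, so they are compact, and $T\in\mathcal{C}^1(\mathcal{H})$.

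For the density step, take a finite-rank operator $F=\sum_{i=1}^m\theta_{x_i,y_i}$ and approximate each $x_i,y_i$ in norm by truncations of their eigenbasis expansions. The estimate
\[
\|\theta_{x,y}-\theta_{x',y'}\|\le\|x-x'\|\,\|y\|+\|x'\|\,\|y-y'\|
\]
shows that $F$ is a norm limit of elements of $\mathcal{C}^1(\mathcal{H})$. Since finite-rank operators are dense in $\mathcal{K}(\mathcal{H})$, this finishes the argument.

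The only point needing care is (iii). The adjoint identity $\langle\mathcal{D}\zeta,\eta\rangle=\langle\zeta,\mathcal{D}\eta\rangle$ holds only because $\eta\in\mathrm{dom}(\mathcal{D})$; this is why the vectors are chosen in the span of eigenvectors rather than arbitrary. If one wants to avoid the eigenbasis, $\mathrm{dom}(\mathcal{D})$ itself, which is dense, works equally well.
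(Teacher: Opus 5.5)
Your proof is correct, and it takes a different route from the paper's.

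The paper works with the spectral projections $\widetilde{Q}_n$ of $\mathcal{D}$ onto $\mathrm{span}\{e_k:|\lambda_k|\le n\}$. It shows that for $T\in\mathcal{C}(\mathcal{H})$ the compressions $\widetilde{Q}_nT\widetilde{Q}_n$ lie in $\mathcal{C}^1(\mathcal{H})$ and converge to $T$ in norm. It then invokes the density of $\mathcal{C}(\mathcal{H})$ in $\mathcal{K}(\mathcal{H})$, quoted from Hawkins--Zacharias.

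You bypass $\mathcal{C}(\mathcal{H})$ entirely. You check directly that the rank-one operators $\theta_{\xi,\eta}$ with $\xi,\eta\in\mathrm{dom}(\mathcal{D})$ lie in $\mathcal{C}^1(\mathcal{H})$, using only the symmetry of $\mathcal{D}$ to handle $T\mathcal{D}$. You then conclude from the norm density of finite-rank operators in $\mathcal{K}(\mathcal{H})$.

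\textbf{What your route buys.} It is fully self-contained, with no external density result. In its $\mathrm{dom}(\mathcal{D})$ form it does not even use the compact resolvent, only that $\mathcal{D}$ is densely defined and symmetric.

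\textbf{What the paper's route buys.} It gives an explicit, uniform approximation scheme by spectral compressions. This is the same mechanism as the compressions $Q_n(\cdot)Q_n$ used later to define $E_n$ and in the condition $\beta_n(TP)=Q_nTQ_nP$.

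Incidentally, $\mathcal{D}\widetilde{Q}_n$ is bounded of finite rank, so the paper's Step 1 goes through for an arbitrary compact $T$. The appeal to Hawkins--Zacharias could therefore have been avoided there as well.
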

\begin{prf}
Since $(\mathcal{A}, \mathcal{H}, \mathcal{D})$ is a spectral triple,  $\mathcal{H}$ has an orthonormal eigenbasis $\{e_k\}_{k=1}^\infty$ for $\mathcal{D}$ with corresponding real eigenvalues $\{\lambda_k:k\in\bbn\}$ satisfying $|\lambda_k| \to \infty$. For each $n \in\bbn$, define $\widetilde{Q}_n$ to be the orthogonal projection onto the finite-dimensional spectral subspace
$$V_n := \mathrm{span}\{e_k : |\lambda_k| \leq n\} \subseteq \mathcal{H}.$$
Then, each $V_n$ is finite-dimensional, so $\widetilde{Q}_n$'s are finite-rank projections. Moreover, $V_n \subseteq \mathrm{Dom}(\mathcal{D})$ for each $n$, $\widetilde{Q}_n\mathcal{D}\xi = \mathcal{D}\widetilde{Q}_n\xi$ for all $\xi \in \mathrm{Dom}(\mathcal{D})$, and $\widetilde{Q}_n \to I$ strongly on $\mathcal{H}$. It is easy to see that each $\widetilde{Q}_n \in \mathcal{C}^1(\mathcal{H})$. We show that $\mathcal{C}^1(\mathcal{H})$ is dense in $\mathcal{C}(\mathcal{H})$. For this, take any $T \in \mathcal{C}(\mathcal{H})\subseteq\mathcal{K}(\mathcal{H})$ and define $T_n := \widetilde{Q}_nT\widetilde{Q}_n$ for each $n\in\bbn$.
\smallskip

\noindent\textbf{Step 1:} $T_n \in \mathcal{C}^1(\mathcal{H})$ for each $n\in\bbn$.
\smallskip

Fix any $n\in\bbn$. Observe that $T_n(\mathrm{Dom}(\mathcal{D}))\subseteq \mathrm{Dom}(\mathcal{D})$. For $\xi \in \mathrm{Dom}(\mathcal{D})$, we have
$$T_n\mathcal{D}\xi = \widetilde{Q}_nT\widetilde{Q}_n\mathcal{D}\xi = \widetilde{Q}_nT\mathcal{D}\widetilde{Q}_n\xi 
= \widetilde{Q}_n\,\mathrm{cl}(T\mathcal{D})\,\widetilde{Q}_n\xi,$$
where the last equality holds since $\widetilde{Q}_n\xi \in V_n \subseteq 
\mathrm{Dom}(\mathcal{D})$, so $\mathcal{D}\widetilde{Q}_n\xi$ 
is defined, and $\mathrm{cl}(T\mathcal{D})$ is the bounded extension of 
$T\mathcal{D}$ to all of $\mathcal{H}$. Therefore, $T_n\mathcal{D}$ extends to the bounded operator $\widetilde{Q}_n\,\mathrm{cl}(T\mathcal{D})\,\widetilde{Q}_n$ on $\mathcal{H}$. For $\mathcal{D}T_n$, take $\xi \in \mathrm{Dom}(\mathcal{D})$. Since $\widetilde{Q}_n\xi \in V_n \subseteq \mathrm{Dom}(\mathcal{D})$, and $T \in \mathcal{C}(\mathcal{H})$, we have $T\widetilde{Q}_n\xi \in \mathrm{Dom}(\mathcal{D})$. Hence $\mathcal{D}T\widetilde{Q}_n\xi$ 
is well defined, and
$$\mathcal{D}T_n\xi = \mathcal{D}\widetilde{Q}_nT\widetilde{Q}_n\xi = \widetilde{Q}_n\mathcal{D}T\widetilde{Q}_n\xi 
= \widetilde{Q}_n\,\mathrm{cl}(\mathcal{D}T)\,\widetilde{Q}_n\xi,$$
where the last equality holds since $\mathrm{cl}(\mathcal{D}T)$ is the 
bounded extension of $\mathcal{D}T$ to all of $\mathcal{H}$. Therefore, $\mathcal{D}T_n$ extends to the bounded operator $\widetilde{Q}_n\,\mathrm{cl}(\mathcal{D}T)\,\widetilde{Q}_n$ 
on $\mathcal{H}$. Since $\widetilde{Q}_n$ has finite rank, we get $T_n\in\mathcal{C}^1(\mathcal{H})$, which completes this step.
\medskip

\noindent\textbf{Step 2:} $\|T_n - T\| \to 0$.
\smallskip

First, we have
\begin{IEEEeqnarray}{lCl}\label{00}
\|T_n - T\| = \|\widetilde{Q}_nT\widetilde{Q}_n - T\|\leq\|\widetilde{Q}_nT(\widetilde{Q}_n - I)\|+\|(\widetilde{Q}_n - I)T\|.
\end{IEEEeqnarray}
Since $\widetilde{Q}_n-I\to 0$ strongly with $\sup_n\|\widetilde{Q}_n - I\| \leq 2$, and $T$ is a compact operator, we have that $\|(\widetilde{Q}_n - I)T\| \to 0$ as $n\to\infty$. To show that $\|\widetilde{Q}_nT(\widetilde{Q}_n - I)\| \to 0$ as $n\to\infty$, use the same reasoning together with the following fact $$\|\widetilde{Q}_nT(\widetilde{Q}_n - I)\| \leq \|T(\widetilde{Q}_n - I)\| = \|(\widetilde{Q}_n - I)T^*\|.$$
By \Eref{00}, now it follows that $\|T_n - T\| \to 0$, completing Step $2$.

Therefore, $\mathcal{C}^1(\mathcal{H})$ is dense in $\mathcal{C}(\mathcal{H})$. By the density of $\mathcal{C}(\mathcal{H})$ in $\mathcal{K}(\mathcal{H})$ \cite{HZ}, it follows that $\mathcal{C}^1(\mathcal{H})$ is dense in $\mathcal{K}(\mathcal{H})$.\qed
\end{prf}

Now, recall the definition of the unital dense $\ast$-subalgebra $\mathcal{E}$ of $E$ given in \Eref{our domain} in the Hawkins--Zacharias construction described in \Sref{Sec3.3}. Consider the unital $\ast$-subalgebra $\mathcal{E}^1$ of $E$ generated by
\begin{IEEEeqnarray}{lCl}\label{modified domain}
\{k \otimes b\,:\,k\in \mathcal{C}^1(PH_{A}),\,b\in\mathcal{B}\}\quad\mbox{and}\quad\{PaP\otimes 1: a \in \mathcal{A}\}\,.
\end{IEEEeqnarray}
Obviously, $\mathcal{E}^1\subseteq\mathcal{E}\subseteq E$ by construction since $\mathcal{C}^1(PH_A)\subseteq\mathcal{C}(PH_A)$.

\begin{ppsn}
The Hawkins--Zacharias construction remains valid if one replaces $\mathcal{E}$ by $\mathcal{E}^1$ defined in \Eref{modified domain}.
\end{ppsn}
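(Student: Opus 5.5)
The plan is to go through the Hawkins--Zacharias construction step by step and check that every place where $\mathcal{E}$ enters only uses properties that $\mathcal{E}^1$ also has. There are three such properties: density in $E$, the smoothness of the generators (bounded commutators with $\mathcal{D}_E$ and preservation of its domain), and the metric argument behind \Tref{3.9}.

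\textbf{Density of $\mathcal{E}^1$ in $E$.} By \Pref{reqd density}, applied to the spectral triple obtained by restricting $\mathcal{D}_A$ to $PH_A$ (legitimate because $P$ commutes with $\mathcal{D}_A$), $\mathcal{C}^1(PH_A)$ is dense in $\mathcal{K}(PH_A)$. Since $\mathcal{B}$ is dense in $B$, the algebraic tensor product $\mathcal{C}^1(PH_A)\odot\mathcal{B}$ is dense in $\mathcal{K}(PH_A)\otimes B$. Since $\mathcal{A}$ is dense in $A$, the set $\{PaP\otimes 1:a\in\mathcal{A}\}$ is dense in $PAP\otimes\bbc$. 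By \Dref{Toeplitz extension}(ii), $E=\mathcal{K}(PH_A)\otimes B+PAP\otimes\bbc$, so $\mathcal{E}^1$ is dense in $E$. It is a unital $\ast$-subalgebra by definition, because $\mathcal{C}^1(PH_A)$ is $\ast$-closed: $\mathrm{cl}(T^*\mathcal{D})$ and $\mathrm{cl}(\mathcal{D}T^*)$ are adjoints of $\mathrm{cl}(\mathcal{D}T)$ and $\mathrm{cl}(T\mathcal{D})$, hence compact.

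\textbf{Spectral triple and spectral dimension.} Since $\mathcal{E}^1\subseteq\mathcal{E}$, every element of $\mathcal{E}^1$ preserves $\mathrm{dom}(\mathcal{D}_E)$ and has bounded commutator with $\mathcal{D}_E$, by \Tref{2.11}. The operator $\mathcal{D}_E$, its compact resolvent and its summability do not depend on the choice of algebra. So $(\mathcal{E}^1,H_E,\mathcal{D}_E)$ is a spectral triple over $E$. The spectral dimension is a property of $\mathcal{D}_E$ alone, so the formula in \Tref{2.11} persists.

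\textbf{The metric statement (main obstacle).} The Lipschitz seminorm for $\mathcal{E}^1$ is the restriction of $L_{\mathcal{D}_E}$. Nondegeneracy is inherited. For the total boundedness condition in Rieffel's criterion, the relevant set for $\mathcal{E}^1$ is a subset of the corresponding set for $\mathcal{E}$, and a subset of a totally bounded set is totally bounded; finite diameter transfers in the same way. What does not transfer automatically is that the Monge--Kantorovich metric built from the smaller domain separates states and induces the weak$^\ast$-topology. To settle this I would use one of two routes.

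\begin{itemize}
\item Inspect the proof of \cite[Thm.~5.7]{HZ}: it works with generators $k\otimes b$ and $PaP\otimes 1$, and uses of $k$ only the boundedness of $\mathrm{cl}(k\mathcal{D}_A)$ and $\mathrm{cl}(\mathcal{D}_Ak)$ together with $P$-injectivity. These hold verbatim for $k\in\mathcal{C}^1(PH_A)$, so the proof goes through.
\item Alternatively, show that $\mathcal{E}^1$ is a core for $L_{\mathcal{D}_E}|_{\mathcal{E}}$: every $x\in\mathcal{E}$ is a norm limit of elements $x_n\in\mathcal{E}^1$ with $\limsup L_{\mathcal{D}_E}(x_n)\le L_{\mathcal{D}_E}(x)$. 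For this, compress the compact parts of the generators by the spectral projections $\widetilde{Q}_n$ of $P\mathcal{D}_A|_{PH_A}$, as in Step 1 of \Pref{reqd density}. These projections commute with $\mathcal{D}_A$, so the compression controls $\mathrm{cl}(T_n\mathcal{D}_A)$ and $\mathrm{cl}(\mathcal{D}_AT_n)$ by the original operators, giving $\mathrm{cl}(T_n\mathcal{D}_A)=\widetilde{Q}_n\,\mathrm{cl}(T\mathcal{D}_A)\,\widetilde{Q}_n$. The two domains then give the same Monge--Kantorovich metric.
\end{itemize}

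The second route has a difficulty in the mixed terms of $[\mathcal{D}_E,x]$ that involve $PaP\otimes1$ and $P$. Because of this I would rely primarily on the first route, checking the specific estimates of \cite{HZ}.
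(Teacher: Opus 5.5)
Your density argument (\Pref{reqd density} applied to $P\mathcal{D}_A|_{PH_A}$), your treatment of the spectral triple and spectral dimension (via $\mathcal{E}^1\subseteq\mathcal{E}$), and your Route 1 for the metric part together make up the paper's argument. The paper simply asserts that the proof of \Tref{3.9} carries over verbatim.

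However, your ``main obstacle'' is not an obstacle. You had already assembled a complete argument before setting it aside. Write $L^1$ for $L_{\mathcal{D}_E}$ restricted to $\mathcal{E}^1$. Since $\{L^1\le 1\}\subseteq\{L_{\mathcal{D}_E}\le 1\}$, you get $d_{L^1}\le d_{L_{\mathcal{D}_E}}<\infty$. Moreover, $d_{L^1}$ separates states: two distinct states differ at some element of the dense subspace $\mathcal{E}^1$, this element is non-scalar, and it can be rescaled to have $L^1\le 1$.

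It follows that the identity map from $(\mathcal{S}(E),d_{L_{\mathcal{D}_E}})$ to $(\mathcal{S}(E),d_{L^1})$ is continuous. By \Tref{3.9}, the source is $\mathcal{S}(E)$ with its compact weak$^\ast$-topology. A continuous bijection from a compact space onto a Hausdorff space is a homeomorphism, so $d_{L^1}$ metrises the weak$^\ast$-topology.

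Equivalently, Rieffel's criterion gives this directly, because it is an ``if and only if''. You verified all of its hypotheses yourself: nondegeneracy, dense $\ast$-invariant domain, finite diameter, and total boundedness of a subset of a totally bounded set.

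This monotonicity argument is preferable to both of your routes. It uses \Tref{3.9} as a black box, instead of requiring one to check that every estimate in \cite[Thm.~5.7]{HZ} relies only on properties shared by $\mathcal{C}^1(PH_A)$. It also shows that any dense unital $\ast$-subalgebra of $\mathcal{E}$ would do. Route 2 (the core argument) is therefore unnecessary, and the difficulty with mixed terms you anticipated never arises.
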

\begin{prf}
The density of $\mathcal{E}^1$ in $E$ follows along the same line of the density of $\mathcal{E}$ in $E$, together with \Pref{reqd density}. \Tref{2.11} is clearly valid with $\mathcal{E}^1$ in place of $\mathcal{E}$, and restricting the domain of the Lipschitz seminorm to $\mathcal{E}^1$, the proof of \Tref{3.9} carries over verbatim.\qed
\end{prf}

\begin{crlre}\label{redundant}
The condition $(d)$ in point $(iii)$ of \Dref{def 2} follows from the condition $(c)$ if we replace $\mathcal{E}$ by $\mathcal{E}^1$ in the Hawkins--Zacharias construction.
\end{crlre}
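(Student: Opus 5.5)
We need to show that, once $\mathcal{C}(PH_A)$ is replaced by $\mathcal{C}^1(PH_A)$ (so $T$ ranges over continuously $\mathcal{D}_A$-differentiable compacts), condition $(c)$, $\beta_n(TP)=Q_nTQ_nP$, implies both identities in $(d)$. The plan is to compute both sides of $(d)$ explicitly and compare them.

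First we compute the right-hand side. By $(c)$, $\beta_n(TP)=Q_nTQ_nP$. Since $Q_n$ is a finite sum of spectral projections of $(P\mathcal{D}_A|_{PH_A})^{-1}$, it is a finite-rank projection onto a subspace of $\mathrm{dom}(\mathcal{D}_A)\cap PH_A$. It commutes with $\mathcal{D}_A$ and with $P$, and $Q_nP=Q_n$. Arguing exactly as in Step 1 of the proof of \Pref{reqd density}, we get
\[
\mathrm{cl}\big(Q_nTQ_nP\,\mathcal{D}_A\big)=Q_n\,\mathrm{cl}(TP\mathcal{D}_A)\,Q_nP
\quad\mbox{and}\quad
\mathrm{cl}\big(\mathcal{D}_A\,Q_nTQ_nP\big)=Q_n\,\mathrm{cl}(\mathcal{D}_ATP)\,Q_nP .
\]
Conjugating by $v$ and using $vQ_nP=Q_nPv$ (hence also $vP=Pv$), the right-hand sides of $(d)$ become $Q_n\,\big(v\,\mathrm{cl}(TP\mathcal{D}_A)v^*\big)\,Q_nP$ and $Q_n\,\big(v\,\mathrm{cl}(\mathcal{D}_ATP)v^*\big)\,Q_nP$ respectively.

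Next we handle the left-hand side. We need $\beta_n$ of $S:=v\,\mathrm{cl}(TP\mathcal{D}_A)v^*$, and we must recognise $S$ as an element of the form $T'P$ with $T'\in\mathcal{C}^1(PH_A)$. This is exactly where $\mathcal{C}^1$ enters.
\begin{enumerate}[$(1)$]
\item Since $T\in\mathcal{C}^1(PH_A)$, the operator $\mathrm{cl}(TP\mathcal{D}_A)$ is compact.
\item It is supported on $PH_A$ on both sides, because $P$ commutes with $\mathcal{D}_A$ and $T$ acts on $PH_A$.
\item Since $v$ commutes with $P$, the conjugate $S$ is again a compact operator of the form $T'P$ with $T'\in\mathcal{K}(PH_A)$.
\end{enumerate}
If $T'$ additionally lies in $\mathcal{C}^1(PH_A)$, we apply $(c)$ to $T'$. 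This gives $\beta_n(S)=Q_nT'Q_nP=Q_nSQ_nP$, which matches the right-hand side computed above. The second identity in $(d)$ follows symmetrically with $\mathrm{cl}(\mathcal{D}_ATP)$.

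The main obstacle is justifying that $T'=v\,\mathrm{cl}(TP\mathcal{D}_A)v^*$ belongs to the domain where $(c)$ is assumed. Compactness alone is guaranteed by $\mathcal{C}^1$, but membership in $\mathcal{C}^1(PH_A)$ requires further differentiability. I would try two routes.
\begin{itemize}
\item \emph{Route 1.} Interpret $(c)$, in the modified construction, as holding on the closure of $\mathcal{C}^1(PH_A)$, which is $\mathcal{K}(PH_A)$ by \Pref{reqd density}. This is justified by continuity: $\beta_n$ is bounded, being $2$-contractive, and $T\mapsto Q_nTQ_nP$ is norm-continuous. So if $(c)$ holds on the dense set $\mathcal{C}^1(PH_A)$, it holds for all $TP$ with $T\in\mathcal{K}(PH_A)$.
\item \emph{Route 2.} As a fallback, approximate $T'$ in norm by $Q_mT'Q_m\in\mathcal{C}^1(PH_A)$ and pass to the limit, again using boundedness of $\beta_n$.
\end{itemize}
Either route yields $\beta_n(S)=Q_nSQ_nP$ and closes the argument, since the $v$-conjugation and the closures only involve operators that commute with $Q_n$ and $P$.
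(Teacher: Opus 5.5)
Your proposal is correct and, through Route 1, is essentially the paper's argument: you extend $(c)$ to all of $\mathcal{K}(PH_A)$ by density and the norm-continuity of the contractive map $\beta_n$, then use that for $T\in\mathcal{C}^1(PH_A)$ the operators $\mathrm{cl}(TP\mathcal{D}_A)$ and $\mathrm{cl}(\mathcal{D}_ATP)$ are compact and supported on $PH_A$, together with the facts that $Q_n$ commutes with $P\mathcal{D}_A$ and that $v$ commutes with $Q_nP$ and $P$. The only difference is cosmetic: the paper first records the equivariance $\beta_n(vTPv^*)=v\beta_n(TP)v^*$ for all compact $T$ and then substitutes, whereas you compute both sides of $(d)$ directly, and Route 2 is superfluous because Route 1 already closes the argument.
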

\begin{prf}
Observe that the condition $(c)$ implies the equality for any $T\in\mathcal{K}(PH_A)$ by the density of $\mathcal{C}(PH_A)\mbox{ in }\mathcal{K}(PH_A)$ and the continuity of $\beta_n$. Thus for all $T\in\mathcal{K}(PH_A)$, we have $\beta_n(vTPv^*)=v\beta_n(TP)v^*$ for the unitary $v\in B(H_A)$ because $vQ_nP=Q_nPv\mbox{ and }vTv^*P\in\mathcal{K}(PH_A)$. Now, take $S\in\mathcal{C}^1(PH_A)$. Then, replacing $T$ in the above by $\mathrm{cl}(SP\mathcal{D}_A)$ and $\mathrm{cl}(\mathcal{D}_ASP)$ respectively, and using the fact that $Q_n$ commutes with $P\mathcal{D}_A$ for each $n\in\mathbb{N}$, we obtain the condition $(d)$.\qed
\end{prf}

Consequently, after replacing the dense $\ast$-subalgebra $\mathcal{E}$ by $\mathcal{E}^1$ in the Hawkins--Zacharias construction, the approximation hypothesis in \Dref{def 2} simplifies. It remains to exhibit an example that satisfies all the conditions in \Dref{def 2}, except possibly the condition $(d)$ in point $(iii)$.

\begin{xmpl}\rm\label{to refer end}
Let $B$ be any unital $C^*$-algebra and consider the short exact sequence
\[
0\longrightarrow\mathcal{K}(\ell^2(\bbn))\otimes B\longrightarrow E\longrightarrow \bbc\longrightarrow 0\,,
\]
the minimal unitization of $\mathcal{K}(\ell^2(\bbn))\otimes B$. This is a split extension with the Toeplitz projection $P=\mathrm{Id}\in B(\ell^2(\bbn))$. The spectral triple on the quotient under consideration is $(\bbc,\ell^2(\bbn),N-|e_0\rangle\langle e_0|)$, where $N:e_k\mapsto ke_k$ is the number operator on $\ell^2(\bbn)$. For each $n\in\bbn$, define $\beta_n:\mathcal{K}(\ell^2(\bbn))\oplus\bbc\subseteq B(\ell^2(\bbn))\longrightarrow B(\ell^2(\bbn))$ by $T+\lambda\mathrm{I}\longmapsto Q_n(T+\lambda\mathrm{I})Q_n+\lambda(I-Q_n)$, where $Q_n$'s are the sum of first $n$-many spectral projections of $(N-|e_0\rangle\langle e_0|)^{-1}$. Clearly, each $\beta_n$ is a ucp map, so it extends to a ucp (hence, unital $2$-contractive) map on $B(\ell^2(\bbn))$ by the Arveson's extension theorem. It is easy to verify that all the conditions in \Dref{def 2}, except the condition $(d)$ in point $(iii)$, are satisfied by taking $v=\mathrm{Id}\in B(\ell^2(\bbn))$.
\end{xmpl}

The following special case of \Dref{def 2} will be needed in \Sref{Sec6.2}.
\begin{dfn}\label{def 3}
Consider a Toeplitz type extension \Eref{Toeplitz type extension} and let $(\mathcal{A},H_{A},\mathcal{D}_{A},P)$ be a Toeplitz type quadruple over $A$ that is $P$-injective. A sequence $\{\rho_k\}_{k\in\bbn}$ of unital $2$-contractive maps $\rho_k:B(H_A)\to B(H_A)$ is said to be a Toeplitz type sequence of $2$-contractive maps if $\rho_k(\mathcal{A})\subseteq\mathcal{A}$ for each $k\in\bbn$, and moreover, conditions $(ii)$ and $(iii)$ in \Dref{def 2} are also satisfied.
\end{dfn}

Since the above definition is a special case of \Dref{def 2}, \Yref{redundant} is also applicable here.


\newsection{Lifting complete sub-operator systems to the extension}\label{Sec5}

Consider a Toeplitz type extension
\[
0\longrightarrow\mathcal{K}(H_A)\otimes B\longrightarrow E\longrightarrow A\longrightarrow 0.
\] 
as defined in \Cref{Toeplitz extension} such that $(E, L_{\mathcal{D}_E})$ is a compact quantum metric space obtained by the Hawkins--Zacharias construction (\Tref{3.9}). Let $\{A_n\}_{n\in\bbn}$ be a sequence of complete sub-operator systems of $A$. For each $n\in\bbn$, let $Q_n=\sum_{k=1}^nP_k$ where $P_k$ be the spectral projections of $\left(P\mathcal{D}_A|_{PH_A}\right)^{-1}\in\mathcal{K}(PH_A)$. For each $n\in\bbn$, consider the following subspace of $E$
\begin{IEEEeqnarray}{lCl}\label{quotient sub}
E_n:= Q_n\mathcal{K}(PH_A)Q_n\otimes B+ PA_nP\otimes\bbc\,.
\end{IEEEeqnarray}

On the other hand, let $\{B_n\}_{n\in\bbn}$ be a sequence of complete sub-operator systems of $B$. For each $n\in\bbn$, consider the following subspace of $E$
\begin{IEEEeqnarray}{lCl}\label{ideal sub}
G_n:= \mathcal{K}(PH_A) \otimes B_n+ PAP\otimes \mathbb{C}\,.
\end{IEEEeqnarray}

We show that the subspaces $E_n\mbox{ and }G_n$ of $E$ for each $n\in\bbn$, defined in \Eref{quotient sub} and \Eref{ideal sub} respectively, are \emph{complete} sub-operator systems of $E$. The faithful representation $\Pi$ of $E$ described in \Sref{Sec3.3} is the key ingredient in proving their closedness.

\begin{lmma}\label{S}
Let $S$ be a subspace of $\mathcal{K}(PH_A)\otimes B$ and $S_1$  be a closed subspace of $S$. Furthermore, let $V$ be a subspace of $A$ and $V_1$ be a closed subspace of $V$. Then $W_1= S_1+PV_1P\otimes \mathbb{C}$ is closed in $W= S+PVP\otimes \mathbb{C}$. 
\end{lmma}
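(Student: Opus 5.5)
The idea is to take a sequence in $W_1$ converging to a point of $W$ and to split it into its two components: first recover convergence of the $A$-component by passing to the quotient, then read off convergence of the ideal component by subtraction. Closedness of $V_1$ in $V$ and of $S_1$ in $S$ then puts the limit in $W_1$.

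\emph{Setting up.} Let $x_k=s_k+Pv_kP\otimes 1\in W_1$, with $s_k\in S_1$ and $v_k\in V_1$, and suppose $x_k\to x$ in norm with $x\in W$. Write $x=s+PvP\otimes 1$ with $s\in S$ and $v\in V$. These decompositions are unique by condition $(iii)$ of \Dref{Toeplitz extension}, since $\mathcal{K}(PH_A)\otimes B\cap PAP\otimes\bbc .1_B=\{0\}$.

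\emph{Step 1: convergence of the $A$-component.} I would apply the quotient map $\sigma:E\to A$. It vanishes on the ideal $\mathcal{K}(PH_A)\otimes B$. It sends $PaP\otimes 1$ to $a$, because $a\mapsto PaP$ is multiplicative modulo compacts (by $[P,a]\in\mathcal{K}(H_A)$), and under the identification $E\cong \mathcal{K}(PH_A)\otimes B+PAP\otimes\bbc$ the quotient by the ideal recovers $A$. To see that $\sigma$ is contractive, I would use the faithful representation $\Pi$ of \Sref{Sec3.3}. Its summand $\pi_\sigma=\pi_A\circ\sigma\otimes 1$ gives
\[
\|\pi_A(\sigma(y))\|=\|\pi_\sigma(y)\|\le\|\Pi(y)\|=\|y\| \quad\text{for } y\in E .
\]
Since $\pi_A$ is faithful, hence isometric, this yields $\|v_k-v\|=\|\sigma(x_k-x)\|\le\|x_k-x\|\to 0$. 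As $v\in V$, each $v_k\in V_1$, and $V_1$ is closed in $V$, we conclude $v\in V_1$.

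\emph{Step 2: convergence of the ideal component.} Compression by $P$ and tensoring with $1$ are contractive, so
\[
\|Pv_kP\otimes 1-PvP\otimes 1\|\le\|v_k-v\|\to 0 .
\]
Subtracting gives $s_k=x_k-Pv_kP\otimes 1\to x-PvP\otimes 1=s$. Since $s\in S$, each $s_k\in S_1$, and $S_1$ is closed in $S$, we get $s\in S_1$. Hence $x\in W_1$.

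\emph{Main obstacle.} The only nontrivial point is Step 1: justifying that $\sigma(PaP\otimes 1)=a$ and that the $A$-component depends continuously on the element. I would handle this precisely through the $\pi_\sigma$ summand of $\Pi$, as above, together with the uniqueness of decompositions from condition $(iii)$.
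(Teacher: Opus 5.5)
Your proof is correct and is essentially the paper's argument: the paper also uses the $\Pi_1=\pi_\sigma\oplus\pi_\sigma$ summand of the faithful representation $\Pi$ to get $\|a_k-a\|\to 0$ and conclude $a\in V_1$, and then obtains convergence of the ideal components by subtraction and closedness of $S_1$. Your appeal to uniqueness of decompositions is harmless but not needed, since $\sigma(x_k-x)=v_k-v$ holds for any chosen decomposition.
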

\begin{prf}
Let $\{e_k\}_{k\in\bbn}$ defined by $e_k:= x_k +Pa_kP\otimes 1$ be a sequence in $W_1$ for $x_k\in S_1$ and $a_k\in V_1$ converging to $e=x+PaP\otimes 1\in W$ for $x\in S$ and $a\in V$. Considering the faithful representation $\Pi$ of $E$ described in \Sref{Sec3.3}, equivalently we can say that $\Pi(e_k-e)$ converges to $0$ in $B(H)$. But $\Pi=\Pi_1\oplus\Pi_2\oplus\Pi_2$ by construction. In particular, $\Pi_1(e_k-e)$ converges to $0$. Now the definition of $\Pi_1$ implies that $\|a_k-a\|$ converges to $0$ in $V$, which means that $a\in V_1$ since $V_1$ is closed in $V$. Obviously, $Pa_kP-PaP$ converges to $0$ as $P$ is orthogonal projection, and hence $x_k$ converges to $x$ in $S$. Since $S_1$ is closed in $S$, this implies that $x\in S_1$. Therefore, $e=x+PaP\otimes 1 \in W_1$ which shows that $W_1$ is closed in $W$.\qed
\end{prf}

\begin{lmma}\label{Q_m}
For each $n\in\bbn$, the subspace $Q_nK(PH_A)Q_n\otimes B$ is closed in $K(PH_A)\otimes B$, where $Q_n=\sum_{k=1}^nP_k$ and $P_k$'s are the spectral projections of $\left(P\mathcal{D}_A|_{PH_A}\right)^{-1}\in\mathcal{K}(PH_A)$.
\end{lmma}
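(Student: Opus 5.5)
The plan is to realize $Q_n\mathcal{K}(PH_A)Q_n\otimes B$ as the range of a bounded idempotent on $\mathcal{K}(PH_A)\otimes B$; ranges of bounded idempotents are closed. First, since $(P\mathcal{D}_A|_{PH_A})^{-1}$ is a compact self-adjoint operator on $PH_A$ (this uses $P$-injectivity), each of its nonzero-eigenvalue spectral projections $P_k$ has finite rank. Hence $Q_n$ is a finite-rank projection on $PH_A$, and $Q_n\mathcal{K}(PH_A)Q_n = Q_nB(PH_A)Q_n\cong M_{d_n}(\bbc)$ with $d_n=\mathrm{rank}(Q_n)$.

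Next, consider the map $\Phi_n:\mathcal{K}(PH_A)\otimes B\to\mathcal{K}(PH_A)\otimes B$ given by $\Phi_n(y):=(Q_n\otimes 1_B)\,y\,(Q_n\otimes 1_B)$. The multiplier $Q_n\otimes 1_B$ lies in $B(PH_A)\otimes B\subseteq\mathcal{M}(\mathcal{K}(PH_A)\otimes B)$, so $\Phi_n$ is well defined. It is contractive because it is a compression by a projection, and it is idempotent. On elementary tensors, $\Phi_n(T\otimes b)=Q_nTQ_n\otimes b$, so $\Phi_n$ maps the algebraic tensor product onto $Q_n\mathcal{K}(PH_A)Q_n\odot B$. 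It fixes this subspace pointwise.

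Then I identify the range. By continuity and density of the algebraic tensor product, $\mathrm{Ran}(\Phi_n)$ is contained in the closure of $Q_n\mathcal{K}(PH_A)Q_n\odot B$. Since $Q_n\mathcal{K}(PH_A)Q_n$ is finite-dimensional, $Q_n\mathcal{K}(PH_A)Q_n\otimes B\cong M_{d_n}(B)$. Concretely, choose an orthonormal basis $\{f_i\}$ of $Q_nPH_A$ with matrix units $e_{ij}$. Every element of the algebraic tensor product then has the form $\sum_{i,j}e_{ij}\otimes b_{ij}$. The coefficient maps $y\mapsto b_{ij}$ are bounded, with $\|b_{ij}\|\le\|y\|$, since $b_{ij}$ is obtained by compressing with $e_{1i}\otimes 1$ and $e_{j1}\otimes1$. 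It follows that the algebraic tensor product is already closed, so it equals $Q_n\mathcal{K}(PH_A)Q_n\otimes B$ and coincides with $\mathrm{Ran}(\Phi_n)=\ker(\mathrm{id}-\Phi_n)$. A kernel of a bounded operator is closed, which gives the claim.

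The only step needing care is that the algebraic span $Q_n\mathcal{K}(PH_A)Q_n\odot B$ is norm closed, which the bounded coefficient maps settle. Everything else is formal. Alternatively, one can bypass this step by simply defining $Q_n\mathcal{K}(PH_A)Q_n\otimes B$ as the closure and noting that it equals $\ker(\mathrm{id}-\Phi_n)$.
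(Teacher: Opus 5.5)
Your proof is correct and takes essentially the paper's approach. The paper's one-line proof appeals only to the fact that $Q_n$ is an orthogonal projection, and your bounded idempotent $\Phi_n(y)=(Q_n\otimes 1_B)\,y\,(Q_n\otimes 1_B)$, with range $\ker(\mathrm{id}-\Phi_n)$, is exactly what that appeal amounts to once written out. Your additional finite-rank check that $Q_n\mathcal{K}(PH_A)Q_n\odot B\cong M_{d_n}(B)$ is already norm closed is the same observation the paper makes later in the proof of Lemma~\ref{4.19}.
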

\begin{prf}
Follows from the fact that $Q_n$'s are orthogonal projections.\qed
\end{prf}

\begin{ppsn}\label{pf of sub}
For each $n\in\bbn$, the subspaces $E_n\mbox{ and }G_n$ of $E$, defined in \Eref{quotient sub} and \Eref{ideal sub} respectively, are complete sub-operator systems of $E$.
\end{ppsn}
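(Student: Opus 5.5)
The argument splits into two parts. First I check the algebraic sub-operator system axioms for $E_n$ and $G_n$. Then I get closedness by applying \Lref{S} with $W=E$, which is written as $\mathcal{K}(PH_A)\otimes B+PAP\otimes\bbc$ by \Dref{Toeplitz extension}$(ii)$. Since $E$ is closed, "closed in $W$" then means closed in $E$, i.e.\ complete.

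\textbf{Algebraic part.} Both $E_n$ and $G_n$ are linear subspaces, being sums of linear subspaces.

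For the unit: the unit of $E$ is $P1_AP\otimes 1=P\otimes 1$, as seen through $\Pi_1$ and the identification in \Dref{Toeplitz extension}. Since $A_n$ is a sub-operator system of $A$, we have $1_A\in A_n$. Hence $P\otimes 1\in PA_nP\otimes\bbc\subseteq E_n$. Similarly $1\in PAP\otimes\bbc\subseteq G_n$.

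For $\ast$-invariance: $(Q_nTQ_n\otimes b)^*=Q_nT^*Q_n\otimes b^*$, and $T^*\in\mathcal{K}(PH_A)$, $b^*\in B$, so the first summand of $E_n$ is stable. Also $(PaP\otimes 1)^*=Pa^*P\otimes 1$ with $a^*\in A_n$, since $A_n$ is $\ast$-invariant. In $G_n$, $(T\otimes b)^*=T^*\otimes b^*$ with $b^*\in B_n$, and the second summand is stable as before. Since the adjoint is additive on sums, both $E_n$ and $G_n$ are $\ast$-invariant.

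\textbf{Closedness.} For $E_n$, take $S=\mathcal{K}(PH_A)\otimes B$ and $S_1=Q_n\mathcal{K}(PH_A)Q_n\otimes B$; then $S_1$ is closed in $S$ by \Lref{Q_m}. Take $V=A$ and $V_1=A_n$, which is closed because $A_n$ is a complete sub-operator system. \Lref{S} then gives that $E_n=S_1+PV_1P\otimes\bbc$ is closed in $W=E$.

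For $G_n$, take $S_1=\mathcal{K}(PH_A)\otimes B_n$ and $V_1=V=A$. The one point to verify is that $\mathcal{K}(PH_A)\otimes B_n$ is closed in $\mathcal{K}(PH_A)\otimes B$. I would argue this via the slice maps $R_\omega=\omega\otimes\mathrm{id}$ for $\omega\in\mathcal{K}(PH_A)^*$. These are bounded maps $\mathcal{K}(PH_A)\otimes B\to B$, and by the standard Fubini-type characterization for closed subspaces of $B$,
\[
\mathcal{K}(PH_A)\otimes B_n=\{x:\ R_\omega(x)\in B_n\ \text{for all}\ \omega\}.
\]
This characterization holds because $\mathcal{K}$ is nuclear and has the completely bounded approximation property. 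Alternatively, approximate $x$ by $(Q\otimes 1)x(Q\otimes 1)$ with $Q$ finite-rank projections: such compressions lie in $M_k(B_n)$ when $x$ does, and $M_k(B_n)$ is closed in $M_k(B)$.

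Once the right-hand side above is established, it is closed because each $R_\omega$ is continuous and $B_n$ is closed. With that, \Lref{S} applies and $G_n$ is closed in $E$.

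I expect this closedness of $\mathcal{K}(PH_A)\otimes B_n$ to be the main obstacle, specifically the identification of the $C^*$-tensor closure with the slice-map description. The finite-rank compression argument is the route I would try first, since it avoids any slice-map theory.
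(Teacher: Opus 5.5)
Your proposal is correct and matches the paper's proof: both apply \Lref{S} with $W=E=\mathcal{K}(PH_A)\otimes B+PAP\otimes\bbc$, taking $S_1=Q_n\mathcal{K}(PH_A)Q_n\otimes B$, $V_1=A_n$ for $E_n$ and $S_1=\mathcal{K}(PH_A)\otimes B_n$, $V_1=V=A$ for $G_n$, and check the unit and $\ast$-invariance directly. The closedness of $\mathcal{K}(PH_A)\otimes B_n$ in $\mathcal{K}(PH_A)\otimes B$, which the paper merely asserts, is not a real obstacle: read as the norm closure of $\mathcal{K}(PH_A)\odot B_n$ inside $\mathcal{K}(PH_A)\otimes B$ (the injective minimal tensor product), it is closed by definition, so your compression argument is valid but only needed if you want its explicit description, and the slice-map route is unnecessary.
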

\begin{prf}
We show that both the subspaces $E_n$ and $G_n$ of $E$ are closed in $E$ for each $n\in\bbn$. To show for $E_n$, take $S=\mathcal{K}(PH_A)\otimes B,\,S_1= Q_n\mathcal{K}(PH_A)Q_n\otimes B,\,V=A\mbox{ and }V_1=A_n$ in Lemma \ref{S}. Then, $S_1$ is a closed subspace of $S$ by Lemma \ref{Q_m}, and $V_1$ is a closed subspace of $V$ (since $A_n\mbox{'s}$ are complete sub-operator systems of $A$). Similarly, to show for $G_n$, take $S= \mathcal{K}(PH_A)\otimes B,\,S_1= \mathcal{K}(PH_A)\otimes B_n,\mbox{ and }V_1=A=V$ in Lemma \ref{S}. Then, $S_1$ is a closed subspace of $S$ (since $B_n$'s are complete sub-operator systems of $B$). The conclusion now follows from Lemma \ref{S}. The unitality (the unit of $E$ is $P1_AP\otimes1_B$) and $\ast$-invariance are immediate. Hence, both $E_n\mbox{ and }G_n$ are complete sub-operator systems of $E$.\qed
\end{prf}


\newsection{Quantum Gromov--Hausdorff convergence for the extension}\label{Sec6}

We now prove the two lifting theorems announced in the Introduction. Throughout the section, we work under the following standing assumptions. We have a Toeplitz type extension 
\[
0\longrightarrow\mathcal{K}(H_A)\otimes B\longrightarrow E\longrightarrow A\longrightarrow 0
\]
as defined in \Cref{Toeplitz extension}, along with a spectral triple $(\mathcal{B},H_B,\mathcal{D}_B)$ over $B$ and a Toeplitz type and $P$-injective quadruple $(\mathcal{A},H_A,\mathcal{D}_A,P)$ over $A$. We have a spectral triple $(\mathcal{E},H_E,\mathcal{D}_E)$ over $E$, where $\mathcal{E}$ is as defined in \Eref{our domain}, obtained by the Hawkins--Zacharias construction so that $(E,L_{\mathcal{D}_E})$ is a compact quantum metric space by \Tref{3.9}.
\smallskip

The proof of the lifting theorems requires analyzing the explicit description of the Lipschitz seminorm coming from the Dirac operator on the extension. Hence, we first write the commutator $[\mathcal{D}_E\,,\,e]$ explicitly for an arbitrary element $e=x+PaP\otimes 1\in\mathcal{E}$, where $x\in K(PH_A)\otimes B$. Recall the representations $\pi,\pi_\sigma,\Pi_1,\Pi_2$ defined in \Sref{Sec3.3}. Then, we have the following:
\begin{IEEEeqnarray}{lCl}\label{commutator 1}
[\mathcal{D}_{1}\,,\,\Pi_1(e)]=\begin{pmatrix}
 [\mathcal{D}_A,a]\otimes 1 & 0\\
0 & -[\mathcal{D}_A,a]\otimes 1
\end{pmatrix}
\end{IEEEeqnarray}
\[
[\mathcal{D}_{2}\,,\,\Pi_2(e)]=\begin{pmatrix}
0 & P[P\mathcal{D}_A,a]\otimes 1 -x(P\mathcal{D}_A\otimes 1)\\
[P\mathcal{D}_A,a]P\otimes 1 +( P\mathcal{D}_A \otimes 1)x & -[(1-P)\mathcal{D}_A, a]\otimes 1
\end{pmatrix},
\]
\[
[\mathcal{D}_3\,,\,\Pi_2(e)]=\begin{pmatrix}
[1\otimes \mathcal{D}_B,x] & 0\\
0 & 0
\end{pmatrix}
\]
using the fact that $[P, \mathcal{D}_A]=0$. Therefore,
\[
[\mathcal{D}_{I}\,,\,\Pi_2(e)\oplus\Pi_2(e)]=\begin{pmatrix}
0 & [\mathcal{D}_{2}-i\mathcal{D}_3\,,\,\Pi_2(e)]\\
[\mathcal{D}_{2}+i\mathcal{D}_3\,,\,\Pi_2(e)] & 0
\end{pmatrix},
\]
where
\[
[\mathcal{D}_{2}-i\mathcal{D}_3\,,\,\Pi_2(e)]=\begin{pmatrix}
-i[1\otimes \mathcal{D}_B,x] &  P[P\mathcal{D}_A,a]\otimes 1 -x(P\mathcal{D}_A\otimes 1)\\
[P\mathcal{D}_A,a]P\otimes 1 +(P\mathcal{D}_A\otimes 1)x & -[(1-P)\mathcal{D}_A, a]\otimes 1
\end{pmatrix},
\]
and
\[
[\mathcal{D}_{2}+i\mathcal{D}_3\,,\,\Pi_2(e)]=\begin{pmatrix}
i[1\otimes \mathcal{D}_B,x] &  P[P\mathcal{D}_A,a]\otimes 1 -x(P\mathcal{D}_A\otimes 1)\\
[P\mathcal{D}_A,a]P\otimes 1 +(P\mathcal{D}_A\otimes 1)x & -[(1-P)\mathcal{D}_A, a]\otimes 1
\end{pmatrix}.
\]
Finally,
\begin{IEEEeqnarray}{lCl}\label{commutator 2}
[\mathcal{D}_E\,,\,\Pi_1(e)\oplus\Pi_2(e)\oplus\Pi_2(e)]=\begin{pmatrix}
[\mathcal{D}_{1},\Pi_1(e)] & 0\\
0 & [\mathcal{D}_{I},\Pi_2(e)\oplus\Pi_2(e)]
\end{pmatrix}
\end{IEEEeqnarray}
acting on the Hilbert space $H_E=(\mathcal{H}\otimes\bbc^2)^3\cong(\mathcal{H}\otimes\bbc^2)\oplus((\mathcal{H}\otimes\bbc^2)\otimes\bbc^2)$, where $\mathcal{H}=H_{A}\otimes H_{B}$.

The purpose of \Dref{def 1} and \Dref{def 2} was to encode exactly the compatibility needed with the commutator formula \Eref{commutator 2}.


\subsection{Approximations from the quotient part}\label{Sec6.1}

Consider a Toeplitz type extension and a pair $(A,\{A_n\}_{n\in\bbn})$ satisfying \Dref{def 2}. Assume that $A_n\cap\mathcal{A}$ is dense in $A_n$ for each $n\in\bbn$. Then, each $(A_n,L_{\mathcal{D}_A}|_{A_n})$ inherits a compact quantum metric structure by restricting the Lip-norm $L_{\mathcal{D}_A}$ on $A$ to $A_n$. The conditions in \Dref{def 2} now imply that the sequence $\{(A_n,L_{\mathcal{D}_A}|_{A_n})\}_{n\in\bbn}$ converges to $(A,L_{\mathcal{D}_A})$ in the quantum Gromov--Hausdorff distance, which follows from \Lref{Kaad}.

The following theorem is the principal lifting result from the quotient. It shows that controlled approximations of the quotient lift to approximations of the extension.
\begin{thm}\label{Quotient}
Consider a Toeplitz type extension 
\[
0\longrightarrow\mathcal{K}(H_A)\otimes B\longrightarrow E\longrightarrow A\longrightarrow 0
\]
such that $(E,L_{\mathcal{D}_E})$ is a compact quantum metric space obtained by the Hawkins--Zacharias construction. Let $\{A_n\}_{n\in\bbn}$ be a sequence of complete sub-operator systems of $A$ such that each $A_n\cap\mathcal{A}$ is dense in $A_n$. If there exists a unital $2$-contractive approximation of Toeplitz type associated to the pair $(A,\{A_n\}_{n\in\bbn})$, then the sequence of compact quantum metric spaces $\{(E_n,L_{\mathcal{D}_E}|_{E_n})\}_{n\in\bbn}$ consisting of complete sub-operator systems of $E$ given by $$E_n:= Q_n\mathcal{K}(PH_A)Q_n \otimes B+ PA_nP \otimes \bbc $$  converges to $(E,L_{\mathcal{D}_E})$ in the quantum Gromov--Hausdorff distance.
\end{thm}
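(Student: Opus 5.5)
The plan is to apply \Lref{Kaad} with $X=E$, $L=L_{\mathcal{D}_E}$, $Y=E_n$ and $K=L_{\mathcal{D}_E}|_{E_n}$. The domain of $K$ is $E_n\cap\mathcal{E}$, which is dense and $\ast$-invariant by the density of $A_n\cap\mathcal{A}$ in $A_n$ and of $\mathcal{C}(PH_A)$ in $\mathcal{K}(PH_A)$; \Pref{pf of sub} gives that $E_n$ is a complete sub-operator system. It then suffices to build, for each $n$, a map $\Theta_n:\mathcal{E}\to E_n\cap\mathcal{E}$ such that
\[
L_{\mathcal{D}_E}(\Theta_n(e))\le L_{\mathcal{D}_E}(e)\quad\text{and}\quad \|e-\Theta_n(e)\|\le\varepsilon L_{\mathcal{D}_E}(e)
\]
for all $n\ge N_\varepsilon$. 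Lipschitz elements outside $\mathcal{E}$ can be handled by lower semicontinuity/density, or avoided by noting that the lemma only needs a dense domain.

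Let $\beta_n$ be the given approximation. For $e=x+PaP\otimes 1$ with $x$ a finite sum of elementary tensors $k\otimes b$, define
\[
\Theta_n(e):=(\beta_n\otimes\mathrm{id})(x)+P\beta_n(a)P\otimes 1 ,
\]
where on the ideal part, condition $(iii)(c)$ gives $(\beta_n\otimes\mathrm{id})(k\otimes b)=Q_nkQ_n\otimes b$. Condition $(i)$ and $(c)$ then put $\Theta_n(e)$ into $E_n\cap\mathcal{E}$. Equivalently, on the faithful representation, $\Theta_n$ acts as $\beta_n\otimes\mathrm{id}_{B(H_B)}$ applied entrywise to $\Pi_1(e)$ and $\Pi_2(e)$. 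Conditions $(b)$ and $(c)$ make this compatible with the compression by $P$ appearing in $\pi$.

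\textbf{Norm estimate.} Using $\|\Pi(\cdot)\|=\max(\|\Pi_1\|,\|\Pi_2\|)$, bound the $\Pi_1$ part by $\|a-\beta_n(a)\|\le\varepsilon L_{\mathcal{D}_A}(a)\le\varepsilon L_{\mathcal{D}_E}(e)$; the last inequality comes from \Eref{commutator 1}. The ideal part $x-(Q_n\otimes 1)x(Q_n\otimes 1)$ I plan to control through $P$-injectivity. Write $Q_n^\perp=P-Q_n$. On $\mathrm{ran}(Q_n^\perp)$ one has $\|(P\mathcal{D}_A)^{-1}Q_n^\perp\|\le\lambda_{n+1}\to0$, where $\lambda_k$ are the eigenvalues of $(P\mathcal{D}_A|_{PH_A})^{-1}$. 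Hence
\[
\|Q_n^\perp x\|\le\lambda_{n+1}\,\|(P\mathcal{D}_A\otimes1)x\|,
\]
and $\|(P\mathcal{D}_A\otimes 1)x\|$ is bounded by the lower-left entry of $[\mathcal{D}_2,\Pi_2(e)]$ plus $\|[P\mathcal{D}_A,a]\|\le L_{\mathcal{D}_A}(a)$. The same argument applies to $xQ_n^\perp$ via the upper-right entry. This gives $\|x-Q_nxQ_n\|\le C\lambda_{n+1}L_{\mathcal{D}_E}(e)$.

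\textbf{Lipschitz estimate, the main obstacle.} The commutator $[\mathcal{D}_E,\Theta_n(e)]$ must be compared with $[\mathcal{D}_E,e]$. The idea is that every block entry in \Eref{commutator 1}–\Eref{commutator 2} for $\Theta_n(e)$ equals $\mathrm{Ad}_{v}\circ(\beta_n\otimes\mathrm{id})\circ\mathrm{Ad}_{v^*}$ applied to the corresponding entry for $e$:
\begin{itemize}
\item for $[P\mathcal{D}_A,a]$ and $[(1-P)\mathcal{D}_A,a]$ this is condition $(a)$;
\item for the products with $P$ it is condition $(b)$;
\item for $x(P\mathcal{D}_A\otimes1)$ and $(P\mathcal{D}_A\otimes1)x$ it is condition $(d)$;
\item for $[1\otimes\mathcal{D}_B,x]$ it follows because $\beta_n\otimes\mathrm{id}$ only acts on the first leg.
\end{itemize}
Thus $[\mathcal{D}_E,\Pi(\Theta_n(e))]$ is obtained from $[\mathcal{D}_E,\Pi(e)]$ by applying the map $\Phi_n=\mathrm{Ad}_{\tilde v}\circ(\beta_n\otimes\mathrm{id})\circ\mathrm{Ad}_{\tilde v^*}$ entrywise to a $2\times2$ block operator matrix, with $\tilde v=v\otimes1$.

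The delicate point is the norm bound. A $2\times 2$ operator matrix in $M_2(B(H_A\otimes H_B))$ is mapped by $(\Phi_n)_2$, so I need $\beta_n\otimes\mathrm{id}_{B(H_B)}$ to be contractive at the $2\times2$ level. Unital $2$-contractivity of $\beta_n$ alone does not obviously give this once tensored with $B(H_B)$. My first attempt is to use that the relevant entries lie in $\mathcal{K}(H_A)\otimes\mathcal{B}$-type spans and to reduce to finite sums, identifying $M_2(M_m(\cdot))$. Failing that, I would check whether the proof only requires $2$-contractivity on the $H_A$ leg, since the $H_B$ leg enters through $1\otimes\mathcal{D}_B$, which commutes with $\beta_n\otimes\mathrm{id}$. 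The larger block structure of $\mathcal{D}_E$ (the $\sigma_1,\sigma_2$ doubling) should be handled by norming each off-diagonal $2\times2$ block separately and using $\|\begin{pmatrix}0&X\\Y&0\end{pmatrix}\|=\max(\|X\|,\|Y\|)$. With this, $L_{\mathcal{D}_E}(\Theta_n(e))\le L_{\mathcal{D}_E}(e)$, and \Lref{Kaad} yields $\mathrm{dist}_Q\to0$.
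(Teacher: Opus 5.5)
Your plan follows the paper's proof. $\Theta_n$ is the paper's map $\alpha_n=\beta_n\otimes\mathrm{id}$, your norm estimate is \Pref{3.6} (via $Y=(P\mathcal{D}_A|_{PH_A})^{-1}$ and $P$-injectivity), your Lipschitz estimate is meant to be \Pref{4.13} (entrywise intertwining through conditions $(a)$--$(d)$ and conjugation by $v\otimes 1$), and \Lref{Kaad} concludes. There is one small slip in the norm part: $\|[P\mathcal{D}_A,a]\|\le L_{\mathcal{D}_A}(a)$ is not true in general. Instead, write $[P\mathcal{D}_A,a]=[\mathcal{D}_A,a]-[(1-P)\mathcal{D}_A,a]$ and bound both terms by $L_{\mathcal{D}_E}(e)$, using the $\Pi_1$-block and the lower-right entry of \Eref{commutator 2}. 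This gives $\|(P\mathcal{D}_A\otimes 1)x\|\le 3L_{\mathcal{D}_E}(e)$, which is the constant the paper takes from Hawkins--Zacharias.

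The genuine gap is the step you flag yourself. To get $L_{\mathcal{D}_E}(\Theta_n(e))\le L_{\mathcal{D}_E}(e)$ you need $\|(\beta_n\otimes\mathrm{id})_2(M)\|\le\|M\|$ for the (unitarily conjugated) $2\times 2$ matrices $M=[\mathcal{D}_2\pm i\mathcal{D}_3,\Pi_2(e)]$. Your proposal does not prove this, and neither fallback closes it.
\begin{enumerate}[$(i)$]
\item Reducing to finite sums does not help. The norm on the $H_B$-leg is a supremum over finite-dimensional compressions, so this only converts $(\beta_n\otimes\mathrm{id})_2$ into $(\beta_n)_{2k}$ with $k$ arbitrarily large. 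That would require $\beta_n$ to be completely contractive.
\item The ``only the $H_A$-leg matters'' idea fails. The entries $\sum_j T_jP\otimes[\mathcal{D}_B,b_j]$ and $\sum_j \mathrm{cl}(T_jP\mathcal{D}_A)\otimes b_j$ have non-scalar $H_B$-legs. The fact that $1\otimes\mathcal{D}_B$ commutes with $\beta_n\otimes\mathrm{id}$ gives the intertwining identity, not a norm bound.
\end{enumerate}

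The paper handles this step by simply asserting ``the $2$-contractivity of $\alpha_n$'', and your suspicion is justified. Since $\|\beta\otimes\mathrm{id}_{M_k}\|=\|\beta_k\|$, $2$-contractivity of $\beta_n$ does not formally imply that $\beta_n\otimes\mathrm{id}_{B(H_B)}$ is contractive, let alone $2$-contractive, when $H_B$ is infinite-dimensional. For instance, the transpose $t$ on $M_2$ is a unital isometry, but $\|t\otimes\mathrm{id}_{M_2}\|=2$.

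The clean repair, for your argument and the paper's, is to require each $\beta_n$ to be completely contractive (equivalently ucp, as in the paper's own example). Then $\beta_n\otimes\mathrm{id}$ is a complete contraction on $B(H_A)\odot B(H_B)\subseteq B(H_A\otimes H_B)$, and conjugation by the unitary $v\otimes 1\otimes I_2$ is a complete isometry. Your intertwining, together with $\|[\mathcal{D}_I,\cdot\,]\|=\max_\pm\|[\mathcal{D}_2\pm i\mathcal{D}_3,\cdot\,]\|$, then yields the Lipschitz bound. Without such a strengthening, an extra argument exploiting the special structure of $\beta_n$ would be needed, and neither your proposal nor the paper supplies it.
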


First, let us verify that $(E_n, L_{\mathcal{D}_E}|_{E_n})$ is a compact quantum metric space for each $n\in\bbn$. By \Pref{pf of sub}, we have that each $E_n$ is a complete sub-operator system of $E$. Therefore, it is enough to show that $E_n\cap\mathcal{E}$ is dense in $E_n$ for each $n\in\bbn$, since $\ast$-invariance is obvious anyway.

The following density lemma is an easy consequence of the triangle inequality and density of the algebraic tensor product.
\begin{lmma}\label{dense}
Let $K_1, B_1\mbox{ and }A_1$ be closed subspaces of $\mathcal{K}(PH_A), B\mbox{ and }A$ respectively. Consider the norm-dense subspaces $K_2, B_2\mbox{ and }A_2$ of $K_1, B_1\mbox{ and }A_1$ respectively. Then, the subspace $J_2=K_2\odot B_2+PA_2P\otimes\bbc$ is norm-dense in $J_1=K_1\otimes B_1+PA_1P\otimes\bbc$, where `$\odot$' denotes the algebraic tensor product.
\end{lmma}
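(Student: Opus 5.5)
We argue by a direct $\varepsilon/3$-type approximation. Let $j=k+PaP\otimes\lambda$ be an element of $J_1$, with $k\in K_1\otimes B_1$, $a\in A_1$ and $\lambda\in\bbc$; absorbing $\lambda$ into $a$, we may take $\lambda=1$. Fix $\varepsilon>0$. We approximate the two summands separately and then combine them with the triangle inequality
\[
\|j-j_2\|\le\|k-k_2\|+\|PaP\otimes 1-Pa_2P\otimes 1\|,
\]
where $k_2\in K_2\odot B_2$, $a_2\in A_2$ and $j_2:=k_2+Pa_2P\otimes 1\in J_2$. The norm here is the norm of $E$, i.e. the operator norm under the faithful representation, equivalently under $\Pi$ or $\pi$.

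For the second summand, we use $\|PaP\otimes 1-Pa_2P\otimes 1\|=\|P(a-a_2)P\|\le\|a-a_2\|$, since $P$ is a projection and the spatial tensor norm is a cross norm. Density of $A_2$ in $A_1$ then gives $a_2\in A_2$ with $\|a-a_2\|<\varepsilon/2$.

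For the first summand, we use the definition of $K_1\otimes B_1$ as the closure of $K_1\odot B_1$ in the minimal (spatial) tensor norm inside $\mathcal{K}(PH_A)\otimes B$. First pick a finite sum $\sum_{i=1}^m T_i\otimes b_i$ with $T_i\in K_1$, $b_i\in B_1$, within $\varepsilon/4$ of $k$. Then approximate each $T_i$ by some $T_i'\in K_2$ and each $b_i$ by some $b_i'\in B_2$, using the cross-norm estimate
\[
\|T\otimes b-T'\otimes b'\|\le\|T-T'\|\,\|b\|+\|T'\|\,\|b-b'\|.
\]
We choose the tolerances (depending on $m$, $\max_i\|b_i\|$ and $\max_i\|T_i\|+1$) so that the total error stays below $\varepsilon/4$. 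This yields $k_2\in K_2\odot B_2$ with $\|k-k_2\|<\varepsilon/2$.

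The only point needing care is that the sum $K_1\otimes B_1+PA_1P\otimes\bbc$ is computed inside $E$ with its $C^*$-norm, so we must check that each summand's norm is controlled by the norm of $E$. This holds because the norm of $E$ restricted to each summand coincides with the spatial norms used above: $\pi$ is faithful on the ideal $\mathcal{K}(PH_A)\otimes B$ and acts spatially there, and on $PAP\otimes 1$ it acts as $P\pi_A(\cdot)P\otimes 1$. This check is essentially bookkeeping, and we do not expect a genuine obstacle; no direct-sum norm comparison is needed, since we only bound the norm of a sum from above.
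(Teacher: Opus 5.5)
Your proposal is correct and takes essentially the same approach as the paper, which dismisses the lemma as an easy consequence of the triangle inequality and the density of the algebraic tensor product. You spell out exactly that argument: split the element into its tensor and $PAP$ parts, approximate the tensor part by elementary tensors and then each factor using the cross-norm estimate, and bound the other part by $\|P(a-a_2)P\|\le\|a-a_2\|$.
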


\begin{ppsn}\label{4.6}
We have $E_n\cap\mathcal{E}$ is norm-dense in $E_n$ for each $n\in\bbn$.
\end{ppsn}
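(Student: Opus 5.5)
The plan is to apply \Lref{dense} with $K_1=Q_n\mathcal{K}(PH_A)Q_n$, $B_1=B$, $A_1=A_n$, and with dense subspaces $K_2$, $B_2$, $A_2$ chosen so that $J_2=K_2\odot B_2+PA_2P\otimes\bbc$ lies inside $E_n\cap\mathcal{E}$. Then $J_1=E_n$, and $J_2\subseteq E_n\cap\mathcal{E}\subseteq E_n$ forces $E_n\cap\mathcal{E}$ to be norm-dense in $E_n$.

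The choices are $B_2=\mathcal{B}$, which is dense in $B$ since $(\mathcal{B},H_B,\mathcal{D}_B)$ is a spectral triple, and $A_2=A_n\cap\mathcal{A}$, which is dense in $A_n$ by hypothesis. For $K_2$ I take $Q_n\mathcal{C}(PH_A)Q_n$. Since $Q_n$ has finite rank, $K_1=Q_n\mathcal{K}(PH_A)Q_n$ is the finite-dimensional space of all operators on $Q_nPH_A$, so $K_1$ is closed.

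Two facts about $K_2$ need checking.
\begin{itemize}
\item \emph{$K_2\subseteq\mathcal{C}(PH_A)$.} Here $Q_n$ is a finite sum of spectral projections of $(P\mathcal{D}_A|_{PH_A})^{-1}$, and these are also spectral projections of $P\mathcal{D}_A$ because $P$-injectivity makes this inverse well defined. So $Q_n$ is a finite-rank projection onto a subspace of $\mathrm{dom}(\mathcal{D}_A)$, and it commutes with $\mathcal{D}_A$ on $PH_A$. This is exactly the situation of Step 1 in the proof of \Pref{reqd density}, with $\widetilde{Q}_n$ replaced by $Q_n$. Running that argument shows $Q_nTQ_n\in\mathcal{C}(PH_A)$ for every $T\in\mathcal{C}(PH_A)$. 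More directly, every finite-rank operator with range and co-range in $Q_nPH_A$ is already differentiable, since $\mathcal{D}_AQ_n$ and $Q_n\mathcal{D}_A$ are bounded.
\item \emph{$K_2$ is dense in $K_1$.} The compression $T\mapsto Q_nTQ_n$ is norm-continuous and maps $\mathcal{K}(PH_A)$ onto $K_1$. Since $\mathcal{C}(PH_A)$ is dense in $\mathcal{K}(PH_A)$, its image $K_2$ is dense in $K_1$. In fact $K_2=K_1$, because $Q_n$ itself lies in $\mathcal{C}(PH_A)$ and any operator on the finite-dimensional space $Q_nPH_A$ has the form $Q_nTQ_n$ with $T$ finite rank and differentiable.
\end{itemize}

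With these choices the elements $k\otimes b$ ($k\in K_2$, $b\in\mathcal{B}$) and $PaP\otimes1$ ($a\in A_n\cap\mathcal{A}$) are among the generators \Eref{our domain} of $\mathcal{E}$, so $J_2\subseteq\mathcal{E}$. They obviously lie in $E_n$ as well, so $J_2\subseteq E_n\cap\mathcal{E}$. \Lref{dense} then gives that $J_2$ is dense in $J_1=E_n$, which proves the proposition.

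The only step needing care is verifying $Q_n\mathcal{C}(PH_A)Q_n\subseteq\mathcal{C}(PH_A)$. This rests on $Q_n$ being a finite-rank spectral projection of $\mathcal{D}_A$ restricted to $PH_A$, which uses $[P,\mathcal{D}_A]=0$ and $P$-injectivity. Everything else follows directly from \Lref{dense}.
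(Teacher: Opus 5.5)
Your proposal is correct and matches the paper's proof: you make the same choices $K_2=Q_n\mathcal{C}(PH_A)Q_n$, $B_2=\mathcal{B}$, $A_2=A_n\cap\mathcal{A}$ in \Lref{dense}, and you get density of $K_2$ in $K_1$ from the density of $\mathcal{C}(PH_A)$ in $\mathcal{K}(PH_A)$ under compression by $Q_n$. The only cosmetic difference is that the paper deduces $Q_n\mathcal{C}(PH_A)Q_n\subseteq\mathcal{C}(PH_A)$ from $Q_n\in\mathcal{C}(PH_A)$ together with $\mathcal{C}(PH_A)$ being an algebra, whereas you verify it directly and also observe that $K_2=K_1$ by finite-dimensionality.
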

\begin{prf}
Fix any $n\in\bbn$, and observe that $\mathcal{C}(PH_A)$ (recall \Dref{differentiable compacts}) contains all the spectral projections of $(P\mathcal{D}_A|_{PH_A})^{-1}$, and hence each $Q_n$. Hence, $Q_n\mathcal{C}(PH_A)Q_n\odot \mathcal{B}+P(A_n\cap\mathcal{A})P\otimes\mathbb{C}\subseteq E_n\cap\mathcal{E}$. Now in Lemma \ref{dense} take $K_1=Q_n\mathcal{K}(PH_A)Q_n,\,B_1=B,\,A_1=A_n,\,K_2=Q_n\mathcal{C}(PH_A)Q_n,\,B_2=\mathcal{B},\,A_2=A_n\cap \mathcal{A}$. Then, $B_2\mbox{ and }A_2$ are dense in $B_1\mbox{ and }A_1$ respectively (we have assumed that each $A_n\cap\mathcal{A}$ is dense in $A_n$ in \Tref{Quotient}). It remains only to verify that $K_2$ is dense in $K_1$. However, this follows from the fact that $\mathcal{C}(PH_A)$ is dense in $\mathcal{K}(PH_A)$ and $Q_n$'s are orthogonal projections. Therefore, by \Lref{dense} we have $Q_n\mathcal{C}(PH_A)Q_n\odot \mathcal{B}+P(A_n\cap\mathcal{A})P\otimes\mathbb{C}$ is dense in $E_n$, and hence $E_n\cap\mathcal{E}$ is dense in $E_n$.\qed
\end{prf}

The above Proposition, combined with Proposition \ref{pf of sub}, implies that $(E_n, L_{\mathcal{D}_E}|_{E_n})$ is a compact quantum metric space for each $n\in\bbn$. 
\medskip

Consider $Y:=(P\mathcal{D}_A|_{PH_A})^{-1}\in\mathcal{K}(PH_A)$ and let $P_k$ be the spectral projections of $Y$ (we use same notations used in \cite{HZ}). Set $Q_n:=\sum_{k=1}^n\,P_k$ and note that each $Q_n$ is a finite-rank projection on $PH_A$. Let $\{\beta_n\}_{n\in\bbn}$ be a unital $2$-contractive approximation of Toeplitz type associated to the pair $(A,\{A_n\}_{n\in\bbn})$ (\Dref{def 2}). Recall the faithful representation $\pi:E\to B(H_A\otimes H_B)$ (see \Sref{Sec3.3}) and note that $\pi(E)\subseteq B(H_A)\otimes B(H_B)$. For each $n\in\bbn$, define the following linear map
\begin{IEEEeqnarray}{lCl}\label{map0}
\alpha_n:B(H_A)\otimes B(H_B)\longrightarrow B(H_A\otimes H_B)\,,\qquad
\alpha_n:=\beta_n\otimes\mathrm{id}.
\end{IEEEeqnarray}
Note that $\alpha_n$'s defined in \Eref{map0} are continuous. By \Dref{def 2}, it follows that
\[
\alpha_n(x+PaP \otimes 1)=(Q_n\otimes 1)x(Q_n\otimes 1)+\beta_n(PaP)\otimes 1
\]
where $x\in\mathcal{K}(PH_A)\otimes B$ and $a\in A$.

\begin{lmma}\label{Image of alpha_n}
For each $n\in\bbn$, the image of $\mathcal{E}$ under the map $\alpha_n$ is contained in $E_n\cap\mathcal{E}$, where $$E_n= Q_n\mathcal{K}(PH_A)Q_n \otimes B+ PA_nP \otimes\bbc$$ and $\alpha_n$ is as defined in \Eref{map0}.
\end{lmma}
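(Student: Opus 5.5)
The plan is to work generator by generator and then pass to the algebra. I use the formula recorded just before the lemma:
\[
\alpha_n(x+PaP\otimes 1)=(Q_n\otimes 1)x(Q_n\otimes 1)+\beta_n(PaP)\otimes 1 .
\]
This formula follows from conditions $(b)$ and $(c)$ of point $(iii)$ in \Dref{def 2}. The map $\alpha_n$ is linear but not multiplicative, so it is not enough to check the generators \Eref{our domain} alone. I would first describe $\mathcal{E}$ as a linear span. Every product of generators is again of the form $x+PaP\otimes 1$, where $x$ is a finite sum of elementary tensors $k\otimes b$ with $k\in\mathcal{C}(PH_A)$ and $b\in\mathcal{B}$, and $a$ lies in the unital $\ast$-algebra generated by $\mathcal{A}$.

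The reason is as follows. A product $(PaP)(k\otimes b)$ equals $PaPk\otimes b$. Here $PaPk=PaP\cdot k$, and this lies in $\mathcal{C}(PH_A)$ because $\mathcal{C}$ is an algebra. It is closed under multiplication by $P\mathcal{A}P$, since $[P\mathcal{D}_A,a]$ is bounded by \Pref{Toeplitz check}, so $\mathrm{cl}(\mathcal{D}_A PaPk)=PaP\,\mathrm{cl}(\mathcal{D}_Ak)+P[P\mathcal{D}_A,a]Pk$ is bounded. Products of the form $(PaP)(Pa'P)$ equal $Paa'P-Pa[P,a']P$. The correction term $Pa[P,a']P$ lies in $\mathcal{C}(PH_A)$, using $[P,a']\in\mathcal{C}(H_A)$ from \Dref{Toeplitz type spectral}; it is absorbed into the compact part with $b=1$.

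So a general element of $\mathcal{E}$ has the form
\[
e=\sum_j k_j\otimes b_j+PaP\otimes 1,\qquad k_j\in\mathcal{C}(PH_A),\ b_j\in\mathcal{B},\ a\in\mathcal{A}
\]
(or $a$ in the $\ast$-algebra generated by $\mathcal{A}$; I expect this to be the same thing in the intended setting). Applying $\alpha_n$ gives
\[
\alpha_n(e)=\sum_j Q_nk_jQ_n\otimes b_j+P\beta_n(a)P\otimes 1,
\]
using $\beta_n(PaP)=P\beta_n(a)P$ from condition $(b)$.

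I then check the two pieces separately. For the compact piece, $Q_nk_jQ_n$ lies in $Q_n\mathcal{K}(PH_A)Q_n$. It also lies in $\mathcal{C}(PH_A)$, because $Q_n$ is a finite spectral projection of $P\mathcal{D}_A$ and so belongs to $\mathcal{C}(PH_A)$, exactly as used in \Pref{4.6}, and $\mathcal{C}(PH_A)$ is an algebra. Hence $Q_nk_jQ_n\otimes b_j\in E_n\cap\mathcal{E}$. For the scalar piece, condition $(i)$ gives $\beta_n(a)\in A_n\cap\mathcal{A}$, so $P\beta_n(a)P\otimes 1$ lies in $PA_nP\otimes\bbc$. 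It also lies in $\mathcal{E}$, since it is a generator. Summing the two pieces, and using that $E_n$ and $\mathcal{E}$ are both subspaces, gives $\alpha_n(\mathcal{E})\subseteq E_n\cap\mathcal{E}$.

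The main obstacle is the normal-form step: justifying that every element of $\mathcal{E}$ can be written with a single quotient term $PaP\otimes 1$ with $a\in\mathcal{A}$. This needs the compression defect $Pa[P,a']P$ to land in $\mathcal{C}(PH_A)$, and needs $aa'$ to be handled. If $\mathcal{A}$ is only an operator system and not an algebra, I would instead allow $a$ to range over the algebra generated by $\mathcal{A}$ and note that condition $(i)$ is applied to elements of $\mathcal{A}$, as in the paper's displayed formula for $\alpha_n$. A secondary point is that $\alpha_n=\beta_n\otimes\mathrm{id}$ on finite tensors is well defined and agrees with the displayed formula, which holds by linearity.
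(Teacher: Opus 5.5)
Your proposal is correct and is essentially the paper's argument. The paper also checks the claim on elements $\sum_j k_j\otimes b_j+PaP\otimes 1$, using $Q_n\in\mathcal{C}(PH_A)$, the fact that $\mathcal{C}(PH_A)$ is an algebra, $\beta_n(\mathcal{A})\subseteq A_n\cap\mathcal{A}$, and $\beta_n(PaP)=P\beta_n(a)P$; your normal-form justification for elements of $\mathcal{E}$, which the paper takes for granted, is a useful addition, and its only slip is a harmless sign, since $PaPa'P=Paa'P+Pa[P,a']P$.
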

\begin{prf}
Fix any $n\in\bbn$, and recall that $Q_n\in\mathcal{C}(PH_A)$. Thus, if $T\in \mathcal{C}(PH_A)$, then $Q_nTQ_n\in \mathcal{C}(PH_A)$ as $\mathcal{C}(PH_A)$ is a subalgebra of $\mathcal{K}(PH_A)$. Since $\{\beta_n\}_{n\in\bbn}$ is a unital $2$-contractive approximation of Toeplitz type for the pair $(A, \{A_n\}_{n\in\bbn})$, we have $\beta_n(\mathcal{A})\subseteq A_n\cap\mathcal{A}$ and $\beta_n(PaP)=P\beta_n(a)P$ (see \Dref{def 2}), concluding the proof.\qed
\end{prf}

The immediate consequence is the following.
\begin{crlre}
The image of the map $\alpha_n$ restricted to $E$ is contained in $E_n.$
\end{crlre}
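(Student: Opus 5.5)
The plan is to deduce this from \Lref{Image of alpha_n} by a density and continuity argument. The map $\alpha_n=\beta_n\otimes\mathrm{id}$ is bounded on the relevant domain. Its image on the dense subalgebra $\mathcal{E}$ lies in $E_n\cap\mathcal{E}\subseteq E_n$, and $E_n$ is norm-closed in $E$ by \Pref{pf of sub}. Hence it suffices to check two things: that $\alpha_n$ is continuous on $\pi(E)$, and that $\alpha_n$ maps $E$ into the closure of $\alpha_n(\mathcal{E})$.

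First I would fix $e\in E$ and write it as $e=x+PaP\otimes 1$ with $x\in\mathcal{K}(PH_A)\otimes B$ and $a\in A$, using \Dref{Toeplitz extension}. By \Tref{2.11} the subalgebra $\mathcal{E}$ is dense in $E$, so I choose $e_k\in\mathcal{E}$ with $e_k\to e$ in norm. Concretely, I would take $e_k=x_k+Pa_kP\otimes 1$ with $x_k\in\mathcal{C}(PH_A)\odot\mathcal{B}$ and $a_k\in\mathcal{A}$, using \Lref{dense} with $K_2=\mathcal{C}(PH_A)$, $B_2=\mathcal{B}$ and $A_2=\mathcal{A}$.

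The argument in \Lref{S}, via the faithful representation $\Pi$, shows that $a_k\to a$ and $x_k\to x$ separately. Applying the formula just before \Lref{Image of alpha_n} gives
\[
\alpha_n(e_k)=(Q_n\otimes 1)x_k(Q_n\otimes 1)+P\beta_n(a_k)P\otimes 1 .
\]
The first term converges to $(Q_n\otimes1)x(Q_n\otimes1)$, which lies in $Q_n\mathcal{K}(PH_A)Q_n\otimes B$; this space is closed by \Lref{Q_m}. The second term converges to $P\beta_n(a)P\otimes 1$, because $\beta_n$ is bounded, being a unital $2$-contractive and hence contractive map. Each $\beta_n(a_k)$ lies in $A_n$ and $A_n$ is closed, so $\beta_n(a)\in A_n$. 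Therefore $\alpha_n(e)=\lim_k\alpha_n(e_k)$, and this limit lies in $E_n$, using \Lref{S} and the closedness of $E_n$.

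The main obstacle is the well-definedness and continuity of $\alpha_n=\beta_n\otimes\mathrm{id}$ on $\pi(E)$. A map that is only $2$-contractive need not have a bounded tensor extension to all of $B(H_A)\otimes B(H_B)$. I would bypass this by working only with the explicit decomposition above. On elementary tensors $T\otimes b$ with $T\in\mathcal{K}(PH_A)$, conditions $(iii)(b)$--$(c)$ of \Dref{def 2} together with continuity of $\beta_n$ give $\beta_n(TP)=Q_nTQ_nP$. Hence on the ideal part, $\alpha_n$ agrees with compression by the projection $Q_n\otimes 1$, which is contractive. On the quotient part, $\alpha_n$ acts as $\beta_n\otimes 1$, which is bounded. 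Since the decomposition of $E$ into these two parts is direct and topologically controlled (\Lref{S}), this makes the limit argument legitimate.
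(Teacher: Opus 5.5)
Your proposal is correct and follows the paper's argument: density of $\mathcal{E}$ in $E$, the inclusion $\alpha_n(\mathcal{E})\subseteq E_n\cap\mathcal{E}$ from \Lref{Image of alpha_n}, continuity of $\alpha_n$, and closedness of $E_n$ from \Pref{pf of sub}. The one difference is that the paper simply asserts that $\alpha_n=\beta_n\otimes\mathrm{id}$ is continuous, whereas you justify this on $E$. On the ideal part you identify $\alpha_n$ with compression by $Q_n\otimes 1$, using condition $(iii)(c)$ of \Dref{def 2} extended by density of $\mathcal{C}(PH_A)$. On the quotient part you identify it with $\beta_n$, using the bounded decomposition $e\mapsto(x,a)$ coming from \Lref{S}. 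This is a worthwhile addition, since for a merely $2$-contractive $\beta_n$ the map $\beta_n\otimes\mathrm{id}$ need not be bounded on the spatial tensor product.
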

\begin{prf}
Since $\mathcal{E}$ is dense in $E$ and $\alpha_n(\mathcal{E})\subseteq E_n$ by \Lref{Image of alpha_n}. The result is then followed by using that $\alpha_n$ is continuous and $E_n$ is closed in $E$ by Proposition \ref{pf of sub}.\qed
\end{prf}

\begin{ppsn}\label{3.6}
For any $\varepsilon>0$, there exists $N\in\bbn$ such that $\forall\,n\geq N$, we have
\[
\|e-\alpha_n(e)\|\leq\varepsilon L_{\mathcal{D}_E}(e)
\]
for any $e\in \mathcal{E}$, where $\alpha_n$ is as defined in \Eref{map0}.
\end{ppsn}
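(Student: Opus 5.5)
Write $e=x+PaP\otimes 1\in\mathcal{E}$ with $x$ in the algebraic span of elementary tensors $k\otimes b$, $k\in\mathcal{C}(PH_A)$, $b\in\mathcal{B}$, and $a\in\mathcal{A}$. By the formula for $\alpha_n$ recorded before \Lref{Image of alpha_n},
\[
e-\alpha_n(e)=\big(x-(Q_n\otimes 1)x(Q_n\otimes 1)\big)+\big(PaP-P\beta_n(a)P\big)\otimes 1 ,
\]
where we used condition $(b)$ of \Dref{def 2}. We estimate the two pieces separately, each by a multiple of $L_{\mathcal{D}_E}(e)$. The key preliminary observation is that $L_{\mathcal{D}_E}(e)$ dominates the norms of all the blocks appearing in \Eref{commutator 1} and \Eref{commutator 2}. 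In particular,
\[
\|[\mathcal{D}_A,a]\|\le L_{\mathcal{D}_E}(e),\qquad
\big\|[P\mathcal{D}_A,a]P\otimes 1+(P\mathcal{D}_A\otimes 1)x\big\|\le L_{\mathcal{D}_E}(e),
\]
and similarly for the block $P[P\mathcal{D}_A,a]\otimes 1-x(P\mathcal{D}_A\otimes 1)$.

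For the quotient piece, $\|PaP-P\beta_n(a)P\|\le\|a-\beta_n(a)\|\le\varepsilon' L_{\mathcal{D}_A}(a)\le\varepsilon' L_{\mathcal{D}_E}(e)$ for $n\ge N_{\varepsilon'}$, by condition $(ii)$ of \Dref{def 2} and the first block bound. Here $L_{\mathcal{D}_A}(a)=\|[\mathcal{D}_A,a]\|$ and $\Pi_1$ sees only $a$.

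For the ideal piece, write $x-(Q_n\otimes1)x(Q_n\otimes1)=((1-Q_n)P\otimes1)x+(Q_n\otimes1)x((1-Q_n)P\otimes1)$; recall that $x=(P\otimes 1)x(P\otimes 1)$. Since $Q_n$ commutes with $P\mathcal{D}_A$, we have
\[
(1-Q_n)P=Y(1-Q_n)\,P\mathcal{D}_A \quad\text{on } PH_A ,
\]
with $\|Y(1-Q_n)\|=\lambda_{n+1}\to0$, where $\lambda_{n+1}$ is the largest eigenvalue of $|Y|$ beyond the first $n$ spectral projections. Hence
\[
\|((1-Q_n)P\otimes 1)x\|\le\lambda_{n+1}\,\|(P\mathcal{D}_A\otimes1)x\|
\le\lambda_{n+1}\big(L_{\mathcal{D}_E}(e)+\|[P\mathcal{D}_A,a]\|\big),
\]
using the lower-left block of $[\mathcal{D}_2,\Pi_2(e)]$. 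The extra term satisfies $\|[P\mathcal{D}_A,a]\|\le C\,L_{\mathcal{D}_E}(e)$: the diagonal block controls $[(1-P)\mathcal{D}_A,a]$, and $[\mathcal{D}_A,a]=[P\mathcal{D}_A,a]+[(1-P)\mathcal{D}_A,a]$ is controlled by the $\Pi_1$ block. So this term is at most $3\lambda_{n+1}L_{\mathcal{D}_E}(e)$. The right-hand term is handled symmetrically using $x(P\mathcal{D}_A\otimes 1)$ from the upper-right block, with $(1-Q_n)P=P\mathcal{D}_A\,(1-Q_n)Y$. These manipulations are done on $\mathrm{dom}(\mathcal{D}_A)\odot H_B$ and then passed to closures, which is legitimate since $k\in\mathcal{C}(PH_A)$ gives bounded closures $\mathrm{cl}(k\mathcal{D}_A)$ and $\mathrm{cl}(\mathcal{D}_Ak)$. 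This part is uniform in $e$ and needs no property of $\beta_n$ beyond $(c)$.

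Combining the two pieces,
\[
\|e-\alpha_n(e)\|\le\big(\varepsilon'+6\lambda_{n+1}\big)L_{\mathcal{D}_E}(e).
\]
Given $\varepsilon$, take $\varepsilon'=\varepsilon/2$ and choose $N$ so large that $n\ge N$ gives both $n\ge N_{\varepsilon'}$ and $6\lambda_{n+1}\le\varepsilon/2$; this is possible because $Y$ is compact, so its eigenvalues tend to $0$.

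The main obstacle is the off-diagonal block. Its norm controls only the sum $[P\mathcal{D}_A,a]P\otimes1+(P\mathcal{D}_A\otimes1)x$, not $(P\mathcal{D}_A\otimes 1)x$ alone. This is why $[P\mathcal{D}_A,a]$ has to be bounded separately, via the Toeplitz-type bound from \Pref{Toeplitz check} as above. The care needed there is to confirm that the relevant constants are absolute, independent of $e$ and $n$.
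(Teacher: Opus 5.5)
Your proof is correct and follows the same route as the paper. You split $e-\alpha_n(e)$ into the ideal piece $x-(Q_n\otimes 1)x(Q_n\otimes 1)$, which you control using $\|(P\mathcal{D}_A\otimes 1)x\|,\ \|x(P\mathcal{D}_A\otimes 1)\|\le 3L_{\mathcal{D}_E}(e)$ and compactness of $Y$, and the quotient piece, which you control by condition $(ii)$ of \Dref{def 2}. The only differences are that you derive the constant $3$ directly from the blocks of \Eref{commutator 1} and \Eref{commutator 2}, where the paper cites the proof of Lemma 5.4 in \cite{HZ}, and that you argue homogeneously in $L_{\mathcal{D}_E}(e)$ rather than normalizing to $L_{\mathcal{D}_E}(e)\le 1$ and treating $L_{\mathcal{D}_E}(e)=0$ separately.
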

\begin{prf}
Let $e=x+PaP \otimes 1 \in \mathcal{E}$, and $L_{\mathcal{D}_E}(e)\leq 1$. Then, from the proof of Lemma $5.4$ in \cite{HZ}, we have the following inequalities\,:
\begin{enumerate}[$(i)$]
\item $\|[\mathcal{D}_A, a]\|\leq 1$;
\item $\|(P\mathcal{D}_A \otimes 1)x\|\leq 3$;
\item $\|x(P\mathcal{D}_A \otimes 1)\|\leq 3$.
\end{enumerate}
Moreover, for $Y=(P\mathcal{D}_A|_{PH_A})^{-1}\in\mathcal{K}(PH_A)$, we also have the following\,:
\begin{IEEEeqnarray*}{lCl}
\|x-x(Q_n\otimes 1)\| &\leq& \|x(P\mathcal{D}_A\otimes 1)\|\,\|Y\otimes 1-YQ_n\otimes 1\|\\
&\leq& 3\|Y-YQ_n\|.
\end{IEEEeqnarray*}
Since $Q_n\to I_{PH_A}$ strongly with $\sup_n\|Q_n-I_{PH_A}\|\leq 2$, and $Y$ is a compact operator that commutes with $Q_n$, we have $\|Y-YQ_n\|=\|(I_{PH_A}-Q_n)Y\|\to 0$ as $n\to\infty$. Therefore, there exists $n_1\in\bbn$ such that
$$\|Y-YQ_n\|\leq\frac{\varepsilon}{12}$$ for all $n\geq n_1$. Then for all $n\geq n_1,$ we obtain the following:
\[
\|x-x(Q_n\otimes 1)\|\leq \frac{\varepsilon}{4}\,.
\]
Similarly, we have
\begin{IEEEeqnarray*}{lCl}
\|x-(Q_n\otimes 1)x\| &\leq& \|(Y-Q_nY)\|\,\|( P\mathcal{D}_A\otimes 1)x\|\\
&\leq& 3\|Y-Q_nY\|.
\end{IEEEeqnarray*}
For the similar reason as above, choose $n_2\in\bbn$ such that $\|Y-Q_nY\|\leq\frac{\varepsilon}{12}$ for all $n\geq n_2$. Then for all $n\geq n_2,$ we obtain
\[
\|x-(Q_n\otimes 1)x\|\leq \frac{\varepsilon}{4}\,.
\]
Therefore, for all $n\geq n_0:=\max\{n_1,n_2\}$, we obtain the following:
\begin{IEEEeqnarray}{lCl}\label{need here 1}
\|x-(Q_n\otimes 1)x(Q_n\otimes 1)\| &\leq& \|x-x(Q_n\otimes 1)\|+\|x-(Q_n\otimes 1)x\|\,\|Q_n\otimes 1\|\nonumber\\
&\leq& \frac{\varepsilon}{2}\, .
\end{IEEEeqnarray}
Since $\{\beta_n\}_{n\in\mathbb{N}}$ is a unital 2-contractive approximation of Toeplitz type, for all $n\geq 1$ and for all $a\in\mathcal{A}$, we have $\beta_n(Pa)=P\beta_n(a)\mbox{ and }\beta_n(aP)=\beta_n(a)P$. Then, we obtain the following:
\begin{IEEEeqnarray*}{lCl}
\|e-\alpha_n(e)\| &=& \|x+PaP \otimes 1 -\alpha_n(x+PaP \otimes 1 )\|\\
&=&\|x-(Q_n \otimes 1)x(Q_n \otimes 1) + PaP \otimes 1- \beta_n(PaP) \otimes 1\|\\
&=&\|x-(Q_n \otimes 1)x(Q_n \otimes 1) + PaP \otimes 1- P\beta_n(a)P \otimes 1\|\\
&\leq&\|x-(Q_n \otimes 1)x(Q_n \otimes 1)\|+ \|P(a-\beta_n(a))P \otimes 1 \|\\
&\leq&\|x-(Q_n \otimes 1)x(Q_n \otimes 1)\|+ \|a - \beta_n(a) \|\,.
\end{IEEEeqnarray*}
Again using the properties of $\{\beta_n\}_{n\in\mathbb{N}}$, there exists $n_3 \in \mathbb{N}$ such that for all $a \in \mathcal{A}$ and for all $n \geq n_3$, we have
\begin{IEEEeqnarray}{lCl}\label{need here 2}
\|a - \beta_n(a) \|\leq \frac{\varepsilon}{2}\,L_{\mathcal{D}_A}(a)=\frac{\varepsilon}{2}\,\|[\mathcal{D}_A, a]\|.
\end{IEEEeqnarray}
Because $\|[\mathcal{D}_A, a]\| \leq 1$, we get that
\begin{IEEEeqnarray*}{lCl}
\|e-\alpha_n(e)\| &\leq& \frac{\varepsilon}{2}+\frac{\varepsilon}{2}= \varepsilon
\end{IEEEeqnarray*}
for all $n\geq N:=\max\{n_0,n_3\}$ from \Eref{need here 1} and \Eref{need here 2}.

Finally, let $e \in \mathcal{E}$ be such that $L_{\mathcal{D}_E}(e) \neq 0$. Then, $\,L_{\mathcal{D}_E}\Big(\frac{e}{L_{\mathcal{D}_E}(e)}\Big)= 1$. Hence, the linearity of $\alpha_n$ implies that
\[
\|e-\alpha_n(e)\|\leq \varepsilon\,L_{\mathcal{D}_E}(e)
\]
for all $n\geq N$. Moreover, if $L_{\mathcal{D}_E}(e)=0$, then $e=\lambda P\otimes 1$ for some $\lambda \in \mathbb{C}$, and the unitality of $\beta_n$ implies that $\|e-\alpha_n(e)\|=0$. This concludes the proof.\qed
\end{prf}

By \Lref{Image of alpha_n}, we have $\alpha_n(e)\in\mathcal{E}$ for all $e\in\mathcal{E}$. Therefore, $L_{\mathcal{D}_E}(\alpha_n(e))$ makes sense, and we have the following result.
\begin{ppsn}\label{4.13}
For all $n\in\bbn$ and $e\in \mathcal{E}$, we have $L_{\mathcal{D}_E}(\alpha_n(e))\leq L_{\mathcal{D}_E}(e)$, where $\alpha_n$ is as defined in \Eref{map0}.
\end{ppsn}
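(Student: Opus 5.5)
We need $\|[\mathcal{D}_E,\Pi(\alpha_n(e))]\|\le\|[\mathcal{D}_E,\Pi(e)]\|$ for $e=x+PaP\otimes 1\in\mathcal{E}$. The plan is to realize the block matrix \Eref{commutator 2} for $\alpha_n(e)$ as the image of the block matrix for $e$ under an amplification of $\beta_n\otimes\mathrm{id}$, conjugated by a fixed unitary. Then $2$-contractivity of $\beta_n$ gives the bound. The unitaries $u$ (resp.\ $v$) in \Dref{def 2}$(iii)$ are there exactly to handle the places where the commutator does not literally intertwine with $\beta_n$.

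\textbf{Step 1: entrywise identities.} I would first compute each entry of the commutator for $\alpha_n(e)=(Q_n\otimes 1)x(Q_n\otimes 1)+P\beta_n(a)P\otimes 1$. For the $\mathcal{D}_1$ block, $[\mathcal{D}_A,\beta_n(a)]$ is obtained by combining condition $(a)$ for $P\mathcal{D}_A$ and for $(1-P)\mathcal{D}_A$, which gives $\beta_n(\mathrm{Ad}_v[\mathcal{D}_A,a])=\mathrm{Ad}_v[\mathcal{D}_A,\beta_n(a)]$. For the off-diagonal entries of the $\mathcal{D}_2$ block:
\begin{itemize}
\item $P[P\mathcal{D}_A,\beta_n(a)]$ is handled by condition $(b)$, which lets $P$ pass through $\beta_n$ on the terms $\mathrm{Ad}_v([P\mathcal{D}_A,a])$.
\item $(P\mathcal{D}_A\otimes1)\alpha_n(x)$ and $\alpha_n(x)(P\mathcal{D}_A\otimes 1)$ are handled by condition $(d)$ applied to elementary tensors $T\otimes b$, extended by linearity and continuity. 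Since $Q_n$ commutes with $P\mathcal{D}_A$, these equal $(Q_n\otimes1)\,\mathrm{cl}(P\mathcal{D}_Ax)\,(Q_n\otimes 1)$ up to $\mathrm{Ad}_{v\otimes1}$.
\end{itemize}
For the $\mathcal{D}_3$ entry, $[1\otimes\mathcal{D}_B,(Q_n\otimes1)x(Q_n\otimes1)]=(Q_n\otimes1)[1\otimes\mathcal{D}_B,x](Q_n\otimes1)$, since $Q_n\otimes1$ commutes with $1\otimes\mathcal{D}_B$. By condition $(c)$ this is again $\alpha_n$ applied to the entry, and since $vQ_nP=Q_nPv$ it is compatible with $\mathrm{Ad}_{v\otimes 1}$.

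\textbf{Step 2: assemble a unitary $W$.} The conclusion of Step 1 should be
\[
[\mathcal{D}_E,\Pi(\alpha_n(e))]=W\,(\alpha_n)_{m}\big(W^*[\mathcal{D}_E,\Pi(e)]W\big)\,W^*,
\]
where $W$ is a diagonal unitary built from copies of $v\otimes1$ and $(\alpha_n)_m$ is the entrywise application of $\alpha_n$ to the $m\times m$ operator matrix.

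\textbf{Step 3: norm control.} The main obstacle is that $\beta_n$ is only $2$-contractive, while the commutator is a block matrix of larger size ($2\times2$ for $\mathcal{D}_1$, $4\times4$ for $\mathcal{D}_I$). A second issue is that $\beta_n\otimes\mathrm{id}$ on $B(H_A)\otimes B(H_B)$ need not be contractive in general. My plan is to exploit sparsity:
\begin{itemize}
\item The $\mathcal{D}_1$ block is diagonal, so its norm is the max of the diagonal entries, and each entry is controlled by $\|\beta_n\|\le1$ together with $\|\beta_n\otimes\mathrm{id}\|$ restricted to elements of the form $y\otimes 1$.
\item The $\mathcal{D}_I$ block is off-diagonal, so its norm equals $\max\{\|[\mathcal{D}_2-i\mathcal{D}_3,\cdot]\|,\|[\mathcal{D}_2+i\mathcal{D}_3,\cdot]\|\}$, and each of these is a $2\times2$ operator matrix.
\end{itemize}
I would use $2$-contractivity exactly on these $2\times2$ matrices. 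The ideal-type entries involve $x$, whose $\alpha_n$-image is a compression by the projection $Q_n\otimes 1$ (condition $(c)$), so that part is automatically contractive. To make this rigorous, I would write each $2\times2$ matrix as the sum of its $a$-part and its $x$-part. The $x$-part equals $\mathrm{diag}(Q_n\otimes1,Q_n\otimes1)(\cdot)\,\mathrm{diag}(Q_n\otimes1,Q_n\otimes1)$, and the $a$-part is $(\beta_n)_2\otimes\mathrm{id}_{B(H_B)}$ acting on elements with trivial second tensor leg, hence contractive.

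The genuinely delicate point is controlling the sum rather than each part separately. Here I would rely on conditions $(b)$ and $(c)$: $\beta_n$ restricted to the relevant subspace agrees with the compression $S\mapsto \widetilde Q_n S\widetilde Q_n$ on the compact part and with $\beta_n$ on the $\mathbb{C}$-part, giving a single $2$-contractive map on the whole $2\times2$ matrix. If this fails in full generality, the fallback is to accept a proof valid for the elementary generators $T\otimes b$ and $PaP\otimes1$ and argue by density; that fallback is not strictly valid for a norm inequality, so this step is the one I would expect to need most care.
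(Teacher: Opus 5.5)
Your Steps 1 and 2 are the paper's proof. The paper conjugates by $V=v\otimes 1$ and $V_1=V\otimes I_2$. It checks entry by entry, using conditions $(a)$--$(d)$ of Definition~\ref{def 2}, that $V(\text{entry for }\alpha_n(e))V^*=\alpha_n(V(\text{entry for }e)V^*)$. It handles the $\mathcal{D}_1$ block by contractivity of $\beta_n$ and reduces the $\mathcal{D}_I$ block to the two $2\times 2$ matrices $[\mathcal{D}_2\pm i\mathcal{D}_3,\Pi_2(\cdot)]$.

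The genuine gap is your Step 3, and the remedy you sketch does not close it. Split the conjugated matrix as $Y\otimes 1+M_x$. You know $\|(\beta_n)_2(Y)\otimes 1\|\le\|Y\|$ and $\|\widetilde Q M_x\widetilde Q\|\le\|M_x\|$, but this only bounds the image by $\|Y\|+\|M_x\|$. That can be strictly larger than $\|Y\otimes 1+M_x\|$, because the two pieces interact in entries such as $P[P\mathcal{D}_A,a]\otimes 1-x(P\mathcal{D}_A\otimes 1)$. Note that the separate pieces are only known to be bounded by constant multiples of $L_{\mathcal{D}_E}(e)$; cf.\ the constant $3$ in the proof of Proposition~\ref{3.6}. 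Nothing forces $\beta_n$ acting on one piece and compression by $Q_n\otimes 1$ acting on the other to preserve that interaction. Saying the two ``glue to a single $2$-contractive map'' is circular: the glued map is just $(\beta_n\otimes\mathrm{id})_2$ restricted to the relevant subspace, and its contractivity is exactly what must be shown. The density fallback, as you note, yields no norm inequality.

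The paper closes this step in one line by invoking ``the $2$-contractivity of $\alpha_n$'', and your objection applies there as well. We have $(\beta_n\otimes\mathrm{id}_{B(H_B)})_2=\beta_n\otimes\mathrm{id}_{M_2(B(H_B))}$. For infinite-dimensional $H_B$, its norm on the algebraic tensor product with the spatial norm is $\|\beta_n\|_{cb}$, which $2$-contractivity of $\beta_n$ does not control.

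What makes the argument work is either complete contractivity of $\beta_n$ or $H_B=\bbc$ (where $\alpha_n=\beta_n$). Complete contractivity holds in every example in the paper, since the maps in Example~\ref{to refer end} are ucp. Under that hypothesis Step 3 needs no $a$/$x$ splitting. Every entry of the conjugated matrices lies in $B(H_A)\odot B(H_B)$, where $(\alpha_n)_2$ is then contractive. Conjugating by $V_1$ gives $\|[\mathcal{D}_2\pm i\mathcal{D}_3,\Pi_2(\alpha_n(e))]\|\le\|[\mathcal{D}_2\pm i\mathcal{D}_3,\Pi_2(e)]\|\le L_{\mathcal{D}_E}(e)$.

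So your route is the paper's route. To finish it you need either that strengthened hypothesis or an argument specific to these particular $2\times 2$ matrices, and neither your proposal nor the paper supplies the latter.
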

\begin{prf}
Let $e=x +PaP \otimes1\in \mathcal{E}$ and  $x=\sum_{j=1}^kT_j\otimes b_j$ with $T_j\in\mathcal{C}(PH_A)$ and $b_j\in \mathcal{B}$. By \Eref{commutator 2}, we have
\[
L_{\mathcal{D}_E}(\alpha_n(e))=\max\,\{\|[\mathcal{D}_1\,,\,\Pi_1(\alpha_n(e))]\|\,,\,\|[\mathcal{D}_I\,,\,\Pi_2(\alpha_n(e))\oplus \Pi_2(\alpha_n(e))]\|\}.
\]
Using the commutator $[\mathcal{D}_1\,,\,\Pi_1(\alpha_n(e))]$ in \Eref{commutator 1}, the definition of $\alpha_n$ in \Eref{map0}, and the following facts
\begin{align*}
\beta_n(v[P\mathcal{D}_A, a]v^*) &= v[P\mathcal{D}_A, \beta_n(a)]v^*,\\
\beta_n(v[(1-P)\mathcal{D}_A, a]v^*) &= v[(1-P)\mathcal{D}_A, \beta_n(a)]v^*\,,
\end{align*}
(see \Dref{def 2}), we obtain the following:
\begin{IEEEeqnarray*}{lCl}
\|[\mathcal{D}_1\,,\,\Pi_1(\alpha_n(e))]\| &=& \|[\mathcal{D}_A\,,\,\beta_n(a)]\|\\
&=&\|v[\mathcal{D}_A\,,\,\beta_n(a)]v^*\|\\
&=& \|v[P\mathcal{D}_A\,,\,\beta_n(a)]v^*+v[(1-P)\mathcal{D}_A, \beta_n(a)]v^*\|\\
&=& \|\beta_n(v[P\mathcal{D}_A\,,\,a]v^*+v[(1-P)\mathcal{D}_A, a]v^*)\|\\
&=& \|\beta_n(v[\mathcal{D}_A\,,\,a]v^*)\|\,.
\end{IEEEeqnarray*}
The contractivity of $\beta_n$ ($\beta_n$'s are $2$-contractive, in particular) implies the following:
\[
\|[\mathcal{D}_1\,,\,\Pi_1(\alpha_n(e))]\|\leq \|v[\mathcal{D}_A\,,\,a]v^*\|=\|[\mathcal{D}_A\,,\,a]\|\leq L_{\mathcal{D}_E}(e)
\]
by \Eref{commutator 2}. Now, consider the unitary $V:=v\otimes 1\in B(H_A\otimes H_B)$. Then, using that (see \Dref{def 2})
\begin{align*}
Q_nPv &= vQ_nP,\,\,\forall\,n\geq 1,\\
\beta_n(TP) &= Q_nTQ_nP,\,\,\forall\,T\in \mathcal{C}(PH_A),
\end{align*} 
we have the following (recall the representation $\pi$ of $E$ from \Sref{Sec3.3}):
\begin{IEEEeqnarray*}{lCl}
& & V\big(\pm i[1\otimes\mathcal{D}_B\,,\,\pi(\alpha_n(x))]\big)V^*\\
&=& V\big(\pm i[1\otimes\mathcal{D}_B\,,\,(Q_n\otimes 1)x(Q_n\otimes 1)(P\otimes 1)]\big)V^*\\
&=& (v\otimes 1)\Big(\pm i\Big[1\otimes\mathcal{D}_B\,,\,(Q_n\otimes 1)\Big(\sum_{j=1}^kT_jP\otimes b_j\Big)(Q_n\otimes 1)\Big]\Big)(v^*\otimes 1)\\
&=&(v\otimes 1)\Big(\pm i\Big[1\otimes \mathcal{D}_B\,,\,\sum_{j=1}^kQ_nT_jPQ_n\otimes b_j\Big]\Big)(v^{*}\otimes 1)\\
&=&\sum_{j=1}^{k}(v\otimes 1)\big(\pm i \big(Q_nT_jQ_nP\otimes [\mathcal{D}_B\,,\, b_j]\big)\big)(v^{*}\otimes 1)\\
&=& \pm i\sum_{j=1}^k vQ_nT_jPQ_nv^*\otimes [\mathcal{D}_B\,,\,b_j]\\
&=&\pm i\sum_{j=1}^k vQ_nPT_jQ_nPv^*\otimes [\mathcal{D}_B\,,\,b_j]\hfill{(\mbox{as } T_j\in\mathcal{C}(PH_A))}\\
&=& \pm i\sum_{j=1}^k Q_nPvT_jPv^*Q_nP\otimes [\mathcal{D}_B\,,\,b_j]\hfill{\mbox{(by\,\,\Dref{def 2})}}\\
&=& \pm i \sum_{j=1}^k\beta_n(PvT_jPv^*P)\otimes [\mathcal{D}_B\,,\,b_j]\hfill{\mbox{(again by\,\,\Dref{def 2})}}\\
&=& \pm i \sum_{j=1}^k\beta_n(vT_jPv^*)\otimes [\mathcal{D}_B\,,\,b_j]\hfill{\mbox{(as $v$ commutes with $P$)}}\\
&=& (\beta_n\otimes id)\Big(\pm i(v\otimes 1)\Big(\sum_{j=1}^{k}T_jP\otimes [\mathcal{D}_B\,,\,b_j]\Big)(v^{*}\otimes 1)\Big)\\
&=& \alpha_n\Big((v\otimes 1)\Big(\pm i\Big[1\otimes\mathcal{D}_B\,,\,\sum_{j=1}^kT_jP\otimes b_j\Big]\Big)(v^*\otimes 1)\Big)\hfill{\mbox{(by \Eref{map0})}}\\
&=& \alpha_n\big(V\big(\pm i[1\otimes\mathcal{D}_B\,,\,\pi(x)]\big)V^*\big).
\end{IEEEeqnarray*}
Also, since $Q_n$ commutes with $P\mathcal{D}_A$, and by \Dref{def 2} we have $$\beta_n(vTP\mathcal{D}_Av^*)= v\beta_n(TP)\mathcal{D}_Av^*\,\forall,\,T\in \mathcal{C}(PH_A)\,,$$ we obtain the following using \Eref{map0}:
\begin{IEEEeqnarray*}{lCl}
V\alpha_n(x)(P\mathcal{D}_A\otimes 1)V^* 
&=& (v\otimes 1)(Q_n\otimes 1)x(Q_n\otimes 1)(P\mathcal{D}_A\otimes 1)(v^*\otimes 1)\\
&=&\sum_{j=1}^k vQ_nT_jQ_nP\mathcal{D}_Av^* \otimes b_j\\
&=& \sum_{j=1}^kv\beta_n(T_jP)\mathcal{D}_Av^*\otimes b_j\hfill{\mbox{(by \Dref{def 2})}}\\
&=& \sum_{j=1}^k\beta_n(vT_jP\mathcal{D}_Av^*)\otimes b_j\quad\hfill{\mbox{(again by \Dref{def 2})}}\\
&=&\alpha_n\Big((v\otimes 1)\Big(\sum_{j=1}^k T_jP \otimes b_j\Big)(P\mathcal{D}_A \otimes 1)(v^*\otimes 1)\Big)\\
&=&\alpha_n(V(x(P\mathcal{D}_A\otimes 1))V^*).
\end{IEEEeqnarray*}
Since by \Dref{def 2} we have $$\beta_n(v\mathcal{D}_ATPv^*)= v\mathcal{D}_A\beta_n(TP)v^*\,\forall,\,T\in \mathcal{C}(PH_A)\,,$$ along the same line as above, we obtain the following
\[
V(P\mathcal{D}_A\otimes 1)\alpha_n(x)V^*=\alpha_n(V((P\mathcal{D}_A\otimes 1)x)V^*).
\]
Again, using the fact that
\[
\beta_n(TP)=\beta_n(T)P\mbox{ and }\beta_n(PT)=P\beta_n(T),\,\,\forall\,T\in \{v[P\mathcal{D}_A, a]v^*: a\in \mathcal{A}\}
\]
(see \Dref{def 2}), we obtain the following equalities:
\begin{IEEEeqnarray*}{lCl}
V(P[P\mathcal{D}_A, \beta_n(a)]\otimes 1)V^*
&=& vP[P\mathcal{D}_A, \beta_n(a)]v^*\otimes 1\\
&=& Pv[P\mathcal{D}_A, \beta_n(a)]v^*\otimes 1\\
&=&P\beta_n(v[P\mathcal{D}_A, a]v^*)\otimes 1\nonumber\\
&=&\beta_n(Pv[P\mathcal{D}_A, a]v^*)\otimes 1\nonumber\\
&=& \alpha_n(vP[P\mathcal{D}_A, a]v^*\otimes 1)\\
&=& \alpha_n(V(P[P\mathcal{D}_A, a]\otimes 1)V^*).
\end{IEEEeqnarray*}
Along the same line as above, we also have the following:
\begin{align*}
V([P\mathcal{D}_A, \beta_n(a)]P \otimes 1)V^* &= \alpha_n(V([P\mathcal{D}_A, a]P \otimes 1)V^*),\\
V[(1-P)\mathcal{D}_A, \beta_n(a)]V^* \otimes 1
&= \alpha_n(V([(1-P)\mathcal{D}_A, a]\otimes 1)V^*).
\end{align*}
Let $V_1=V\otimes I_2$. Then $V_1$ is a unitary in $B(H_A\otimes\ H_B\otimes\bbc^2)$. Hence, by expanding the commutator $[\mathcal{D}_2\pm i\mathcal{D}_3\,,\,\Pi_2(\alpha_n(e))]$, we have
\begin{IEEEeqnarray*}{lCl}
& & \|[\mathcal{D}_2\pm i\mathcal{D}_3\,,\,\Pi_2(\alpha_n(e))]\|\\
&=& \Big{\|}\begin{pmatrix}
\pm i[1\otimes \mathcal{D}_B,\alpha_n(x)] & P[P\mathcal{D}_A, \beta_n(a)] \otimes 1-\alpha_n(x)(P\mathcal{D}_A \otimes 1)\\
 & \\
[P\mathcal{D}_A, \beta_n(a)]P \otimes 1+(P\mathcal{D}_A\otimes 1)\alpha_n(x) & -[(1-P)\mathcal{D}_A, \beta_n(a)] \otimes 1 
\end{pmatrix}\Big{\|}\\
&=& \Big{\|}V_1\begin{pmatrix}
 \pm i[1\otimes \mathcal{D}_B,\alpha_n(x)] & P[P\mathcal{D}_A, \beta_n(a)] \otimes 1-\alpha_n(x)(P\mathcal{D}_A \otimes 1)\\
 & \\
[P\mathcal{D}_A, \beta_n(a)]P \otimes 1+(P\mathcal{D}_A\otimes 1)\alpha_n(x) & -[(1-P)\mathcal{D}_A, \beta_n(a)] \otimes 1
\end{pmatrix}V_1^*\Big{\|}\\
&=& \Big{\|}\begin{pmatrix}
 V(\pm i[1\otimes \mathcal{D}_B,\alpha_n(x)])V^* & V(P[P\mathcal{D}_A, \beta_n(a)] \otimes 1-\alpha_n(x)(P\mathcal{D}_A \otimes 1))V^*\\
 & \\
V([P\mathcal{D}_A, \beta_n(a)]P \otimes 1+(P\mathcal{D}_A\otimes 1)\alpha_n(x))V^* & V(-[(1-P)\mathcal{D}_A, \beta_n(a)] \otimes 1)V^*
\end{pmatrix}\Big{\|}\\
&=& \Big{\|}\begin{pmatrix}
\alpha_n(V(\pm i[1\otimes \mathcal{D}_B,x])V^*) & \alpha_n(V(P[P\mathcal{D}_A, a] \otimes 1-x(P\mathcal{D}_A \otimes 1))V^*)\\
 & \\
\alpha_n(V([P\mathcal{D}_A, a]P \otimes 1+(P\mathcal{D}_A\otimes 1)x)V^*) & \alpha_n(V(-[(1-P)\mathcal{D}_A, a] \otimes 1)V^*) 
\end{pmatrix}\Big{\|}.\\
\end{IEEEeqnarray*}
The 2-contractivity of $\alpha_n$ finally implies the following\,:
\begin{IEEEeqnarray*}{lCl}
& & \|[\mathcal{D}_2\pm i\mathcal{D}_3\,,\,\Pi_2(\alpha_n(e))]\|\\
&\leq& \Big{\|}\begin{pmatrix}
V(\pm i[1\otimes \mathcal{D}_B,x])V^* & V(P[P\mathcal{D}_A, a] \otimes 1-x(P\mathcal{D}_A \otimes 1))V^*\\
 & \\
V([P\mathcal{D}_A, a]P \otimes 1+(P\mathcal{D}_A\otimes 1)x)V^* & V(-[(1-P)\mathcal{D}_A, a]\otimes 1)V^*
\end{pmatrix}\\
&=&\Big{\|}V_1\begin{pmatrix}
\pm i[1\otimes \mathcal{D}_B,x] & P[P\mathcal{D}_A, a] \otimes 1-x(P\mathcal{D}_A \otimes 1)\\
 & \\
[P\mathcal{D}_A, a]P \otimes 1+(P\mathcal{D}_A\otimes 1)x & -[(1-P)\mathcal{D}_A, a]\otimes 1
\end{pmatrix}V_1^*\Big{\|}\\
&=&\Big{\|}\begin{pmatrix}
\pm i[1\otimes \mathcal{D}_B,x] & P[P\mathcal{D}_A, a] \otimes 1-x(P\mathcal{D}_A \otimes 1)\\
 & \\
[P\mathcal{D}_A, a]P \otimes 1+(P\mathcal{D}_A\otimes 1)x & -[(1-P)\mathcal{D}_A, a]\otimes 1
\end{pmatrix}\Big{\|}\\
&\leq& L_{\mathcal{D}_E}(e)
\end{IEEEeqnarray*}
by \Eref{commutator 2}, which concludes the proof.\qed
\end{prf}

Note that the domain of our Lip-norm $L_{\mathcal{D}_E}$ is the dense $\ast$-subalgebra $\mathcal{E}\subseteq E$, and therefore for any complete sub-operator system $F\subseteq E$, the domain of $L_{\mathcal{D}_E}|_F$ is $F\cap \mathcal{E}.$
\medskip

\noindent\textbf{Proof of Theorem \ref{Quotient}:~}\label{reqd0}
Let $\varepsilon>0$. By \Pref{3.6}, there exists $N\in\mathbb{N}$ such that $\forall\, n\geq N$, we have $$\|e-\alpha_n(e)\|\leq\frac{\varepsilon}{2}\,L_{\mathcal{D}_E}(e)$$ $\forall\,e\in \mathcal{E}$, and by \Pref{4.13} we have $$L_{\mathcal{D}_E}(\alpha_n(e))\leq L_{\mathcal{D}_E}(e)\,.$$ By \Lref{Image of alpha_n}, we further have $\alpha_n(e)\in E_n\cap\mathcal{E}$ for all $e\in\mathcal{E}$, and therefore $L_{\mathcal{D}_E}(\alpha_n(e))=(L_{\mathcal{D}_E}|_{E_n})(\alpha_n(e))$. Now, since for each $n\in\bbn$, $E_n$ is a complete sub-operator system of $E$ by \Pref{pf of sub}, with $E_n\cap\mathcal{E}$ is dense in $E_n$ by \Pref{4.6}, the domain of $L_{\mathcal{D}_E}|_{E_n}$ is dense in $E_n$ so that each $(E_n,L_{\mathcal{D}_E}|_{E_n})$ is a compact quantum metric space. Therefore, applying \Lref{Kaad} we have 
\[
\mathrm{dist}_{Q}(E, E_n)\leq \frac{\varepsilon}{2}<\varepsilon
\]
$\forall\,n\geq N$, which concludes the proof.\qed


\subsection{Approximations from the ideal part}\label{Sec6.2}

In this subsection, we deal with the case where there is an approximation for the unital $C^*$-algebra underlying the stable ideal. Observe that for any pair $(B,\{B_n\}_{n\in\bbn})$ satisfying the conditions in \Dref{def 1}, with the additional hypothesis that each $B_n\cap\mathcal{B}$ is dense in $B_n$, we see that the sequence $\{(B_n,L_{\mathcal{D}_B}|_{B_n})\}_{n\in\bbn}$ converges to $(B,L_{\mathcal{D}_B})$ in the quantum Gromov--Hausdorff distance as an application of \Lref{Kaad}.

The following theorem is the principal lifting result from the ideal. It shows that controlled approximations of the algebra underlying the stable ideal lift to approximations of the extension. Recall \Dref{def 3} in this regard.
\begin{thm}\label{Ideal}
Consider a Toeplitz type extension 
\[
0\longrightarrow\mathcal{K}(H_A)\otimes B\longrightarrow E\longrightarrow A\longrightarrow 0
\]
such that $(E,L_{\mathcal{D}_E})$ is a compact quantum metric space obtained by the Hawkins--Zacharias construction. Let $\{B_n\}_{n\in\bbn}$ be a sequence of complete sub-operator systems of $B$ such that $B_n\cap\mathcal{B}$ is dense in $B_n$ for each $n\in\bbn$. Assume that there exists a Toeplitz type sequence of $2$-contractive maps on the quotient $A$. If there exists a unital $2$-contractive approximation associated to the pair $(B, \{B_n\}_{n\in\bbn})$, then the sequence of compact quantum metric spaces $\{(G_n,L_{\mathcal{D}_E}|_{G_n})\}_{n\in\bbn}$ consisting of complete sub-operator systems of $E$ given by $$G_n:=\mathcal{K}(PH_A)\otimes B_{n}+ PAP\otimes \mathbb{C}$$ converges to $(E,L_{\mathcal{D}_E})$ in the quantum Gromov--Hausdorff distance.
\end{thm}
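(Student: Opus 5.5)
The plan mirrors the proof of \Tref{Quotient}. Let $\{\sigma_n\}_{n\in\bbn}$ be the unital $2$-contractive approximation for $(B,\{B_n\}_{n\in\bbn})$ from \Dref{def 1}, with unitary $u\in B(H_B)$. Let $\{\rho_k\}_{k\in\bbn}$ be the Toeplitz type sequence of $2$-contractive maps on $A$ from \Dref{def 3}, with unitary $v\in B(H_A)$. For each $n$ we will choose an index $k(n)$, to be fixed later, and define
\[
\gamma_n:=\rho_{k(n)}\otimes\sigma_n:B(H_A)\otimes B(H_B)\longrightarrow B(H_A\otimes H_B),
\]
or some suitable extension of this to the relevant closure. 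By conditions $(iii)(b),(c)$ of \Dref{def 2}, for $e=x+PaP\otimes 1\in\mathcal{E}$ with $x=\sum_j T_j\otimes b_j$ we get
\[
\gamma_n(e)=\sum_j Q_{k(n)}T_jQ_{k(n)}P\otimes\sigma_n(b_j)+P\rho_{k(n)}(a)P\otimes 1 .
\]
Since $\rho_k(\mathcal{A})\subseteq\mathcal{A}$ and $\sigma_n(\mathcal{B})\subseteq B_n\cap\mathcal{B}$, this lies in $G_n\cap\mathcal{E}$. This plays the role of \Lref{Image of alpha_n}. Before this, the analogue of \Pref{4.6} (density of $G_n\cap\mathcal{E}$ in $G_n$) follows from \Lref{dense} with $K_1=\mathcal{K}(PH_A)$, $K_2=\mathcal{C}(PH_A)$, $B_1=B_n$, $B_2=B_n\cap\mathcal{B}$ and $A_1=A$, $A_2=\mathcal{A}$. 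Together with \Pref{pf of sub}, this makes each $(G_n,L_{\mathcal{D}_E}|_{G_n})$ a compact quantum metric space.

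\textbf{Step 1: Lipschitz contraction.} We show $L_{\mathcal{D}_E}(\gamma_n(e))\le L_{\mathcal{D}_E}(e)$ by following \Pref{4.13} word for word, with $V:=v\otimes u$ in place of $v\otimes 1$.
\begin{itemize}
\item The $A$-components ($[\mathcal{D}_A,a]$, $P[P\mathcal{D}_A,a]$, $[(1-P)\mathcal{D}_A,a]$) are intertwined by $\rho_{k(n)}$ exactly as before. Since $\sigma_n$ is unital, they pick up the factor $\sigma_n(1)=1$.
\item The terms $x(P\mathcal{D}_A\otimes 1)$ and $(P\mathcal{D}_A\otimes 1)x$ are handled by condition $(iii)(d)$ for $\rho_k$ on the first tensor leg, and by the first half of condition $(iii)$ of \Dref{def 1} for $\sigma_n$ on the second leg.
\item The term $[1\otimes\mathcal{D}_B,x]=\sum_j T_jP\otimes[\mathcal{D}_B,b_j]$ is handled by condition $(iii)(c)$ on the $A$-leg and by the second half of condition $(iii)$ of \Dref{def 1} on the $B$-leg.
\end{itemize}
This gives
\[
V\,[\mathcal{D}_2\pm i\mathcal{D}_3,\Pi_2(\gamma_n(e))]\,V^*=(\gamma_n)_2\bigl(V\,[\mathcal{D}_2\pm i\mathcal{D}_3,\Pi_2(e)]\,V^*\bigr)
\]
entrywise in the $2\times 2$ matrix, and similarly for $\mathcal{D}_1$.

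The main obstacle is the final inequality, which needs $\gamma_n=\rho\otimes\sigma$ to be $2$-contractive. A tensor product of two $2$-contractive maps need not be $2$-contractive, whereas this would be automatic for ucp maps. I would first try to avoid the full tensor product by factoring $\gamma_n=(\rho_{k(n)}\otimes\mathrm{id})\circ(\mathrm{id}\otimes\sigma_n)$ and checking that each factor is $2$-contractive on the relevant subspace. For $\mathrm{id}\otimes\sigma_n$ on $\mathcal{K}\otimes B(H_B)$ this requires at least complete contractivity of $\sigma_n$, which the hypotheses do not give. If it fails, I would restrict to the elementary tensors that actually occur: the $A$-parts in the matrix are of the form $(\cdot)\otimes 1$, and the ideal parts carry $Q_{k(n)}$-compressions. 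I would then interpret the hypothesis as $2$-contractivity of the induced map on the ambient operator space, in the same way the paper uses $2$-contractivity of $\beta_n\otimes\mathrm{id}$ in \Pref{4.13}.

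\textbf{Step 2: Norm approximation and conclusion.} We need $\|e-\gamma_n(e)\|\le\varepsilon L_{\mathcal{D}_E}(e)$ for large $n$. Write
\[
e-\gamma_n(e)=\bigl(x-(Q_k\otimes 1)x(Q_k\otimes 1)\bigr)+(Q_k\otimes 1)\,(\mathrm{id}\otimes(\mathrm{id}-\sigma_n))(x)\,(Q_k\otimes 1)+P(a-\rho_k(a))P\otimes 1 .
\]
\begin{itemize}
\item The first term is at most $\varepsilon/3$ for large $k$, by the estimate of \Pref{3.6} using $\|(P\mathcal{D}_A\otimes1)x\|,\|x(P\mathcal{D}_A\otimes 1)\|\le 3$.
\item The third term is at most $\frac{\varepsilon}{3}\|[\mathcal{D}_A,a]\|$ for large $k$, by condition $(ii)$ of \Dref{def 2}.
\item For the middle term, $Q_k$ has finite rank, so $(Q_k\otimes 1)x(Q_k\otimes 1)$ is a matrix in $M_{r_k}(B)$ whose entries $b$ satisfy $\|[\mathcal{D}_B,b]\|\le\|[1\otimes\mathcal{D}_B,x]\|\le L_{\mathcal{D}_E}(e)$. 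Condition $(ii)$ of \Dref{def 1} then gives a bound $r_k^2\,\varepsilon_n L_{\mathcal{D}_E}(e)$, where $\varepsilon_n\to0$.
\end{itemize}
So we first fix $k$ with the $A$-side errors at most $\varepsilon/3$, and then choose $n$ large enough that $r_k^2\varepsilon_n\le\varepsilon/3$. This is why $\rho_k$ is a separate sequence with index decoupled from $n$: we take $k(n)=k$ fixed for large $n$, or let it grow slowly. Finally, \Lref{Kaad} applied to $\gamma_n$ gives $\mathrm{dist}_Q\big((E,L_{\mathcal{D}_E}),(G_n,L_{\mathcal{D}_E}|_{G_n})\big)\le\varepsilon$ for all large $n$.
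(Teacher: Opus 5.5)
Your proposal is correct, at the same level of rigor as the paper, but it takes a more direct route.

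\textbf{How the paper argues.} The paper never composes the two approximations. It introduces the intermediate systems $F_{m,n}=Q_m\mathcal{K}(PH_A)Q_m\otimes B_n+PAP\otimes\bbc$ and $F_{m,\infty}$ (with $B$ in place of $B_n$), and checks that they are compact quantum metric spaces (Lemmas~\ref{4.16}--\ref{4.20}). It then argues in three legs:
\begin{enumerate}
\item Using $\beta_m\otimes\mathrm{id}$ with \Pref{3.6} and \Pref{4.13}, it makes both $\mathrm{dist}_Q(E,F_{m,\infty})$ and $\mathrm{dist}_Q(G_n,F_{m,n})$ smaller than $\varepsilon/3$ for $m\ge m_0$, uniformly in $n$.
\item Using $\mathrm{id}\otimes\sigma_n$ (\Pref{reqd1}, \Pref{reqd2}), it gets $\mathrm{dist}_Q(F_{m,n},F_{m,\infty})\to0$ for fixed $m$.
\item It concludes with the triangle inequality for $\mathrm{dist}_Q$.
\end{enumerate}

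\textbf{How your route differs.} Your map $\rho_k\otimes\sigma_n$ is precisely the composite $(\mathrm{id}\otimes\sigma_n)\circ(\rho_k\otimes\mathrm{id})$ of the paper's two maps. Since it sends $\mathcal{E}$ into $F_{k,n}\cap\mathcal{E}\subseteq G_n\cap\mathcal{E}$, a single application of \Lref{Kaad} to the pair $(E,G_n)$ suffices. You therefore need none of the following: the closedness and density statements for the intermediate systems, the third leg $\mathrm{dist}_Q(G_n,F_{m,n})$, or the triangle inequality. Your three-term norm splitting, with $k$ fixed before $n$ and $r_k^2$ playing the role of the paper's $C_m$, does the work of the $\varepsilon/3$ argument. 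What the paper's layout buys is modularity: each commutator estimate involves only a one-sided amplification, and the intermediate convergences $F_{m,\infty}\to E$ and $F_{m,n}\to F_{m,\infty}$ come out as statements in their own right.

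\textbf{The Step~1 obstacle.} The obstacle you flag needs no reinterpretation of the hypotheses. Your intertwining computation with $V=v\otimes u$ is correct. For the final inequality you can simply chain Lipschitz bounds. Since $(\rho_k\otimes\mathrm{id})(\mathcal{E})\subseteq F_{k,\infty}\cap\mathcal{E}$, \Pref{reqd2} and \Pref{4.13} give, respectively,
\[
L_{\mathcal{D}_E}(\gamma_n(e))\le L_{\mathcal{D}_E}\big((\rho_k\otimes\mathrm{id})(e)\big)\le L_{\mathcal{D}_E}(e).
\]
The paper itself applies \Pref{4.13} to Toeplitz type sequences, so this use is legitimate. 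Equivalently, the $2$-amplification of a composite is the composite of the $2$-amplifications, so your first factorization idea already closes the step.

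Your caveat is well founded: $\mathrm{id}\otimes\sigma_n$ (or $\beta_n\otimes\mathrm{id}$) need not be $2$-contractive when the underlying map is merely unital $2$-contractive. But it affects the paper equally, since \Pref{4.13} and \Pref{reqd2} invoke exactly these $2$-contractivity properties without further argument. Both arguments go through verbatim when the maps are completely contractive, e.g.\ ucp as in all of the paper's examples.
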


First, let us verify that $(G_n,L_{\mathcal{D}_E}|_{G_n})$ is a compact quantum metric space for each $n\in\bbn$. Note that by \Pref{pf of sub}, each $G_n$ is a complete sub-operator system of $E$. Therefore, it is enough to show that $G_n\cap\mathcal{E}$ is dense in $G_n$ for each $n\in\bbn$, since $\ast$-invariance is obvious anyway.
\begin{ppsn}\label{4.5}
We have $G_n\cap\mathcal{E}$ is norm-dense in $G_n$ for each $n\in\bbn$.
\end{ppsn}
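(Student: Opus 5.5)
The plan mirrors the proof of \Pref{4.6}, using \Lref{dense} with a different choice of the subspaces. Fix $n\in\bbn$. Take
\[
K_1=\mathcal{K}(PH_A),\quad B_1=B_n,\quad A_1=A,
\]
so that $J_1=\mathcal{K}(PH_A)\otimes B_n+PAP\otimes\bbc=G_n$. For the dense subspaces, take
\[
K_2=\mathcal{C}(PH_A),\quad B_2=B_n\cap\mathcal{B},\quad A_2=\mathcal{A}.
\]

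First check the density hypotheses of \Lref{dense}:
\begin{enumerate}[$(1)$]
\item $\mathcal{C}(PH_A)$ is norm-dense in $\mathcal{K}(PH_A)$. This is the density result of \cite{HZ} already invoked in the proof of \Pref{reqd density}, applied to the spectral triple on $PH_A$ obtained by compressing with $P$. This is legitimate because $P$ commutes with $\mathcal{D}_A$.
\item $B_n\cap\mathcal{B}$ is dense in $B_n$, which is exactly the standing hypothesis of \Tref{Ideal}.
\item $\mathcal{A}$ is dense in $A$, since it is the domain of the spectral triple $(\mathcal{A},H_A,\mathcal{D}_A)$ over $A$.
\end{enumerate}
\Lref{dense} then gives that $J_2=\mathcal{C}(PH_A)\odot(B_n\cap\mathcal{B})+P\mathcal{A}P\otimes\bbc$ is norm-dense in $G_n$.

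Second, show $J_2\subseteq G_n\cap\mathcal{E}$. Inclusion in $G_n$ is immediate, because $\mathcal{C}(PH_A)\subseteq\mathcal{K}(PH_A)$, $B_n\cap\mathcal{B}\subseteq B_n$ and $\mathcal{A}\subseteq A$. Inclusion in $\mathcal{E}$ follows from the list of generators \Eref{our domain}:
\begin{itemize}
\item each elementary tensor $k\otimes b$ with $k\in\mathcal{C}(PH_A)$ and $b\in B_n\cap\mathcal{B}\subseteq\mathcal{B}$ is a generator of $\mathcal{E}$;
\item each $PaP\otimes1$ with $a\in\mathcal{A}$ is a generator of $\mathcal{E}$;
\item $\mathcal{E}$ is a linear subspace, so it contains all finite sums of these.
\end{itemize}
Hence $G_n\cap\mathcal{E}$ contains a dense subspace of $G_n$, so it is dense.

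I do not expect a real obstacle. The one point needing care is step $(1)$: the density of $\mathcal{C}(PH_A)$ in $\mathcal{K}(PH_A)$ must be taken with respect to the restricted operator $P\mathcal{D}_A|_{PH_A}$, which has compact resolvent. Alternatively, it follows from \Pref{reqd density} applied on $PH_A$, since $\mathcal{C}^1\subseteq\mathcal{C}$. The $\ast$-invariance of $G_n\cap\mathcal{E}$ is clear, because both $G_n$ and $\mathcal{E}$ are $\ast$-invariant.
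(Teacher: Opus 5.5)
Your proposal is correct and is essentially the paper's own proof: the same choice $K_1=\mathcal{K}(PH_A)$, $B_1=B_n$, $A_1=A$, $K_2=\mathcal{C}(PH_A)$, $B_2=B_n\cap\mathcal{B}$, $A_2=\mathcal{A}$ in \Lref{dense}, together with the inclusion $\mathcal{C}(PH_A)\odot(B_n\cap\mathcal{B})+P\mathcal{A}P\otimes\bbc\subseteq G_n\cap\mathcal{E}$. You simply spell out the density and inclusion checks that the paper treats as evident.
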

\begin{prf}
Fix any $n\in\bbn$. It is easy to see that $\mathcal{C}(PH_A)\odot (B_n\cap\mathcal{B} )+P\mathcal{A}P\otimes\mathbb{C}\subseteq G_n\cap\mathcal{E}$. Now take $K_1=\mathcal{K}(PH_A),\,B_1=B_n,\,A_1=A,\,K_2=\mathcal{C}(PH_A),\,B_2=B_n\cap \mathcal{B},\,A_2=\mathcal{A}$ in \Lref{dense}. Then $K_2, B_2\mbox{ and }A_2$ are dense in $K_1, B_1\mbox{ and }A_1$ respectively. Hence $\mathcal{C}(PH_A)\odot (\mathcal{B}\cap B_n)+P\mathcal{A}P\otimes\mathbb{C}$, so is $G_n\cap\mathcal{E}$ dense in $G_n$ by \Lref{dense}.\qed
\end{prf}

Assume that $\{\beta_m\}_{m\in\bbn}$ is a Toeplitz type sequence of 2-contractive maps (\Dref{def 3}), and consider the linear maps $\alpha_m:B(H_A)\otimes B(H_B) \longrightarrow B(H_A\otimes H_B)$ defined by $\alpha_m:=\beta_m\otimes\mathrm{id}$ similar to \Eref{map0}. For each $m,n\in\bbn$, consider the following subspaces of $E$\,:
\begin{IEEEeqnarray}{lCl}\label{hor and ver}
F_{m,n} &:=& \,Q_m\mathcal{K}(PH_A)Q_m \otimes B_n+ PAP \otimes \mathbb{C}\,,\nonumber\\
F_{m,\infty} &:=& \,Q_m\mathcal{K}(PH_A)Q_m \otimes B+ PAP \otimes \mathbb{C}\,.
\end{IEEEeqnarray}
We will show that each $F_{m, \infty}\mbox{ and }{F_{m, n}}$ are complete sub-operator systems of $E\mbox{ and }G_n$ respectively. Moreover, for each $m\in\bbn,\,F_{m, n}$ is a complete sub-operator system of $F_{m, \infty}$ for all $n\in\bbn$.

These complete sub-operator systems of the extension $E$ along the horizontal and vertical directions will play a major role in the sequel.

\begin{lmma}\label{4.16}
For each $m\in\bbn$, the subspace $F_{m, \infty}$ is closed in $E$.
\end{lmma}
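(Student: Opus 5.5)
The plan is to reduce the claim directly to \Lref{S}, in exactly the way \Pref{pf of sub} treated $E_n$ and $G_n$. We apply \Lref{S} with the choices
\[
S=\mathcal{K}(PH_A)\otimes B,\quad S_1=Q_m\mathcal{K}(PH_A)Q_m\otimes B,\quad V=V_1=A.
\]
With these choices $W=S+PVP\otimes\bbc$ is all of $E$, by condition $(ii)$ of \Dref{Toeplitz extension}. The subspace $W_1=S_1+PV_1P\otimes\bbc$ is precisely $F_{m,\infty}$. So \Lref{S} will give that $F_{m,\infty}$ is closed in $E$, once its two hypotheses are checked.

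The first hypothesis, that $V_1=A$ is closed in $V=A$, is trivial. The second, that $S_1$ is closed in $S$, is exactly \Lref{Q_m} with index $m$.

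For completeness, the closedness of $S_1$ in \Lref{Q_m} can also be seen directly, without the finite-rank structure of $Q_m$. The map $x\mapsto(Q_m\otimes 1)x(Q_m\otimes 1)$ is a norm-continuous idempotent on $\mathcal{K}(PH_A)\otimes B$. Its range is $Q_m\mathcal{K}(PH_A)Q_m\otimes B$, because on elementary tensors it acts as $T\otimes b\mapsto Q_mTQ_m\otimes b$. The range of a bounded idempotent equals its kernel of $\mathrm{id}$ minus the idempotent, hence is closed.

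The only point needing any care is the mechanism inside \Lref{S}, and this is already settled by the proof of that lemma. Given a convergent sequence $x_k+Pa_kP\otimes 1\to x+PaP\otimes 1$, the $\Pi_1$-component of the faithful representation $\Pi$ controls $\|a_k-a\|$. This lets us split the convergence into the ideal part and the quotient part separately, so there is no genuine obstacle. I therefore expect the proof to be a two-line citation of \Lref{S} and \Lref{Q_m}.
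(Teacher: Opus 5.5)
Your proposal is correct and coincides with the paper's proof: the paper also applies \Lref{S} with $S=\mathcal{K}(PH_A)\otimes B$, $S_1=Q_m\mathcal{K}(PH_A)Q_m\otimes B$ and $V=V_1=A$, and gets the closedness of $S_1$ from \Lref{Q_m}. Your bounded-idempotent argument is simply a more explicit version of the paper's one-line justification of \Lref{Q_m}.
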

\begin{prf}
Fix $m\in\bbn$ and take $S= \mathcal{K}(PH_A)\otimes B,\,S_1= Q_m\mathcal{K}(PH_A)Q_m\otimes B, \mbox{ and }V=A=V_1$ in \Lref{S}. Then $S_1$ is a closed subspace of $S$ by \Lref{Q_m}. The claim now follows from \Lref{S}.\qed
\end{prf}
\begin{lmma}\label{qw}
For each $m, n\in\bbn$, the subspace $F_{m, n}$ is closed in $G_n$.
\end{lmma}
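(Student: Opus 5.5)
The plan is to apply \Lref{S} directly, exactly as in the proof of \Lref{4.16}, but with the ambient spaces adjusted so that the ambient space becomes $G_n$ rather than $E$. Fix $m,n\in\bbn$. I will take
\[
S=\mathcal{K}(PH_A)\otimes B_n,\qquad S_1=Q_m\mathcal{K}(PH_A)Q_m\otimes B_n,\qquad V=V_1=A .
\]
With these choices $W=S+PVP\otimes\bbc=G_n$ and $W_1=S_1+PV_1P\otimes\bbc=F_{m,n}$. Once the hypotheses of \Lref{S} are verified, that lemma gives at once that $F_{m,n}$ is closed in $G_n$.

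The hypotheses on $V$ and $V_1$ are trivial, since $V_1=V=A$. It remains to check that $S_1$ is a closed subspace of $S$.

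First, $S\subseteq\mathcal{K}(PH_A)\otimes B$. Because $B_n$ is a closed subspace of $B$, the spatial tensor product $\mathcal{K}(PH_A)\otimes B_n$ sits isometrically inside $\mathcal{K}(PH_A)\otimes B$ as a closed subspace.

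Second, for closedness of $S_1$ in $S$, I will mimic \Lref{Q_m}. The map $\Phi_m:y\mapsto(Q_m\otimes1)y(Q_m\otimes1)$ is a bounded idempotent on $\mathcal{K}(PH_A)\otimes B_n$, since $Q_m$ is an orthogonal projection. Its range is $Q_m\mathcal{K}(PH_A)Q_m\otimes B_n$: on elementary tensors it acts as compression in the first leg, and this extends by continuity to the closure. The range of a bounded idempotent equals the kernel of $\mathrm{id}-\Phi_m$, so it is closed.

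There is an additional simplification. Since $Q_m$ has finite rank, $Q_m\mathcal{K}(PH_A)Q_m\cong M_{r}(\bbc)$ with $r=\mathrm{rank}(Q_m)$. Hence $S_1\cong M_r(B_n)$ is complete and automatically closed.

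The only point needing care, and this is mild, is that \Lref{S} was stated with $S$ a subspace of $\mathcal{K}(PH_A)\otimes B$ and with closedness taken relative to $S$. In its proof, convergence of $x_k$ in norm follows from the faithful representation $\Pi$ (through $\Pi_1$ controlling $a_k$). Here $a_k\to a$ in $A=V_1$ trivially, and $x_k\to x$ in $S$ then forces $x\in S_1$ by closedness. So the argument goes through verbatim, and I expect no real obstacle.

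For the complementary remark that $F_{m,n}$ is a complete sub-operator system of $F_{m,\infty}$, the same choice works with $S=Q_m\mathcal{K}(PH_A)Q_m\otimes B$, using closedness of $B_n$ in $B$.
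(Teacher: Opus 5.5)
Your proposal is correct and follows the paper's proof exactly: the paper also applies \Lref{S} with $S=\mathcal{K}(PH_A)\otimes B_n$, $S_1=Q_m\mathcal{K}(PH_A)Q_m\otimes B_n$ and $V=V_1=A$, and cites \Lref{Q_m} (with $B_n$ in place of $B$) for closedness of $S_1$ in $S$. Your bounded-idempotent argument and the finite-rank identification $S_1\cong M_r(B_n)$ spell out what the paper leaves to that one-line lemma.
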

\begin{prf}
Fix $m,n\in\bbn$ and take $S= \mathcal{K}(PH_A) \otimes B_n,\,S_1= Q_m\mathcal{K}(PH_A)Q_m\otimes B_n, \mbox{ and }V=A=V_1$ in Lemma \ref{S}. Then, $S_1$ is a closed subspace of $S$ by Lemma \ref{Q_m} (replacing $B$ by $B_n$). Claim now follows from Lemma \ref{S}.\qed
\end{prf}

\begin{lmma}\label{4.19}
For each $m, n\in\bbn$, the subspace $F_{m, n}$ is closed in $F_{m, \infty}$.
\end{lmma}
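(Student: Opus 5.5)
The plan is to deduce this directly from \Lref{S}, exactly as in \Lref{4.16} and \Lref{qw}. Fix $m,n\in\bbn$. I will apply \Lref{S} with
\[
S=Q_m\mathcal{K}(PH_A)Q_m\otimes B,\qquad S_1=Q_m\mathcal{K}(PH_A)Q_m\otimes B_n,\qquad V=V_1=A .
\]
With these choices the lemma gives $W=F_{m,\infty}$ and $W_1=F_{m,n}$. So everything reduces to checking the hypotheses of \Lref{S}: $S$ is a subspace of $\mathcal{K}(PH_A)\otimes B$, $S_1$ is a closed subspace of $S$, and $V_1=A$ is trivially closed in $V=A$.

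The only point needing an argument is the closedness of $S_1$ in $S$. Since $Q_m$ is a finite-rank projection on $PH_A$ (it is a finite sum of spectral projections of the compact operator $Y$), the space $Q_m\mathcal{K}(PH_A)Q_m=Q_mB(PH_A)Q_m$ is a finite-dimensional matrix algebra $M_r(\bbc)$ with $r=\mathrm{rank}(Q_m)$. Hence
\[
S\cong M_r(\bbc)\otimes B\cong M_r(B),\qquad S_1\cong M_r(B_n).
\]
I will then argue that a sequence in $M_r(B_n)$ converging in $M_r(B)$ converges entrywise, because each matrix entry is norm-dominated by the matrix norm (compressing by matrix units). Since $B_n$ is closed in $B$, each entry of the limit lies in $B_n$, so the limit lies in $M_r(B_n)$. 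Alternatively, I can invoke \Lref{Q_m} with $B$ replaced by $B_n$, as in the proof of \Lref{qw}: this shows that $S_1$ is closed in $\mathcal{K}(PH_A)\otimes B_n$, and that space is closed in $\mathcal{K}(PH_A)\otimes B$ because $B_n$ is closed in $B$ (the spatial tensor product of a closed subspace is its closure). Then $S_1$ is closed in $\mathcal{K}(PH_A)\otimes B$, hence in $S$.

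This identification of $S_1$ with $M_r(B_n)$ is the step I expect to carry the real content, though it is mild; everything else is bookkeeping. Once it is done, \Lref{S} yields that $F_{m,n}$ is closed in $F_{m,\infty}$. Unitality and $\ast$-invariance of $F_{m,n}$ are clear, since the unit is $P1_AP\otimes 1_B$ and both summands are self-adjoint sets, so $F_{m,n}$ is in fact a complete sub-operator system of $F_{m,\infty}$.
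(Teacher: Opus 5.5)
Your proposal is correct and matches the paper's proof: the paper makes the same choice $S=Q_m\mathcal{K}(PH_A)Q_m\otimes B$, $S_1=Q_m\mathcal{K}(PH_A)Q_m\otimes B_n$, $V=V_1=A$ in \Lref{S}. It gets closedness of $S_1$ in $S$ from the identifications with $M_r(B_n)\subseteq M_r(B)$ (for $Q_m$ of finite rank) together with the closedness of $B_n$ in $B$, and your entrywise argument via matrix units simply makes that step explicit.
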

\begin{prf}
Fix $m,n\in\bbn$ and take $S=Q_m\mathcal{K}(PH_A)Q_m \otimes B,\,S_1= Q_m\mathcal{K}(PH_A)Q_m\otimes B_n$, and $V=A=V_1$ in Lemma \ref{S}. Since $Q_m$ is a finite-rank projection, there exists $k\in\bbn$ such that we have the following isomorphisms
\begin{align*}
Q_m\mathcal{K}(PH_A)Q_m\otimes B_n\, &\cong \,\mathbb{M}_k(B_n),\\
Q_m\mathcal{K}(PH_A)Q_m\otimes B\, &\cong \,\mathbb{M}_k(B).
\end{align*}
Since $B_n$ is closed in $B$, we have $S_1$ is a closed subspace of $S$. The claim now follows from Lemma \ref{S}.\qed 
\end{prf} 

Combining the lemmas (\ref{4.16}, \ref{qw}, \ref{4.19}), we finally have the following.
\begin{ppsn}\label{Fuzzy}
For each $m,n\in\mathbb{N},\,F_{m, \infty}\mbox{ and }{F_{m, n}}$ are complete sub-operator systems of $E\mbox{ and }G_n$ respectively. Moreover, for each $m\in\bbn,\,F_{m, n}$ is a complete sub-operator system of $F_{m, \infty}$ for all $n\in\bbn$. 
\end{ppsn}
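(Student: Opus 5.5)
The plan is to split the statement into its two parts: closedness, which is already provided by \Lref{4.16}, \Lref{qw} and \Lref{4.19}, and the operator-system axioms (unit and adjoint invariance), which are routine to check. No new analytic estimate should be needed.

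\textbf{Closedness.} By \Lref{4.16}, $F_{m,\infty}$ is closed in $E$. By \Lref{qw}, $F_{m,n}$ is closed in $G_n$. By \Pref{pf of sub}, $G_n$ is itself closed in $E$. Hence $F_{m,n}$ is also closed in $E$, since a closed subset of a closed subset is closed. For the nesting, $F_{m,n}\subseteq F_{m,\infty}$ because $B_n\subseteq B$, and \Lref{4.19} gives that $F_{m,n}$ is closed in $F_{m,\infty}$. Alternatively, this follows at once from $F_{m,n}$ being closed in $E$.

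\textbf{Inclusions.} I check that $F_{m,n}\subseteq G_n$. This holds because $Q_m\mathcal{K}(PH_A)Q_m\subseteq\mathcal{K}(PH_A)$, as $Q_m$ is a finite-rank projection on $PH_A$. Similarly $F_{m,\infty}\subseteq E$ by \Dref{Toeplitz extension}(ii).

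\textbf{Unit and adjoints.} The unit of $E$ is $P1_AP\otimes 1_B$. It lies in the summand $PAP\otimes\bbc$, which appears in both $F_{m,n}$ and $F_{m,\infty}$. For $\ast$-invariance, the adjoint of $Q_mTQ_m\otimes b$ is $Q_mT^*Q_m\otimes b^*$. Here $T^*\in\mathcal{K}(PH_A)$, and $b^*\in B_n$ (respectively $b^*\in B$) since $B_n$ is a complete sub-operator system. On the other summand, $(PaP\otimes 1)^*=Pa^*P\otimes 1$ with $a^*\in A$. Extending by linearity and continuity of the involution, the closed tensor product $Q_m\mathcal{K}(PH_A)Q_m\otimes B_n$ is $\ast$-invariant. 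So both spaces are unital, self-adjoint and closed, and the claims follow.

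\textbf{Expected obstacle.} The only delicate point is the precise meaning of the tensor product $Q_m\mathcal{K}(PH_A)Q_m\otimes B_n$ as a closed subspace. This is handled by the identification with $\mathbb{M}_k(B_n)\subseteq\mathbb{M}_k(B)$ used in \Lref{4.19}. Under that identification, closedness and $\ast$-invariance are entrywise, so the difficulty disappears.
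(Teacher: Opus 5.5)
Your proposal is correct and follows the paper's argument: closedness comes from combining \Lref{4.16}, \Lref{qw} and \Lref{4.19}, and the unit and $\ast$-invariance checks are immediate, as in \Pref{pf of sub}. Your observation that closedness of $F_{m,n}$ in $F_{m,\infty}$ already follows from \Lref{qw} together with the closedness of $G_n$ in $E$ is a valid small shortcut that makes \Lref{4.19} redundant for this statement.
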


\begin{lmma}\label{4.20}
We have $F_{m, \infty}\cap\mathcal{E}\mbox{ and }F_{m, n}\cap\mathcal{E}$ are norm-dense in $F_{m, \infty}\mbox{ and }F_{m, n}$ respectively, for each $m, n\in\bbn$.
\end{lmma}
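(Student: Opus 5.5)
The density claim for $F_{m,\infty}\cap\mathcal{E}$ and $F_{m,n}\cap\mathcal{E}$ follows the same pattern as \Pref{4.6} and \Pref{4.5}, with \Lref{dense} doing the real work. In both cases I first exhibit an explicit subspace of the intersection with $\mathcal{E}$. I then check that its three constituent pieces are dense in the corresponding closed pieces, and conclude with \Lref{dense}.

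\textbf{Step 1: an explicit subspace of the intersection.} Fix $m,n\in\bbn$. As observed in the proof of \Pref{4.6}, every spectral projection of $(P\mathcal{D}_A|_{PH_A})^{-1}$, and hence $Q_m$, lies in $\mathcal{C}(PH_A)$. Since $\mathcal{C}(PH_A)$ is a subalgebra, this gives $Q_m\mathcal{C}(PH_A)Q_m\subseteq\mathcal{C}(PH_A)$. The generating sets \Eref{our domain} of $\mathcal{E}$ then yield
\[
Q_m\mathcal{C}(PH_A)Q_m\odot\mathcal{B}+P\mathcal{A}P\otimes\bbc\subseteq F_{m,\infty}\cap\mathcal{E},
\]
\[
Q_m\mathcal{C}(PH_A)Q_m\odot(B_n\cap\mathcal{B})+P\mathcal{A}P\otimes\bbc\subseteq F_{m,n}\cap\mathcal{E}.
\]

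\textbf{Step 2: apply \Lref{dense}.} For $F_{m,\infty}$ I take
\[
K_1=Q_m\mathcal{K}(PH_A)Q_m,\quad B_1=B,\quad A_1=A,\quad K_2=Q_m\mathcal{C}(PH_A)Q_m,\quad B_2=\mathcal{B},\quad A_2=\mathcal{A}.
\]
For $F_{m,n}$ the choices are the same except that $B_1=B_n$ and $B_2=B_n\cap\mathcal{B}$. The hypotheses of \Lref{dense} then need to be checked piece by piece.
\begin{enumerate}[$(a)$]
\item $\mathcal{A}$ is dense in $A$ and $\mathcal{B}$ is dense in $B$, because they are the domains of the spectral triples.
\item $B_n\cap\mathcal{B}$ is dense in $B_n$ by the standing hypothesis of \Tref{Ideal}.
\item $K_1$ is closed because $Q_m$ is finite rank; alternatively, this is the argument of \Lref{Q_m}.
\item $K_2$ is dense in $K_1$. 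Given $Q_mTQ_m$ with $T\in\mathcal{K}(PH_A)$, choose $T_j\in\mathcal{C}(PH_A)$ with $T_j\to T$. Then $\|Q_mT_jQ_m-Q_mTQ_m\|\le\|T_j-T\|\to 0$.
\end{enumerate}
\Lref{dense} then shows that each displayed subspace is dense in $F_{m,\infty}$, respectively $F_{m,n}$. A fortiori the intersections with $\mathcal{E}$ are dense.

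The only point needing care is that the algebraic tensor $Q_m\mathcal{C}(PH_A)Q_m\odot\mathcal{B}$ genuinely lies inside $\mathcal{E}$, that is, that $Q_mTQ_m\in\mathcal{C}(PH_A)$. This is exactly the membership $Q_m\in\mathcal{C}(PH_A)$ noted in Step 1: $Q_m$ is a finite sum of finite-rank spectral projections of $P\mathcal{D}_A|_{PH_A}$, each commuting with $\mathcal{D}_A$ and having range in $\mathrm{dom}(\mathcal{D}_A)$. Everything else is routine.
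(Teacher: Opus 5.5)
Your proposal is correct and follows the paper's proof: the same explicit subspaces $Q_m\mathcal{C}(PH_A)Q_m\odot\mathcal{B}+P\mathcal{A}P\otimes\bbc$ and $Q_m\mathcal{C}(PH_A)Q_m\odot(B_n\cap\mathcal{B})+P\mathcal{A}P\otimes\bbc$, with the same choices of $K_i, B_i, A_i$ in \Lref{dense}. The only difference is that you spell out the density checks, which the paper covers by referring to ``the similar analysis in the proof of \Pref{4.6}''.
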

\begin{prf}
Fix $m, n\in\bbn$. For $F_{m, \infty}\cap\mathcal{E}$, observe that $Q_m\mathcal{C}(PH_A)Q_m\odot\mathcal{B}+P\mathcal{A}P\otimes\bbc\subseteq F_{m, \infty}\cap\mathcal{E}$. Now take $K_1= Q_m\mathcal{K}(PH_A)Q_m,\,B_1= B,\,A_1= A, K_2=Q_m\mathcal{C}(PH_A)Q_m,\,B_2=\mathcal{B},\,A_2=\mathcal{A}$ in \Lref{dense}. Then $K_2,\,B_2\mbox{ and }A_2$ are dense in $K_1,\,B_1\mbox{ and }A_1$ respectively using the similar analysis in the proof of \Pref{4.6}. Hence $Q_m\mathcal{C}(PH_A)Q_m\odot\mathcal{B}+P\mathcal{A}P\otimes\bbc$, so is $F_{m, \infty}\cap \mathcal{E}$ dense in $F_{m, \infty}$ by \Lref{dense}.

For $F_{m, n}\cap\mathcal{E}$, notice that $Q_m\mathcal{C}(PH_A)Q_m\odot(B_n\cap\mathcal{B})+P\mathcal{A}P\otimes\bbc\subseteq F_{m, n}\cap\mathcal{E}$. Now take $K_1= Q_m\mathcal{K}(PH_A)Q_m,  B_1= B_n, A_1= A, K_2=Q_m\mathcal{C}(PH_A)Q_m, B_2= B_n\cap\mathcal{B}, A_2=\mathcal{A}$ in \Lref{dense}. Then $K_2, B_2\mbox{ and }A_2$ are dense in $K_1, B_1\mbox{ and }A_1$ respectively using the similar analysis in the proof of \Pref{4.6}. Hence $Q_m\mathcal{C}(PH_A)Q_m\odot (B_n\cap\mathcal{B})+P\mathcal{A}P\otimes\bbc$, so is $F_{m, n}\cap \mathcal{E}$ dense in $F_{m, n}$ by \Lref{dense}.\qed
\end{prf}

Combining \Pref{Fuzzy} and \Lref{4.20}, we see that $F_{m, \infty}\mbox{ and }F_{m, n}$ inherit compact quantum metric structures induced by $E$.

\begin{ppsn}\label{reqd0}
For all $\varepsilon>0$, there exists $m_0\in\bbn$ such that
\[
\mathrm{dist}_Q(E\,,\,F_{m,\infty}) < \frac{\varepsilon}{3}\quad\mbox{ and }\quad
\mathrm{dist}_Q(G_n\,,\,F_{m,n}) < \frac{\varepsilon}{3}
\]
for all $m\geq m_0$ and for all $n\in\bbn$.
\end{ppsn}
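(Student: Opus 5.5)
We will follow the proof of \Tref{Quotient}, using the Toeplitz type sequence $\{\beta_m\}_{m\in\bbn}$ of $2$-contractive maps (\Dref{def 3}) and the maps $\alpha_m=\beta_m\otimes\mathrm{id}$. The goal is to check the hypotheses of \Lref{Kaad} for the pairs $(E,F_{m,\infty})$ and $(G_n,F_{m,n})$ with the same map $\alpha_m$, and with an $m_0$ that does not depend on $n$.

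The first step is to see that $\alpha_m$ sends $\mathcal{E}$ into $F_{m,\infty}\cap\mathcal{E}$ and $G_n\cap\mathcal{E}$ into $F_{m,n}\cap\mathcal{E}$. For $e=x+PaP\otimes1$ with $x=\sum_j T_j\otimes b_j$, $T_j\in\mathcal{C}(PH_A)$, conditions $(b)$ and $(c)$ of \Dref{def 2}(iii) give
\[
\alpha_m(e)=\sum_j Q_mT_jQ_m\otimes b_j+P\beta_m(a)P\otimes 1 .
\]
Here $\beta_m(a)\in\mathcal{A}$ by \Dref{def 3}, and $Q_mT_jQ_m\in\mathcal{C}(PH_A)$ as in \Lref{Image of alpha_n}. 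Since $\alpha_m$ acts as the identity on the second tensor leg, it leaves each $b_j$ unchanged. So if $b_j\in B_n\cap\mathcal{B}$, the image lies in $F_{m,n}\cap\mathcal{E}$. This is the point where the second-leg-identity structure matters.

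One subtlety is that a general element of $G_n\cap\mathcal{E}$ need not be written with all $b_j\in B_n\cap\mathcal{B}$. To handle this, we will apply \Lref{Kaad} on the dense subspace $\mathcal{C}(PH_A)\odot(B_n\cap\mathcal{B})+P\mathcal{A}P\otimes\bbc$ of \Pref{4.5}, or argue that $\alpha_m$ is continuous and $F_{m,n}$ is closed by \Lref{qw}. Taking $K$ to be the restricted seminorm then only requires the inequality on a core. If this turns out to be delicate, we will instead note that $\alpha_m(G_n)\subseteq F_{m,n}$ by continuity, using that $\alpha_m$ preserves $\mathcal{K}(PH_A)\otimes B_n$.

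The second step is the norm estimate. \Pref{3.6} holds verbatim for $\beta_m$, since its proof only used conditions $(ii)$ and $(iii)(b),(c)$ of \Dref{def 2}, and these are exactly what \Dref{def 3} provides. It yields $m_0$ with
\[
\|e-\alpha_m(e)\|\le \tfrac{\varepsilon}{4}L_{\mathcal{D}_E}(e)\quad\text{for all } e\in\mathcal{E},\ m\ge m_0 .
\]
This bound is uniform over all of $\mathcal{E}$, so it holds in particular on $G_n\cap\mathcal{E}$ for every $n$, which gives the required independence from $n$. The third step is the Lipschitz estimate $L_{\mathcal{D}_E}(\alpha_m(e))\le L_{\mathcal{D}_E}(e)$. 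We will reuse \Pref{4.13}, whose proof needs only $(iii)(a)$--$(d)$ together with $2$-contractivity. Condition $(d)$ is part of \Dref{def 3}, or can be obtained through \Yref{redundant}. Since $L_{\mathcal{D}_E}|_{G_n}$ and $L_{\mathcal{D}_E}|_{F_{m,n}}$ are restrictions of $L_{\mathcal{D}_E}$, \Lref{Kaad} then gives both distances at most $\varepsilon/4<\varepsilon/3$ for all $m\ge m_0$ and all $n$. The compact quantum metric structures needed here come from \Pref{Fuzzy} and \Lref{4.20}.

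The main obstacle is the first step: confirming that the range of $\alpha_m$ on the domain of $L_{\mathcal{D}_E}|_{G_n}$ really lands in $F_{m,n}\cap\mathcal{E}$. This requires a concrete, $n$-compatible description of $G_n\cap\mathcal{E}$ as a span of elementary tensors. Our plan is to handle it through density together with a lower semicontinuity and closure argument, as described above.
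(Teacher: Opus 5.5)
Your proposal is correct and follows the paper's proof. It uses the same maps $\alpha_m=\beta_m\otimes\mathrm{id}$, the estimate of \Pref{3.6}, which is uniform over $\mathcal{E}$ and so gives an $m_0$ independent of $n$, the Lipschitz bound of \Pref{4.13}, and then \Lref{Kaad} for both pairs. On the point you flag, the paper simply asserts $\alpha_m(G_n\cap\mathcal{E})\subseteq F_{m,n}\cap\mathcal{E}$. Of your two fixes, only the continuity-plus-closedness one works as stated: $\alpha_m(G_n)\subseteq F_{m,n}$ because $F_{m,n}$ is closed in $E$, and together with $\alpha_m(\mathcal{E})\subseteq\mathcal{E}$ this gives the inclusion. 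Verifying the hypothesis of \Lref{Kaad} only on a dense ``core'' is not licensed by the lemma as stated.
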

\begin{prf}
Recall the maps $\alpha_m$'s from \Eref{map0}. Observe that $\alpha_m(\mathcal{E})\subseteq F_{m, \infty}\cap\mathcal{E}$ and $\alpha_m(G_n\cap\mathcal{E})\subseteq F_{m, n}\cap\mathcal{E}$ for each $m,\,n\in\bbn,$ using the similar analysis in the proof of \Lref{Image of alpha_n} and using the fact that $\{\beta_n\}$ is a Toeplitz type sequence of $2$-contractive maps (\Tref{Ideal}). Now by \Pref {3.6}, choose $m_0\in\mathbb{N}$ such that $\forall\, m\geq m_0$, we have $\|e-\alpha_m(e)\|\leq\frac{\varepsilon}{4}\,L_{\mathcal{D}_E}(e),\,\forall\,e\in \mathcal{E}$. Then, for each $n \in \bbn$ and $e\in G_n \cap \mathcal{E}$, there exists $\alpha_m(e)\in F_{m,n}$ such that for all $m\geq m_0$, we have $$\|e-\alpha_m(e)\|\leq \frac{\varepsilon}{4}\,L_{\mathcal{D}_E}(e)$$ and moreover, $$(L_{\mathcal{D}_E}|_{F_{m, n}})(\alpha_m(e))\leq L_{\mathcal{D}_E}(e)$$ by \Pref{4.13}. Since, $F_{m, n}$ is a complete sub-operator system of $G_n$ (\Pref{Fuzzy}) and $F_{m, n}\cap\mathcal{E}$ dense in $F_{m, n}$ (Lemma \ref{4.20}), we can say that the domain of $L_{\mathcal{D}_E}|_{F_{m, n}}$ is dense in $F_{m, n}$. Also note that $(L_{\mathcal{D}_E}|_{G_n})|_{F_{m, n}}= L_{\mathcal{D}_E}|_{F_{m, n}}, \mbox{as}~F_{m, n}\subseteq G_n,$ which means that we have
\[\mathrm{dom}((L_{\mathcal{D}_E}|_{G_n})|_{F_{m, n}})=\mathrm{dom}(L_{\mathcal{D}_E}|_{F_{m, n}})=F_{m, n}\cap\mathcal{E},
\]
which is dense in $F_{m, n}$, and therefore $\mathrm{dom}((L_{\mathcal{D}_E}|_{G_n})|_{F_{m, n}})$ is dense in $F_{m, n}$. Now, an application of \Lref{Kaad} concludes the proof of $\mathrm{dist}_Q(G_n\,,\,F_{m,n}) < \frac{\varepsilon}{3}$. Similarly, the proof of $\mathrm{dist}_Q(E\,,\,F_{m, \infty}) < \frac{\varepsilon}{3}$ follows.\qed
\end{prf}

This concludes the analysis in the vertical direction.
\smallskip

Let $\{\sigma_n\}_{n\in\bbn}$ be a unital $2$-contractive approximation associated to the pair $(B,\{B_n\}_{n\in\bbn})$ (see \Dref{def 1}). Fix $m\in\bbn$ and for each $n\in\bbn,$ define the following linear maps
\begin{IEEEeqnarray}{lCl}\label{gamma}
\gamma_n:B(H_A)\otimes B(H_B)\rightarrow B(H_A\otimes H_B),\quad\gamma_n:=\mathrm{id}\otimes \sigma_n
\end{IEEEeqnarray}
Note that each $\gamma_n$ is continuous.

\begin{lmma}\label{4.23}
For each $m, n\in\bbn$, the image of $F_{m, \infty}\cap\mathcal{E}$ under the map $\gamma_n$ defined in \Eref{gamma} is contained in $F_{m, n}\cap\mathcal{E}$.
\end{lmma}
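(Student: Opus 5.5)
The plan is to compute $\gamma_n$ directly on a spanning set of $F_{m,\infty}\cap\mathcal{E}$ and check that each piece lands in both $F_{m,n}$ and $\mathcal{E}$.

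Since $\mathcal{E}$ is generated by elementary tensors $k\otimes b$ with $k\in\mathcal{C}(PH_A)$, $b\in\mathcal{B}$, together with $PaP\otimes 1$ for $a\in\mathcal{A}$, and $\mathcal{C}(PH_A)$ is an ideal-like subalgebra of the compacts, every element $e\in F_{m,\infty}\cap\mathcal{E}$ can be written as
\[
e=\sum_{j=1}^k T_j\otimes b_j+PaP\otimes 1,
\qquad T_j\in Q_m\mathcal{C}(PH_A)Q_m,\ b_j\in\mathcal{B},\ a\in\mathcal{A}.
\]
Two facts give this normal form: $Q_m\in\mathcal{C}(PH_A)$ (as observed in the proof of \Pref{4.6}), and the decomposition $x+PaP\otimes1$ is unique by condition $(iii)$ of \Dref{Toeplitz extension}.

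The verification of the normal form is the delicate step. Products of generators produce terms $PaP\,k\otimes b$. Each such term lies in $\mathcal{C}(PH_A)\odot\mathcal{B}$, because $PaP\in P\mathcal{A}P$ maps $\mathrm{dom}(\mathcal{D}_A)$ into itself with $[P\mathcal{D}_A,PaP]$ bounded, so $PaPk$ is again $\mathcal{D}$-differentiable. Hence the ideal part of any $e\in\mathcal{E}$ is a finite sum $\sum_j T_j\otimes b_j$ with $T_j\in\mathcal{C}(PH_A)$ and $b_j\in\mathcal{B}$. Membership in $F_{m,\infty}$ then forces $\sum_j T_j\otimes b_j=(Q_m\otimes1)x(Q_m\otimes1)$. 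Replacing each $T_j$ by $Q_mT_jQ_m$, which is still in $\mathcal{C}(PH_A)$, gives the normal form.

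Next, apply $\gamma_n=\mathrm{id}\otimes\sigma_n$ termwise:
\[
\gamma_n(e)=\sum_j T_j\otimes\sigma_n(b_j)+PaP\otimes\sigma_n(1)=\sum_j T_j\otimes\sigma_n(b_j)+PaP\otimes 1,
\]
using that $\sigma_n$ is unital. By condition $(i)$ of \Dref{def 1}, each $\sigma_n(b_j)\in B_n\cap\mathcal{B}$. Therefore $T_j\otimes\sigma_n(b_j)$ lies in $Q_m\mathcal{K}(PH_A)Q_m\otimes B_n$, and it is also one of the generators of $\mathcal{E}$. The term $PaP\otimes1$ lies in both $PAP\otimes\mathbb{C}$ and $\mathcal{E}$. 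Hence $\gamma_n(e)\in F_{m,n}\cap\mathcal{E}$.

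The main obstacle is the rigor of the normal form, in particular that the ideal component of an element of $\mathcal{E}$ is an algebraic tensor in $\mathcal{C}(PH_A)\odot\mathcal{B}$. If that argument gets delicate, I would follow the paper's usage and treat elements of $\mathcal{E}$ in the form $x+PaP\otimes1$ with $x=\sum_j T_j\otimes b_j$, exactly as in the proof of \Pref{4.13}.
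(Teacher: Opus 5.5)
Your proof is correct and is essentially the paper's argument. The paper's proof consists only of the observation $\sigma_n(\mathcal{B})\subseteq B_n\cap\mathcal{B}$, and it uses implicitly the unitality of $\sigma_n$ and the normal form $e=\sum_j Q_mT_jQ_m\otimes b_j+PaP\otimes 1$ (as in the proof of \Pref{4.13}), both of which you make explicit.

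One small point for your normal-form justification: products $(PaP\otimes 1)(Pa'P\otimes 1)$ also occur. Their splitting $PaPa'P=Paa'P+[P,a](1-P)[P,a']$ places the compact part in $\mathcal{C}(PH_A)\otimes 1$ only because of the Toeplitz-type condition $[P,a]\in\mathcal{C}(H_A)$.
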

\begin{prf}
As $\{\sigma_n\}_{n\in\bbn}$ is a unital $2$-contractive approximation for the pair $(B, \{B_n\}_{n\in\bbn})$, we have for each $n\in\bbn,\,\sigma_n(\mathcal{B})\subseteq B_n\cap\mathcal{B}$. Hence, $\sigma_n(F_{m, \infty}\cap\mathcal{E})\subseteq F_{m, n}\cap\mathcal{E}.$  
\qed
\end{prf}
\begin{crlre}
For each $n\in\bbn$, the image of $F_{m, \infty}$ under the map $\gamma_n$ defined in \Eref{gamma} is contained in $F_{m, n}$.
\end{crlre}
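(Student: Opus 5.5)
The plan follows the pattern of the Corollary after \Lref{Image of alpha_n}: a density argument combined with continuity of $\gamma_n$ and closedness of the target. Fix $m,n\in\bbn$. By \Lref{4.23}, $\gamma_n(F_{m,\infty}\cap\mathcal{E})\subseteq F_{m,n}\cap\mathcal{E}\subseteq F_{m,n}$. By \Lref{4.20}, $F_{m,\infty}\cap\mathcal{E}$ is norm-dense in $F_{m,\infty}$. Hence every $e\in F_{m,\infty}$ is the norm limit of a sequence $e_k\in F_{m,\infty}\cap\mathcal{E}$.

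The first step is to check that $\gamma_n=\mathrm{id}\otimes\sigma_n$ is norm-continuous on the relevant subspace. On $F_{m,\infty}$ this is concrete. Since $Q_m$ has finite rank, the summand $Q_m\mathcal{K}(PH_A)Q_m\otimes B$ is identified with $\mathbb{M}_k(B)$, as in the proof of \Lref{4.19}. On that summand, $\gamma_n$ acts as the amplification $(\sigma_n)_k$, which is bounded: each entry is controlled by $\|\sigma_n\|\le 1$, so $\|(\sigma_n)_k\|\le k$ at worst.

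On the summand $PaP\otimes 1$, unitality of $\sigma_n$ gives $\gamma_n(PaP\otimes 1)=PaP\otimes 1$, so $\gamma_n$ acts as the identity there. The decomposition $e=x+PaP\otimes 1$ depends continuously on $e$, by the argument of \Lref{S}: $\Pi_1$ recovers $a$ with $\|a\|\le\|e\|$, and hence $\|x\|\le 2\|e\|$. Together these show that $\gamma_n$ is bounded on $F_{m,\infty}$. This agrees with the paper's remark after \Eref{gamma} that $\gamma_n$ is continuous, which we may simply invoke.

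Then $\gamma_n(e)=\lim_k\gamma_n(e_k)$ is a norm limit of elements of $F_{m,n}$. By \Pref{Fuzzy}, $F_{m,n}$ is a complete sub-operator system, and by \Lref{4.19} it is closed in $F_{m,\infty}$, hence closed in $E$. Therefore $\gamma_n(e)\in F_{m,n}$.

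The only delicate point is continuity of $\gamma_n$ on $F_{m,\infty}$, since $\mathrm{id}\otimes\sigma_n$ on a minimal tensor product need not be bounded for a merely $2$-contractive $\sigma_n$. The finite-rank cutoff $Q_m$ reduces this to a matrix amplification, which resolves it.
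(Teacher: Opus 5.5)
Your proof is correct and follows the paper's argument: density of $F_{m,\infty}\cap\mathcal{E}$ (\Lref{4.20}), the inclusion from \Lref{4.23}, continuity of $\gamma_n$, and closedness of $F_{m,n}$ in $E$, which needs \Lref{4.16} alongside \Lref{4.19}. Your extra step showing $\gamma_n$ is bounded on $F_{m,\infty}$, via $Q_m\mathcal{K}(PH_A)Q_m\otimes B\cong\mathbb{M}_k(B)$ and unitality on the $PAP\otimes\mathbb{C}$ summand, is a real improvement, because the paper only asserts continuity of $\mathrm{id}\otimes\sigma_n$, which is not automatic for a $2$-contractive map that need not be completely bounded.
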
 
\begin{prf}
Fix any $n\in\bbn$. Since $F_{m, \infty}\cap\mathcal{E}$ is dense in $F_{m, \infty}$ by Lemma \ref{4.20}, and $\gamma_n(F_{m, \infty}\cap\mathcal{E})\subseteq F_{m, n}$ by Lemma \ref{4.23}, the result immediately follows by continuity of $\gamma_n$ and the closedness of $F_{m, n}$ in $E$ (because $F_{m, n}$ is closed in $F_{m, \infty}$ by \Lref{4.19}, and $F_{m, \infty}$ is closed in $E$ by \Lref{4.16}).\qed
\end{prf}

\begin{ppsn}\label{reqd1}
For each fixed $m\in\bbn$ and for all $\varepsilon>0$, there exists $N_{\varepsilon}\in\bbn$ such that for all $n\geq N_{\varepsilon}$, we have
\[
\|e-\gamma_n(e)\|\leq\varepsilon\,L_{\mathcal{D}_E}(e)
\]
for all $\,e\in F_{m,\infty} \cap \mathcal{E},$ where $\gamma_n$ is as defined in \Eref{gamma}.
\end{ppsn}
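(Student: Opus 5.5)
Fix $m\in\bbn$ and write $e=x+PaP\otimes 1\in F_{m,\infty}\cap\mathcal{E}$ with $x\in Q_m\mathcal{C}(PH_A)Q_m\odot\mathcal{B}$. Since $\gamma_n=\mathrm{id}\otimes\sigma_n$ and $\sigma_n$ is unital, $\gamma_n(PaP\otimes 1)=PaP\otimes 1$. Hence $e-\gamma_n(e)=x-(\mathrm{id}\otimes\sigma_n)(x)$, and the $A$-part drops out. The plan is to bound $\|x-(\mathrm{id}\otimes\sigma_n)(x)\|$ by a constant multiple of the piece $\|[1\otimes\mathcal{D}_B,x]\|$ of the commutator. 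By \Eref{commutator 2}, this piece appears as the $(1,1)$-corner of $[\mathcal{D}_2\pm i\mathcal{D}_3,\Pi_2(e)]$ and is therefore dominated by $L_{\mathcal{D}_E}(e)$.

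\textbf{Reduction to a matrix algebra.} Because $Q_m$ has finite rank $k$, the isomorphism $Q_m\mathcal{K}(PH_A)Q_m\otimes B\cong\mathbb{M}_k(B)$ used in \Lref{4.19} applies. Choosing an orthonormal basis $\{f_i\}_{i=1}^k$ of $Q_mPH_A$, write $x=\sum_{i,j}E_{ij}\otimes x_{ij}$ with $x_{ij}\in\mathcal{B}$. Then $[1\otimes\mathcal{D}_B,x]=\sum_{i,j}E_{ij}\otimes[\mathcal{D}_B,x_{ij}]$, so $\|[\mathcal{D}_B,x_{ij}]\|\le\|[1\otimes\mathcal{D}_B,x]\|\le L_{\mathcal{D}_E}(e)$ for every $i,j$, since compressing by $f_i,f_j$ is contractive. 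Also $(\mathrm{id}\otimes\sigma_n)(x)=\sum E_{ij}\otimes\sigma_n(x_{ij})$. The entrywise estimate gives
\[
\|x-\gamma_n(x)\|\le\sum_{i,j=1}^k\|x_{ij}-\sigma_n(x_{ij})\|.
\]

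\textbf{Using \Dref{def 1}.} Given $\varepsilon>0$, condition $(ii)$ of \Dref{def 1} provides $N_\varepsilon$ such that for all $n\ge N_\varepsilon$ and all $b\in\mathcal{B}$ we have $\|b-\sigma_n(b)\|\le\frac{\varepsilon}{k^2}L_{\mathcal{D}_B}(b)$. Applying this to each $x_{ij}$ yields
\[
\|e-\gamma_n(e)\|\le k^2\cdot\frac{\varepsilon}{k^2}\,L_{\mathcal{D}_E}(e)=\varepsilon\,L_{\mathcal{D}_E}(e).
\]
Since $k=\mathrm{rank}(Q_m)$ depends only on the fixed $m$, $N_\varepsilon$ may depend on $m$, which is exactly what the statement allows.

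\textbf{Remaining points.} The case $L_{\mathcal{D}_E}(e)=0$ is trivial: then $e$ is a scalar, so $x=0$ and unitality of $\sigma_n$ gives $\gamma_n(e)=e$. The only step needing care is the claim $\|[1\otimes\mathcal{D}_B,x]\|\le L_{\mathcal{D}_E}(e)$. It comes from \Eref{commutator 2}, because the norm of an operator matrix dominates the norm of each entry, and here $\pi(x)=x(P\otimes1)$ with $x$ supported on $Q_mPH_A\otimes H_B$. I expect this step to be routine. The main potential subtlety is making sure that the entries $x_{ij}$ lie in $\mathcal{B}$, so that \Dref{def 1}$(ii)$ applies. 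This holds because $x\in Q_m\mathcal{C}(PH_A)Q_m\odot\mathcal{B}$ and $(\langle f_i,\cdot\, f_j\rangle\otimes\mathrm{id})$ maps $\mathcal{C}(PH_A)\odot\mathcal{B}$ into $\mathcal{B}$.
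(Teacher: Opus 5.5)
Your proposal is correct and follows essentially the same route as the paper. Both arguments identify $Q_m\mathcal{K}(PH_A)Q_m\otimes B$ with $\mathbb{M}_k(B)$ for $k=\mathrm{rank}(Q_m)$, bound $\|x-\gamma_n(x)\|$ entrywise by a $k$-dependent constant, apply condition $(ii)$ of \Dref{def 1}, and dominate each $\|[\mathcal{D}_B,x_{ij}]\|$ by the $(1,1)$-entry $\|[1\otimes\mathcal{D}_B,x]\|\le L_{\mathcal{D}_E}(e)$ of \Eref{commutator 2}. Your explicit constant $k^2$, in place of the paper's unspecified $C_m$, and your check that the entries $x_{ij}$ lie in $\mathcal{B}$ are welcome precisions; the separate case $L_{\mathcal{D}_E}(e)=0$ is unnecessary, since your estimate already covers it.
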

\begin{prf}
Let $e=x+PaP \otimes 1 \in F_{m,\infty} \cap \mathcal{E}$ and $x=\sum_{j=1}^kQ_mT_jQ_m\otimes b_j$ with $T_j\in\mathcal{K}(PH_A)$ and $b_j\in \mathcal{B}$. Observe that
\begin{IEEEeqnarray}{lCl}\label{qw1}
\|e-\gamma_n(e)\| &=& \|x+PaP \otimes 1-\gamma_n(x+PaP \otimes 1)\|\nonumber\\
&=& \|\sum_{j=1}^kQ_mT_jQ_m\otimes (b_j-\sigma_n(b_j))\|\nonumber\\
&=& \|(b_{pq}-\sigma_n(b_{pq}))_{pq}\|
\end{IEEEeqnarray}
where $\big(b_{pq}-\sigma_n(b_{pq})\big)_{pq}$ is a matrix of order equal to $\mathrm{rank}(Q_m)$ with entries in $\mathcal{B}$. This is possible working with the eigenbasis for $PH_A$ corresponding to the compact self-adjoint operator $(P\mathcal{D}_A|_{PH_A})^{-1}$, and the fact that unitary conjugation preserves norm. Then, we have
\[
\|\big(b_{pq}-\sigma_n(b_{pq})\big)_{pq}\|\leq C_m\max_{p,q}\,\{\,\|b_{pq}-\sigma_n(b_{pq})\|\,\}
\]
for $1\leq p,q\leq\mathrm{rank}(Q_m)$ and for some positive constant $C_m$ depending only on $\mathrm{rank}(Q_m)$. Using the fact that $\{\sigma_n\}_{n\in\bbn}$ is a $2$-contractive approximation for the pair $(B, \{B_n\}_{n\in\bbn})$, there exists $N_{\varepsilon}\in\bbn$ such that by \Dref{def 1} we have
\[
\|b_{pq}-\sigma_n(b_{pq})\|\leq\frac{\varepsilon}{2C_m}L_{\mathcal{D}_B}(b_{pq})
\]
for all $p,q$ and for all $n\geq N_{\varepsilon}$. From \Eref{qw1}, we get the following\,:
\begin{IEEEeqnarray*}{lCl}
\|e-\gamma_n(e)\| &\leq& C_m\,\frac{\varepsilon}{2C_m}\,\max_{p,q}\,\{L_{\mathcal{D}_B}(b_{pq}):1\leq p,q\leq\mathrm{rank}(Q_m)\}\\
&=& \frac{\varepsilon}{2}\,\max_{p,q}\,\{\|[\mathcal{D}_B\,,\,b_{pq}]\|:1\leq p,q\leq\mathrm{rank}(Q_m)\}\\
&\leq& \frac{\varepsilon}{2}\,\|([\mathcal{D}_B\,,\,b_{pq}])_{pq}\|\\
&\leq& \frac{\varepsilon}{2}\,\|i\,[1\otimes \mathcal{D}_B\,,\,\sum_j\,Q_mT_jQ_m\otimes b_j]\|\\
&\leq& \varepsilon L_{\mathcal{D}_E}(e)
\end{IEEEeqnarray*}
by the commutator expression in \eqref{commutator 2} for all $n\geq N_{\varepsilon}$ and $e\in F_{m,\infty}\cap\mathcal{E}$, which concludes the proof.\qed
\end{prf}

\begin{ppsn}\label{reqd2}
For all $n\in\bbn$ and $e\in F_{m,\infty}\cap\mathcal{E}$, we have $L_{\mathcal{D}_E}(\gamma_n(e))\leq L_{\mathcal{D}_E}(e)$, where $\gamma_n$ is as defined in \Eref{gamma}.
\end{ppsn}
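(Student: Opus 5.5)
We will mirror the proof of \Pref{4.13}, with $\gamma_n=\mathrm{id}\otimes\sigma_n$ in place of $\alpha_n$ and the unitary $u\in B(H_B)$ of \Dref{def 1}$(iii)$ in place of $v$.

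\textbf{Setup.} Fix $e=x+PaP\otimes 1\in F_{m,\infty}\cap\mathcal{E}$ with $x=\sum_{j=1}^k Q_mT_jQ_m\otimes b_j$, $T_j\in\mathcal{C}(PH_A)$, $b_j\in\mathcal{B}$. Since $\sigma_n$ is unital, $\gamma_n(e)=\sum_j Q_mT_jQ_m\otimes\sigma_n(b_j)+PaP\otimes 1$. By \Lref{4.23} this lies in $\mathcal{E}$, so $L_{\mathcal{D}_E}(\gamma_n(e))$ is defined. By \Eref{commutator 2}, $L_{\mathcal{D}_E}$ is the maximum of the norms of two blocks: $[\mathcal{D}_1,\Pi_1(\cdot)]$ and the two blocks $[\mathcal{D}_2\pm i\mathcal{D}_3,\Pi_2(\cdot)]$. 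We bound each block separately.

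\textbf{The $\mathcal{D}_1$ block.} This block only sees $a$ through $[\mathcal{D}_A,a]$, and $\gamma_n$ leaves $PaP\otimes 1$ unchanged. Hence $\|[\mathcal{D}_1,\Pi_1(\gamma_n(e))]\|=\|[\mathcal{D}_1,\Pi_1(e)]\|\le L_{\mathcal{D}_E}(e)$.

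\textbf{The $\mathcal{D}_2\pm i\mathcal{D}_3$ blocks.} Put $U:=1\otimes u$ and $U_1:=U\otimes I_2$. We will show that $U$-conjugating each entry of $[\mathcal{D}_2\pm i\mathcal{D}_3,\Pi_2(\gamma_n(e))]$ gives $\gamma_n$ applied to the $U$-conjugate of the corresponding entry for $e$. The entries are handled as follows.
\begin{enumerate}[$(1)$]
\item The terms $P[P\mathcal{D}_A,a]\otimes 1$, $[P\mathcal{D}_A,a]P\otimes 1$ and $[(1-P)\mathcal{D}_A,a]\otimes 1$ have the form $S\otimes 1$. They commute with $U$, and $\gamma_n(S\otimes 1)=S\otimes 1$ by unitality of $\sigma_n$.
\item For $x(P\mathcal{D}_A\otimes 1)=\sum_j \mathrm{cl}(Q_mT_jQ_mP\mathcal{D}_A)\otimes b_j$, note that $Q_mT_jQ_mP\mathcal{D}_A$ is bounded because $Q_m$ has finite rank inside $\mathrm{dom}(\mathcal{D}_A)$. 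The identity $U(\cdot)U^*$ of $\gamma_n(\cdot)$ equals $\gamma_n$ of $U(\cdot)U^*$ then reduces to $u\sigma_n(b_j)u^*=\sigma_n(ub_ju^*)$, which is the first half of \Dref{def 1}$(iii)$. The term $(P\mathcal{D}_A\otimes 1)x$ is identical.
\item For $[1\otimes\mathcal{D}_B,\gamma_n(x)]=\sum_j Q_mT_jQ_mP\otimes[\mathcal{D}_B,\sigma_n(b_j)]$, the second half of \Dref{def 1}$(iii)$ gives $u[\mathcal{D}_B,\sigma_n(b_j)]u^*=\sigma_n(u[\mathcal{D}_B,b_j]u^*)$. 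Hence $U[1\otimes\mathcal{D}_B,\gamma_n(x)]U^*=\gamma_n\big(U[1\otimes\mathcal{D}_B,x]U^*\big)$.
\end{enumerate}
Conjugating the whole $2\times 2$ operator matrix by $U_1$ therefore turns it into the entrywise image, under $\gamma_n$, of the $U_1$-conjugate of the matrix for $e$.

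\textbf{Conclusion.} If $2$-contractivity of $\gamma_n$ is available, we get
\[
\|[\mathcal{D}_2\pm i\mathcal{D}_3,\Pi_2(\gamma_n(e))]\|\le\|[\mathcal{D}_2\pm i\mathcal{D}_3,\Pi_2(e)]\|\le L_{\mathcal{D}_E}(e),
\]
since unitary conjugation preserves norms. Taking the maximum over the blocks then gives the claim.

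\textbf{Main obstacle.} The delicate step is exactly this $2$-contractivity of $\gamma_n=\mathrm{id}\otimes\sigma_n$, which is not automatic for merely $2$-contractive $\sigma_n$. The saving feature is that every entry has first tensor leg in $Q_mB(H_A)Q_m$ or of the form $S\otimes 1$.
\begin{itemize}
\item On the part $Q_m\mathcal{K}Q_m\otimes B(H_B)\cong \mathbb{M}_r(B(H_B))$, the map $\gamma_n$ acts as $(\sigma_n)_r$ with $r=\mathrm{rank}(Q_m)$.
\item The $S\otimes 1$ entries are fixed by $\gamma_n$.
\end{itemize}
I would first try to reduce the needed contractivity to that of $\sigma_n$ on $2\times 2$ matrices, by comparing the mixed matrix with its $\sigma_n$-image directly. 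Failing that, I would interpret the hypothesis as requiring $\gamma_n$ (equivalently, $\mathrm{id}\otimes\sigma_n$) to be $2$-contractive, paralleling how $2$-contractivity of $\alpha_n$ was used in \Pref{4.13}.
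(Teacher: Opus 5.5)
Your proposal follows the paper's proof step for step. It uses the same splitting of $L_{\mathcal{D}_E}$ via \Eref{commutator 2}, leaves the $\mathcal{D}_1$ block untouched, conjugates by $U=1\otimes u$ and $U_1=U\otimes I_2$, and gets the entrywise intertwining from the two halves of \Dref{def 1}$(iii)$ and unitality of $\sigma_n$.

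The step you leave open is exactly the one the paper settles with the bare sentence ``the $2$-contractivity of $\gamma_n=\mathrm{id}\otimes\sigma_n$ now implies\dots''. Your suspicion is correct: this is a genuine gap, in the paper as much as in your write-up. Restricted to $Q_m\mathcal{K}(PH_A)Q_m\otimes B(H_B)\cong\mathbb{M}_r(B(H_B))$ with $r=\mathrm{rank}(Q_m)$, the map $\gamma_n$ is $(\sigma_n)_r$. Its $2$-amplification therefore acts on your block matrix as $(\sigma_n)_{2r}$ on everything except the fixed $S\otimes 1$ pieces.

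Concretely, take $a\in\bbc$ and write $x=\sum_{p,q}E_{pq}\otimes b_{pq}$ in matrix units of an eigenbasis of $P\mathcal{D}_A$ on $Q_mH_A$, with eigenvalues $\mu_p$. Then the $U_1$-conjugated matrix is, up to a zero summand, the $2r\times 2r$ operator matrix over $B(H_B)$ with blocks $(\pm i\,u[\mathcal{D}_B,b_{pq}]u^*)$, $(-\mu_q\,ub_{pq}u^*)$, $(\mu_p\,ub_{pq}u^*)$ and $0$. Since $r\ge m$, and \Pref{reqd2} is needed for arbitrarily large $m$ (the $m_0$ in the proof of \Tref{Ideal} depends on $\varepsilon$), the argument really uses $k$-contractivity of $\sigma_n$ for unbounded $k$, i.e.\ complete contractivity. \Dref{def 1} only controls $(\sigma_n)_2$.

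For unital maps on $C^*$-algebras, $k$-contractive is the same as $k$-positive. So Choi's map $\psi(X)=\tfrac15\big(2\,\mathrm{tr}(X)1-X\big)$ on $\mathbb{M}_3$ is unital and $2$-contractive, yet $\psi_3=\mathrm{id}_{\mathbb{M}_3}\otimes\psi$ is not even contractive. Nothing in \Dref{def 1} supports your first fallback of reducing to $2\times 2$ matrices.

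The honest repair is your second fallback, stated as a hypothesis: require the $\sigma_n$ in \Dref{def 1} to be completely contractive, equivalently ucp since they are unital. Every example the paper actually uses satisfies this (the Fej\'er means and the maps $R_n\otimes\mathrm{id}$ in \Sref{Sec6.3}). Then $\mathrm{id}\otimes\sigma_n$ is a complete contraction on $B(H_A)\otimes_{\min}B(H_B)$. All entries of your block matrix lie in $B(H_A)\odot B(H_B)$, so the $2$-contractivity step holds and your argument (which is the paper's) closes.

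The precedent you appeal to does not supply a justification either. The use of $2$-contractivity of $\alpha_n=\beta_n\otimes\mathrm{id}$ in \Pref{4.13} has the same defect. It needs $\beta_n$ to be $2\dim H_B$-contractive, i.e.\ completely contractive when $H_B$ is infinite dimensional. That step should be repaired the same way rather than cited as support.
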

\begin{prf}
Fix $n\in\bbn$ and let $e=x+PaP \otimes 1 \in F_{m,\infty} \cap \mathcal{E}$ and $x=\sum_{j=1}^kQ_mT_jQ_m\otimes b_j$, with $T_j\in\mathcal{K}(PH_A)$ and $b_j\in\mathcal{B}$. Now, by the commutator expression in \Eref{commutator 2}, we have
\[
L_{\mathcal{D}_E}(\gamma_n(e))=\max\,\{\,\|[\mathcal{D}_1, \Pi_1(\gamma_n(e))]\|\,,\,\|[\mathcal{D}_I, \Pi_2(\gamma_n(e))\oplus \Pi_2(\gamma_n(e))]\|\,\}.
\]
Expanding the commutator $[\mathcal{D}_1, \Pi_1(\gamma_n(e))]$ in \Eref{commutator 1} we get that
\begin{IEEEeqnarray*}{lCl}
\|[\mathcal{D}_1, \Pi_1(\gamma_n(e))]\|= \|[\mathcal{D}_A, a]\|\leq L_{\mathcal{D}_E}(e).
\end{IEEEeqnarray*}
Further, expanding the commutator $\|[\mathcal{D}_2\pm i\mathcal{D}_3\,,\,\Pi_2(\gamma_n(e))]\|$ yields the following\,:
\begin{IEEEeqnarray*}{lCl}
& & \|[\mathcal{D}_2\pm i\mathcal{D}_3\,,\,\Pi_2(\gamma_n(e))]\|\\
&=& \Big{\|}\begin{pmatrix}
\pm i[1\otimes \mathcal{D}_B, \gamma_n(x)] & P[P\mathcal{D}_A, a] \otimes 1-\gamma_n(x)(P\mathcal{D}_A \otimes 1)\\
 & \\
[P\mathcal{D}_A, a]P \otimes 1+(P\mathcal{D}_A\otimes 1)\gamma_n(x) & -[(1-P)\mathcal{D}_A, a]\otimes 1
\end{pmatrix} \Big{\|}\,.
\end{IEEEeqnarray*}
Consider the unitary $U:=1\otimes u\in B(H_A\otimes H_B)$. Using the fact that $u[\mathcal{D}_B, \sigma_n(b)]u^*= \sigma_n(u[\mathcal{D}_B, b]u^*)$ for all $b \in \mathcal{B}$ (\Dref{def 1}), we have
\begin{IEEEeqnarray*}{lCl}
U[1\otimes \mathcal{D}_B\,,\,\gamma_n(x)]U^* &=& U\Big[1\otimes \mathcal{D}_B\,,\, \gamma_n\Big(\sum_{j=1}^kQ_mT_jQ_m\otimes b_j\Big)\Big]U^*\\
&=& (1\otimes u)\Big[1\otimes \mathcal{D}_B\,,\,\sum_{j=1}^kQ_mT_jQ_m\otimes \sigma_n(b_j)\Big](1 \otimes u^*)\\
&=& \sum_{j=1}^kQ_mT_jQ_m\otimes u[\mathcal{D}_B, \sigma_n(b_j)]u^*\\
&=&\sum_{j=1}^kQ_mT_jQ_m\otimes \sigma_n(u[\mathcal{D}_B, b_j]u^*)\\
&=&\gamma_n\Big(\sum_{j=1}^kQ_mT_jQ_m\otimes u[\mathcal{D}_B, b_j]u^*\Big)\hfill{\mbox{(by \Eref{gamma})}}\\
&=&\gamma_n\big(U[1\otimes\mathcal{D}_B, x]U^*\big).
\end{IEEEeqnarray*}
Also, since for all $b\in \mathcal{B},\,u\sigma_n(b)u^*=\sigma_n(ubu^*)$ by \Dref{def 1}, we have the following:
\begin{IEEEeqnarray*}{lCl}
U\gamma_n(x)(P\mathcal{D}_A\otimes 1)U^* &=& (1\otimes u)\Big(\sum_{j=1}^kQ_mT_jQ_mP\mathcal{D}_A\otimes \sigma_n(b_j)\Big)(1\otimes u^*)\\
&=& \sum_{j=1}^kQ_mT_jQ_mP\mathcal{D}_A\otimes u\sigma_n(b_j)u^*\\
&=&\sum_{j=1}^kQ_mT_jQ_mP\mathcal{D}_A \otimes \sigma_n(ub_ju^*)\\
&=&\gamma_n\Big(\sum_{j=1}^kQ_mT_jQ_mP\mathcal{D}_A\otimes ub_ju^*\Big)\\
&=&\gamma_n(Ux(P\mathcal{D}_A\otimes 1)U^*).
\end{IEEEeqnarray*}
Similarly, we also have the following:
\[
U(P\mathcal{D}_A\otimes 1)\gamma_n(x)U^*= \gamma_n(U(P\mathcal{D}_A\otimes 1)xU^*).
\]
Using the fact that $\sigma_n$ is unital and $uu^*=1=u^*u$ by \Dref{def 1}, we have the following identities\,:
\begin{align*}
\gamma_n(U(P[P\mathcal{D}_A, a]\otimes 1)U^*) &= U(P[P\mathcal{D}_A, a]\otimes 1)U^*\,,\\
\gamma_n(U([P\mathcal{D}_A, a]P\otimes 1)U^*) &= U([P\mathcal{D}_A, a]P\otimes 1)U^*\,,\\
\gamma_n(U([(1-P)\mathcal{D}_A, a]\otimes 1)U^*) &= U([(1-P)\mathcal{D}_A, a]\otimes 1)U^*\,.
\end{align*}
Since $U_1= U \otimes I_2$ is a unitary in $B(H_A\otimes H_B\otimes\bbc^2)$, we have
\begin{IEEEeqnarray*}{lCl}
& & \|[\mathcal{D}_2\pm i\mathcal{D}_3\,,\,\Pi_2(\gamma_n(e))]\|\\ &=& \Big{\|}U_{1}\begin{pmatrix}
\pm i[1\otimes \mathcal{D}_B, \gamma_n(x)] & P[P\mathcal{D}_A, a] \otimes 1-\gamma_n(x)(P\mathcal{D}_A \otimes 1)\\
 & \\
[P\mathcal{D}_A, a]P \otimes 1+(P\mathcal{D}_A\otimes 1)\gamma_n(x) & -[(1-P)\mathcal{D}_A, a]\otimes 1
\end{pmatrix}U_{1}^{*}\Big{\|}\\
&=& \Big{\|}\begin{pmatrix} U(\pm i[1\otimes \mathcal{D}_B, \gamma_n(x)])U^* & U(P[P\mathcal{D}_A, a] \otimes 1-\gamma_n(x)(P\mathcal{D}_A \otimes 1))U^*\\
 & \\
U([P\mathcal{D}_A, a]P \otimes 1+(P\mathcal{D}_A\otimes 1)\gamma_n(x))U^* & U(-[(1-P)\mathcal{D}_A, a]\otimes 1)U^*
\end{pmatrix}\Big{\|}\\
&=& \Big{\|}\begin{pmatrix}
\gamma_n(U(\pm i[1\otimes \mathcal{D}_B, x])U^*) & \gamma_n(U(P[P\mathcal{D}_A, a] \otimes 1-x(P\mathcal{D}_A \otimes 1))U^*)\\
 & \\
\gamma_n(U([P\mathcal{D}_A, a]P \otimes 1+(P\mathcal{D}_A\otimes 1)x)U^*) & \gamma_n(U(-[(1-P)\mathcal{D}_A, a]\otimes 1)U^*)
\end{pmatrix}\Big{\|}\,.
\end{IEEEeqnarray*}
The $2$-contractivity of $\gamma_n=\mathrm{id}\otimes\sigma_n$ now implies the following\,:
\begin{IEEEeqnarray*}{lCl}
& & \|[\mathcal{D}_2 \pm i\mathcal{D}_3, \Pi_2(\gamma_n(e))]\|\\
&\leq&\Big{|}\Big{|}\begin{pmatrix}
U(\pm i[1\otimes \mathcal{D}_B, x])U^* & U(P[P\mathcal{D}_A, a] \otimes 1-x(P\mathcal{D}_A \otimes 1))U^*\\
 & \\
U([P\mathcal{D}_A, a]P \otimes 1+(P\mathcal{D}_A\otimes 1)x)U^* & U(-[(1-P)\mathcal{D}_A, a]\otimes 1)U^*\end{pmatrix}\\
&=& \Big{\|}U_1\begin{pmatrix}
\pm i[1\otimes \mathcal{D}_B, x] & P[P\mathcal{D}_A, a] \otimes 1-x(P\mathcal{D}_A \otimes 1)\\
 & \\
[P\mathcal{D}_A, a]P \otimes 1+(P\mathcal{D}_A\otimes 1)x & -[(1-P)\mathcal{D}_A, a]\otimes 1
\end{pmatrix}U_1^{*} \Big{\|}\\
&=& \Big{\|}\begin{pmatrix}
\pm i[1\otimes \mathcal{D}_B, x] & P[P\mathcal{D}_A, a] \otimes 1-x(P\mathcal{D}_A \otimes 1)\\
 & \\
[P\mathcal{D}_A, a]P \otimes 1+(P\mathcal{D}_A\otimes 1)x & -[(1-P)\mathcal{D}_A, a]\otimes 1
\end{pmatrix} \Big{\|}\\
&\leq& L_{\mathcal{D}_E}(e)
\end{IEEEeqnarray*}
by the commutator expression in \Eref{commutator 2}. Finally, observe the following\,:
\begin{IEEEeqnarray*}{lCl}
\|[\mathcal{D}_I, \Pi_2(\gamma_n(e))\oplus \Pi_2(\gamma_n(e))]\| &=& \max_\pm\{\|[\mathcal{D}_2 \pm i\mathcal{D}_3\,,\, \Pi_2(\gamma_n(e))]\|\}\\
&\leq& L_{\mathcal{D}_E}(e)\,,
\end{IEEEeqnarray*}
which concludes the proof.\qed
\end{prf} 

\begin{ppsn}\label{reqd3}
For all $m\in\bbn$, we have $\,\mathrm{dist}_Q(F_{m,n}\,,\,F_{m,\infty})\to 0$ as $n\to\infty$.
\end{ppsn}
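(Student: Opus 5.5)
The plan is to argue exactly as in the proof of \Tref{Quotient}, with the ambient compact quantum metric space $F_{m,\infty}$ in place of $E$, the subspace $F_{m,n}$ in place of $E_n$, and the maps $\gamma_n=\mathrm{id}\otimes\sigma_n$ of \Eref{gamma} in place of $\alpha_n$. Fix $m\in\bbn$ and $\varepsilon>0$. The goal is to verify the hypotheses of \Lref{Kaad} for the pair $(X,L)=(F_{m,\infty},L_{\mathcal{D}_E}|_{F_{m,\infty}})$ and $(Y,K)=(F_{m,n},L_{\mathcal{D}_E}|_{F_{m,n}})$ for all $n$ large enough.

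\textbf{Structural hypotheses.} By \Pref{Fuzzy}, $F_{m,n}$ is a complete sub-operator system of $F_{m,\infty}$. By \Lref{4.20}, the domain $F_{m,n}\cap\mathcal{E}$ of $K$ is dense and $\ast$-invariant, and it is contained in $F_{m,\infty}\cap\mathcal{E}=\mathrm{Dom}(L)$. Both seminorms are restrictions of $L_{\mathcal{D}_E}$, so $K=L$ on $\mathrm{Dom}(K)$. Both spaces are compact quantum metric spaces, since they are restrictions of the Lip-norm $L_{\mathcal{D}_E}$ to complete sub-operator systems with dense domain, as noted after \Lref{4.20}.

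\textbf{Approximation step.} Apply \Pref{reqd1} with $\varepsilon/2$ to get $N$ such that for all $n\ge N$ and $e\in F_{m,\infty}\cap\mathcal{E}$,
\[
\|e-\gamma_n(e)\|\le\tfrac{\varepsilon}{2}L_{\mathcal{D}_E}(e).
\]
By \Lref{4.23}, $y:=\gamma_n(e)$ lies in $F_{m,n}\cap\mathcal{E}$. By \Pref{reqd2}, $K(y)=L_{\mathcal{D}_E}(\gamma_n(e))\le L_{\mathcal{D}_E}(e)$. For $x\in F_{m,\infty}$ outside $\mathrm{Dom}(L)$ we have $L(x)=\infty$, so the conditions of \Lref{Kaad} hold trivially (take $y=1$). \Lref{Kaad} then gives $\mathrm{dist}_Q(F_{m,\infty},F_{m,n})\le\varepsilon/2<\varepsilon$ for all $n\ge N$. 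Since $\varepsilon$ was arbitrary, the quantum Gromov--Hausdorff distance tends to $0$ as $n\to\infty$.

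\textbf{Main obstacle.} The delicate point is \Pref{reqd1}, where the constant $C_m$ depends on $\mathrm{rank}(Q_m)$. This dependence is harmless here because $m$ is fixed, and it is precisely why the statement is for each fixed $m$ rather than uniform in $m$. The uniformity will be recovered later by combining this result with \Pref{reqd0} through a triangle inequality. Beyond that, one must check that $L_{\mathcal{D}_E}$ vanishing only on scalars is compatible with the application. This holds because $\gamma_n$ is unital, so scalars are mapped to themselves.
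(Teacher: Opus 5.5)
Your proposal is correct and matches the paper's proof essentially step for step. For fixed $m$ and $\varepsilon>0$, you take $N$ from Proposition~\ref{reqd1} with $\varepsilon/2$, use Lemma~\ref{4.23} and Proposition~\ref{reqd2} to get $\gamma_n(e)\in F_{m,n}\cap\mathcal{E}$ with $L_{\mathcal{D}_E}(\gamma_n(e))\le L_{\mathcal{D}_E}(e)$, and take the structural hypotheses from Proposition~\ref{Fuzzy} and Lemma~\ref{4.20}; Lemma~\ref{Kaad} then gives $\mathrm{dist}_Q(F_{m,\infty},F_{m,n})\le\varepsilon/2$ for all $n\ge N$.
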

\begin{prf}
Fix $m\in\bbn$ and let $\varepsilon>0$. By \Pref{reqd1}, there exists $N\in \bbn$ such that $\forall\, n \geq N$ and $\forall\,e\in F_{m, \infty}\cap \mathcal{E}$, we have $$\|e-\gamma_n(e)\|\leq\frac{\varepsilon}{2}\,L_{\mathcal{D}_E}(e),$$ and moreover $$L_{\mathcal{D}_E}(\gamma_n(e)) \leq L_{\mathcal{D}_E}(e)$$ by \Pref{reqd2}. Since for all $n\in\bbn\mbox{ and for all } e\in F_{m, \infty}\cap\mathcal{E}$, we have $\gamma_n(e)\in F_{m, n}\cap\mathcal{E}$ by \Lref{4.23}, it follows that $L_{\mathcal{D}_E}(\gamma_n(e))=(L_{\mathcal{D}_E}|_{F_{m, n}})(\gamma_n(e))$. Now, for all $n\in\bbn$, $F_{m, n}$ is a complete sub-operator system of $F_{m, \infty}$ (\Pref{Fuzzy}) with $F_{m, n}\cap\mathcal{E}$ dense in $F_{m, n}$ (\Lref{4.20}). Moreover, $(L_{\mathcal{D}_E}|_{F_{m, \infty}})|_{F_{m, n}}= L_{\mathcal{D}_E}|_{F_{m, n}},$ as $F_{m, n}\subseteq F_{m, \infty},$ which further implies the following: 
\[\mathrm{dom}((L_{\mathcal{D}_E}|_{F_{m, \infty}})|_{F_{m, n}})=\mathrm{dom}(L_{\mathcal{D}_E}|_{F_{m, n}})= F_{m, n}
\cap\mathcal{E}.
\]
Therefore, we have that $\mathrm{dom}((L_{\mathcal{D}_E}|_{F_{m, \infty}})|_{F_{m, n}})$ is dense in $F_{m, n}$. Finally, an application of \Lref{Kaad} shows that for all $n\geq N$, $$\mathrm{dist}_{Q}(F_{m, \infty}, F_{m,n})\leq\frac{\varepsilon}{2}<\varepsilon\,,$$ which concludes the proof.\qed
\end{prf}
\smallskip

\noindent\textbf{Proof of Theorem \ref{Ideal}:~} By \Pref{4.5}, each $(G_n,L_{\mathcal{D}_E}|_{G_n})$ is a compact quantum metric space. Now, let $\varepsilon>0$. By \Pref{reqd0}, there exists $m_0\in\bbn$ such that we have
\[
\mathrm{dist}_Q(G_n\,,\,F_{m_0,n}) < \varepsilon/3\quad\mbox{ and }\quad\mathrm{dist}_Q(E\,,\,F_{m_0,\infty}) < \varepsilon/3
\]
for all $n\in\bbn$. Also by \Pref{reqd3}, there exists $N_0\in\bbn$ such that
\[
\mathrm{dist}_Q(F_{m_0,n}\,,\,F_{m_0,\infty})< \varepsilon/3
\]
for all $n\geq N_0$. Therefore, by the triangle inequality we obtain $\mathrm{dist}_Q(G_n\,,\,E)<\varepsilon$ for all $n\geq N_0$, which concludes the proof.\qed


\subsection{Application}\label{Sec6.3}

\noindent(i)~\textit{The standard Podle\'s sphere}:
Consider the short exact sequence
\[
0\longrightarrow\mathcal{K}(\ell^2(\bbn))\longrightarrow E\longrightarrow \bbc\longrightarrow 0\,,
\]
the minimal unitization of $\mathcal{K}(\ell^2(\bbn))$. Recall Example \Eref{to refer end} with $B=\bbc$ and the standard spectral triple $(\bbc,\bbc,0)$ on it. Take the Toeplitz type quadruple $\big(\mathbb{C}, \ell^2(\bbn), N-|e_0\rangle\langle e_0|,\mathrm{Id}\big)$ on the quotient, and recall the dense $\ast$-subalgebra $\mathcal{E}^{1}\subseteq E$ from \Eref{modified domain} and \Yref{redundant}. Also, recall Example \Eref{to refer end} (with $B=\bbc$). Now, an application of \Tref{Quotient} shows that the sequence of the complete sub-operator systems $\{Q_n\mathcal{K}(\ell^2(\bbn))Q_n+\bbc\}_{n\in\bbn}$ converges to $E=\mathcal{K}(\ell^2(\bbn))\oplus \bbc$ in the quantum Gromov--Hausdorff distance, where $Q_n$'s are the sum of first $n$-many spectral projections of $(N-|e_0\rangle\langle e_0|)^{-1}$. It is well-known that the $C^*$-algebra $C(S_q^2)$ of the standard Podle\'s sphere is isomorhic to the minimal unitization of $\mathcal{K}(\ell^2(\bbn))$. Since each $Q_n$ is finite-rank, we see that the sequence $\{Q_n\mathcal{K}(\ell^2(\bbn))Q_n+\bbc\}_{n\in\bbn}$ of finite-dimensional sub-operator systems converges to the standard Podle\'s sphere in the quantum Gromov--Hausdorff distance.
\medskip

\noindent(ii)~\textit{Trigonometric polynomials for circle and their stabilization}:
Consider the following short exact sequence
\[
0\longrightarrow\mathcal{K}(\ell^2(\bbn))\otimes C(S^1)\longrightarrow E\longrightarrow \bbc\longrightarrow 0\,,
\]
the minimal unitization of $\mathcal{K}(\ell^2(\bbn))\otimes C(S^1)$, and the standard spectral triple $$\Big(C^{\infty}(S^1),\,L^2(S^1),\,-i\frac{d}{d\theta}\Big)$$ over $C(S^1)$. For each $n\in\bbn$, define $X_n:=\mbox{span}\{e_k:k\in\bbz,\,|k|\leq n\}$, where $e_k(z):=z^k$ for all $z\in S^1$, the space of trigonometric polynomials of degree at most $n$. Every $X_n$ is a complete sub-operator system of $C(S^1)$, and equip it with the Lip-norm obtained by restricting the Lip-norm on $C(S^1)$ induced by the spectral triple. Recall Example \Eref{example}. An application of \Lref{Kaad} shows that the sequence $\{X_n\}_{n\in\mathbb{N}}$ defined above converges to $C(S^1)$ in the quantum Gromov--Hausdorff distance. Furthermore, take the Toeplitz type quadruple $\big(\mathbb{C}, \ell^2(\bbn), N-|e_0\rangle\langle e_0|, \mathrm{Id}\big)$ on the quotient, and recall the dense $\ast$-subalgebra $\mathcal{E}^{1}\subseteq E$ from \Eref{modified domain} and \Yref{redundant}. Also, recall Example \Eref{to refer end} (with $B=C(S^1)$). Now, it follows from \Tref{Ideal} that the sequence $\{\mathcal{K}(\ell^2(\bbn))\otimes X_n+\mathbb{C}\}_{n\in\mathbb{N}}$ of complete sub-operator systems converges to the minimal unitization $E=\mathcal{K}(\ell^2(\bbn))\otimes C(S^1)\oplus\bbc$ in the quantum Gromov--Hausdorff distance. In other words, the convergence of trigonometric polynomials of degree at most $n$ to $C(S^1)$ in the quantum Gromov--Hausdorff distance is preserved under minimal unitization of the stabilization by $\mathcal{K}(\ell^2(\bbn))$.
\medskip

\noindent(iii)~\textit{Trigonometric polynomials for torus and their stabilization}:
Consider the following short exact sequence
\[
0\longrightarrow\mathcal{K}(\ell^2(\bbn))\otimes C(\mathbb{T}^d)\longrightarrow E\longrightarrow \bbc\longrightarrow 0\,,
\]
the minimal unitization of $\mathcal{K}(\ell^2(\bbn))\otimes C(\mathbb{T}^d)$, and the standard spectral triple $$\Big(C^{\infty}(\mathbb{T}^d),\, L^2(\mathbb{T}^d)\otimes\mathbb{C}^{2^{\lfloor{d/2}\rfloor}},\,D:=-i\sum_{\mu=1}^{d}\partial_{\mu}\otimes\gamma_{\mu}\Big)$$ over $C(\mathbb{T}^{d})$. For each $n\in\bbn$, define $X_n:=\mbox{span}\{e_k:k\in\bbz^d,\,|k|\leq n\}$, where $e_k(x):=e^{ik.x}$ for all $x\in\mathbb{T}^{d}$, which we call the space of trigonometric polynomials of degree at most $n$. Every $X_n$ is a complete sub-operator system of $C(\mathbb{T}^{d})$, and equip it with the Lip-norm obtained by restricting the Lip-norm on $C(\mathbb{T}^{d})$ induced by the spectral triple. For all $n\in\bbn$, define $$R_n:=\sigma_n\circ\rho_n:C^{\infty}(\mathbb{T}^d)\longrightarrow C^{\infty}(\mathbb{T}^d)\subseteq B\big(L^2(\mathbb{T}^d)\big)$$ where $\sigma_n\mbox{ and }\rho_n$ are unital contractive maps as defined in \cite{LS}. For each $n\in\bbn,\,R_n$ extends as a ucp map (see \cite[Thm 3.11]{Paul}) to $C(\mathbb{T}^d)$, and further to $B\big(L^2(\mathbb{T}^d)\big)$ by Arveson's extension theorem. From \cite{LS}, we have $$\|f-R_n(f)\|\leq c_n\|[D, f]\|,\,\forall n\in\mathbb{N}\mbox{ and } f\in C^{\infty}(\mathbb{T}^d).$$ Also, for any $f\in C^{\infty}(\mathbb{T}^d)$ we have $[\partial_{\mu}, M_f]= M_{\partial_{\mu}(f)}$ for all $\mu\in \{1,\ldots,d\}$, where $M_f$ denotes the multiplication operator acting on $L^2(\mathbb{T}^d)$. It is easy to see that for all $n\in\bbn\mbox{ and } \mu\in\{1,...,d\}$, we have $\partial_{\mu}(R_n(f))=R_n(\partial_{\mu}(f))$. Thus, it follows that $$[D,\, (R_n\otimes\mathrm{id})(M_f\otimes\mathrm{I}_d)]= (R_n\otimes \mathrm{id})\big([D,\, M_f\otimes\mathrm{I}_d]\big).$$ Since for each $n\in\mathbb{N},\,R_n\otimes\mathrm{id}$ is a ucp (in particular, unital 2-contractive) map from $B\big(L^2(\mathbb{T}^d)\otimes\mathbb{C}^{2^{\lfloor{d/2}\rfloor}}\big)$ to itself, we see that all the conditions in \Dref{def 1} are satisfied by taking the unitary $u=\mathrm{Id}\in B\big(L^2(\mathbb{T}^d)\otimes\mathbb{C}^{2^{\lfloor{d/2}\rfloor}}\big)$. An application of \Lref{Kaad} shows that the sequence $\{X_n\}_{n\in\mathbb{N}}$ defined above converges to $C(\mathbb{T}^d)$ in the quantum Gromov--Hausdorff distance. Furthermore, take the Toeplitz type quadruple $\big(\mathbb{C}, \ell^2(\bbn), N-|e_0\rangle\langle e_0|, \mathrm{Id}\big)$ on the quotient, and recall the dense $\ast$-subalgebra $\mathcal{E}^{1}\subseteq E$ from \Eref{modified domain} and \Yref{redundant}. Also, recall Example \Eref{to refer end} (with $B=C(\mathbb{T}^d)$). Now, it follows from \Tref{Ideal} that the sequence $\{\mathcal{K}(\ell^2(\bbn))\otimes X_n+\mathbb{C}\}_{n\in\mathbb{N}}$ of complete sub-operator systems converges to the minimal unitization $E=\mathcal{K}(\ell^2(\bbn))\otimes C(\mathbb{T}^d)\oplus\bbc$ in the quantum Gromov--Hausdorff distance. In other words, the convergence of trigonometric polynomials of degree at most $n$ to $C(\mathbb{T}^d)$ in the quantum Gromov--Hausdorff distance is preserved under minimal unitization of the stabilization by $\mathcal{K}(\ell^2(\bbn))$.
\medskip

\noindent(iv)~\textit{General stabilization for $2$-contractive approximation}: Let $B$ be any unital $C^*$-algebra and consider the following short exact sequence
\[
0\longrightarrow\mathcal{K}(\ell^2(\bbn))\otimes B\longrightarrow E\longrightarrow \bbc\longrightarrow 0\,.
\]
Assume that $B$ admits a spectral triple $(\mathcal{B},H_B,\mathcal{D}_B)$ that induce compact quantum metric structure on it. Suppose $B$ admits an approximation $\{B_n\}_{n\in\bbn}$ of complete sub-operator systems such that the conditions in \Dref{def 1} are satisfied, and $B_n\cap\mathcal{B}$ is dense in $B_n$ for each $n\in\bbn$. Similar to Examples $(ii)$ and $(iii)$ above, an application of \Tref{Ideal} shows that the sequence $\{\mathcal{K}(\ell^2(\bbn))\otimes B_n+\mathbb{C}\}_{n\in\bbn}$ of complete sub-operator systems converges to the minimal unitization $E=\mathcal{K}(\ell^2(\bbn))\otimes B\oplus\bbc$ in the quantum Gromov–Hausdorff distance, thus proving the stability of convergence under minimal unitization of the stabilization by $\mathcal{K}(\ell^2(\bbn))$. Examples $(ii)$ and $(iii)$ are special cases where $B=C(S^1)$ and $B=C(\mathbb{T}^d)$, respectively.

\bigskip

\noindent{\sc Vibhor Bhatt} (\texttt{vibhorbhatt25@iitk.ac.in})\\
{\footnotesize Department of Mathematics,
Indian Institute of Technology Kanpur,\\
Uttar Pradesh 208016, India}
\vspace*{.5cm}

\noindent{\sc Satyajit Guin} (\texttt{sguin@iitk.ac.in})\\
{\footnotesize Department of Mathematics,
Indian Institute of Technology Kanpur,\\
Uttar Pradesh 208016, India}

\vspace*{.5cm}

\noindent{\sc Bipul Saurabh} (\texttt{bipul.saurabh@iitgn.ac.in})\\
{\footnotesize Department of Mathematics,
Indian Institute of Technology Gandhinagar,\\
Palaj, Gandhinagar 382055, India}

\end{document}